\newtheorem{Th}{Theorem}[section]
\newtheorem{Lem}[Th]{Lemma}
\newtheorem{Prop}[Th]{Proposition}
\newtheorem{Cor}[Th]{Corollary}
\newtheorem{Defi}[Th]{Definition}
\newtheorem{Exam}[Th]{Example}
\DeclareMathOperator*{\LIM}{l.i.m.}
\newcommand{\R}{\mathbb{R}}
\newcommand{\C}{\mathbb{C}}
\newcommand{\N}{\mathbb{N}}
\newcommand{\cB}{{\cal B}}
\newcommand{\I}{{\infty}}
\newcommand{\cl}{{\cal L}}
\newcommand{\cfl}{{\cal{FL}}}
\newcommand{\cL}{{\cal L}^2(\R^{\infty})}
\newcommand{\cLP}{{\cal L}_b^2(\R^{\infty})}
\newcommand{\cLF}{{\cal F}{\cal L}^2(\R^{\infty})}
\newcommand{\LL}{\mathbb{L}^2(\R^{\infty})}
\newcommand{\RI}{{\mathbb{R}}^{\infty}}
\newcommand{\RIO}{{\mathbb{R}}_0^{\infty}}
\newcommand{\la}{\left\langle}
\newcommand{\ra}{\right\rangle}
\newcommand{\ssqrt}[1]
{\sqrt{\smash[b]{\mathstrut #1}}}
\begin{document}
\title{{\Large{\bf L\'{e}vy Laplacian for Square Roots of Measures}}}
\author{Hiroaki Kakuma \\
Department of Mathematics,\\Nagoya University, Nagoya 466-8850, Japan,\\
\texttt{d08001p@nagoya-u.ac.jp} }

\date{}
\renewcommand{\baselinestretch}{1.2}

\maketitle

\begin{abstract} 
L. Accardi showed that a Banach space of signed measures is homeomorphic to a Hilbert space 
formed by the so-called square roots of measures.
In this paper, we redefine square roots of measures in view of the theory of measures on infinite dimensional spaces, 
and introduce notions of differentiation, Fourier transform, and convolution product of square roots of measures,
and examine those relations. By using these tools,
we study the L\'{e}vy Laplacian for squares root of measures including non-Gaussian type. 
It is shown that the symbol of the L\'{e}vy Laplacian is equal to the quadratic variation of paths. 
\end{abstract}

\section{\bf Introduction}
\setcounter{equation}{0}

L. Accardi shows that a Banach space of signed measures is homeomorphic to a Hilbert space 
formed by the so-called square roots of measures \cite{Accardi}. 
The notion of the square roots of measures is introduced to interpret the result of 
Segal \cite{Segal} and Nelson \cite{Nelson} in a context more general than that of Gaussian measures.
In this paper we first discuss about square roots of measures on $\RI$: the countable product space of real lines, and
we redefine the square roots of measures in view of the structure of the projective limit.
The advantage of adopting the new definition is to make available a Fourier transform for the square roots of measures.
It is defined as a translation of the concept of the adjoint measure studied by Yamasaki \cite{Yamasaki85}. 
For a more detailed discussion about the relationship between the ordinary definition and our definition of square
 roots of measures, see\cite{Kakuma}.

A typical square root of a measure is a sequence of square roots of 
density functions of finite dimensional projection of a positive bounded Borel measure $\mu$ on $\RI$.
We will denote it by $\sqrt{\mu}$ and call it a square root of $\mu$.
With this notation it follows easily that the inner product of $\sqrt{\mu}$ and $\sqrt{\nu}$ coincide 
with the Hellinger integral 
\[
H(\mu, \nu) = \int_{\RI} \sqrt{\frac{d\mu}{d\lambda}}\sqrt{\frac{d\nu}{d\lambda}} d\lambda 
\]
where $\lambda$ stands for any positive bounded Borel measure on $\RI$ with respect to which both
$\mu$ and $\nu$ are absolutely continuous. 

We also introduce a notion of the directional differentiation for square roots of measures with the aid of the theory of differentiable measures
studied by Averbuh-Smolyanov-Fomin \cite{Averbuh}, Skorohod \cite{Skorohod}, and many other authors.  
The differentiation for square roots of measures inherit the property of differentiation from ordinary differentiable measures.
In suitable conditions, the Fourier transform is compatible with the differentiation defined here in the following sense:  
if $f$ is a differentiable square root of  a measure in the direction $\rho \in \RI$, 
then the Fourier transform of its directional derivative is given by 
$2\pi \sqrt{-1} \displaystyle \la \xi , \rho \ra \times \Hat{f}$. Here $\la \xi , \rho \ra = \displaystyle \sum_{k=1}^{\I} \xi_k \rho_k$ and $\Hat{f}$ stands for 
the Fourier transform of $f$.

The main purpose of the present paper is to study the L\'{e}vy Laplacian for square roots of measures on 
$C_0[0,T]$: the space of real-valued continuous functions on $[0,T]$ which is $0$ at the origin.
The L\'{e}vy Laplacian is defined as the C\`{e}saro mean of second order differential operators: 
\[
\Delta_L = \lim_{n\to \I} \frac{1}{n} \sum_{k=1}^{n} \frac{\partial^2 }{\partial x^2_n } ,
\]
where $x_1,x_2,\cdots $ constitute a coordinate system of the infinite dimensional vector space under consideration.
There are many results about the L\'{e}vy Laplacian associated with the Gaussian measures and Brownian functionals
(\cite{Hida2}, \cite{Kuo2}, \cite{Obata}).
But in this paper we will be concerned with the measures which are not necessarily of Gaussian type and give a sufficient 
condition for square roots of measures on which the L\'{e}vy Laplacian acts naturally. The key is the Fourier transform 
for square roots of measures.
To transform the L\'{e}vy Laplacian into a multiplication operator by a function makes it easy to examine the domain of the  L\'{e}vy Laplacian.
In fact, it is shown that the function is equal to the quadratic variation of paths on $[0,T]$.
This result will be useful for the theory of Sobolev spaces and pseudo-differential operators of square roots of measures.
However these topics exceed the scope of this paper.

\section{\bf Definition and Properties of Square Roots of Measures  }
\setcounter{equation}{0}

Let $\RI$ be countable direct product of real lines
and let $d$ be the distance of $\RI$ defined by
\begin{equation}\label{topology RI}
d(x, y)=\sum_{n=1}^{\I} \frac{1}{2^n} \frac{|x_n - y_n|}{1+|x_n - y_n|}~~(x=\{x_n\}^{\I}_{n=1}, y=\{y_n\}^{\I}_{n=1}\in\RI).
\end{equation}
Then $(\RI,d)$ is a complete separable metric space. Since $\R$ is a nuclear space, so is $\RI$. (see \cite{Bourbaki},
\cite{Schwartz},and \cite{Yamasaki85} for more details).

A sequence of positive $L^2$-functions $\{f_n\},~f_n\in L^2(R^n)$ is called a superprojective system
of $L^2$-functions if    
\begin{equation}\label{weakly L2}
\int_{R}|f_{n+1}(x, x')|^2 dx' \le |f_n(x)|^2~~\mathrm{a.e.}x\in R^n(n=1,2,\cdots).
\end{equation}
The condition \eqref{weakly L2} is called weakly $L^2$-compatibility condition.
If the inequality \eqref{weakly L2} is replaced with an equality, $\{f_n\}$ is called a
projective system of $L^2$-functions and \eqref{weakly L2} is called $L^2$-compatibility condition.
We will denote by $\cLP$ the totality of superprojective systems of $L^2$-functions.
For $\{f_n\}, \{g_n\} \in \cLP$ and $\alpha \in C$ addition and scalar multiplication is defined 
by
\[
\{f_n\}+\{g_n\} = \{f_n + g_n\},  \alpha\{f_n\} = \{\alpha f_n\}
\]
respectively. Then we will denote by $\cL$ the complex linear hull of $\cLP$. 
$\cLP$ is essentially expressed as the sum of four superprojective systems.

\begin{Prop}\label{L2 decomposition}
For all $f \in \cL$ there exist $f^j \in \cLP~(j=1,2,3,4)$ such that 
\begin{equation}\label{L2 dec form}
f=f^1-f^2+\sqrt{-1}(f^3-f^4).
\end{equation}
\end{Prop}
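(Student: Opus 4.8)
The plan is to deduce the decomposition from the observation that $\cLP$ is a convex cone. The key preliminary step — and the only one that requires an estimate — is to check that $\cLP$ is stable under addition. Given $\{f_n\},\{g_n\}\in\cLP$, the sum $\{f_n+g_n\}$ consists of positive functions in $L^2(\R^n)$, and for a.e.\ $x\in\R^n$ one has
\[
\int_\R \big(f_{n+1}(x,x')+g_{n+1}(x,x')\big)^2\,dx' = \int_\R f_{n+1}^2\,dx' + 2\int_\R f_{n+1}g_{n+1}\,dx' + \int_\R g_{n+1}^2\,dx'.
\]
I would bound the two outer integrals by $f_n(x)^2$ and $g_n(x)^2$ using the weakly $L^2$-compatibility condition \eqref{weakly L2}, and the cross term by applying the Cauchy--Schwarz inequality in the variable $x'$ followed by the same condition, which gives $\int_\R f_{n+1}g_{n+1}\,dx' \le \big(\int_\R f_{n+1}^2\,dx'\big)^{1/2}\big(\int_\R g_{n+1}^2\,dx'\big)^{1/2} \le f_n(x)g_n(x)$, the last inequality using $f_n,g_n\ge 0$. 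Adding up yields $(f_n(x)+g_n(x))^2$ on the right, so $\{f_n+g_n\}\in\cLP$. Stability under multiplication by a nonnegative scalar is immediate.

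Granting this, the proposition becomes a matter of collecting terms. Any $f\in\cL$ is a finite combination $f=\sum_{k=1}^m \alpha_k g^{(k)}$ with $\alpha_k\in\C$ and $g^{(k)}\in\cLP$. I would split each coefficient as $\alpha_k=(p_k-q_k)+\sqrt{-1}\,(r_k-s_k)$ with $p_k,q_k,r_k,s_k\ge 0$, set $f^1=\sum_k p_k g^{(k)}$, $f^2=\sum_k q_k g^{(k)}$, $f^3=\sum_k r_k g^{(k)}$, $f^4=\sum_k s_k g^{(k)}$, and observe that each $f^j$ lies in $\cLP$, being a finite sum of nonnegative multiples of elements of $\cLP$. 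By construction $f=f^1-f^2+\sqrt{-1}(f^3-f^4)$, which is \eqref{L2 dec form}.

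The main obstacle is really just the additivity step, and within it the single delicate point is controlling the cross term $\int_\R f_{n+1}g_{n+1}\,dx'$ by $f_n g_n$ rather than by the cruder $f_n^2+g_n^2$; this is exactly what Cauchy--Schwarz together with positivity provides. Note that one cannot instead take $f^1,\dots,f^4$ to be the componentwise positive and negative parts of the real and imaginary parts of $\{f_n\}$, since the positive-part operation need not respect \eqref{weakly L2}; this is why the argument is routed through the linear-combination structure of $\cL$. All remaining points — positivity, membership in $L^2(\R^n)$, and the measurability needed to move between ``$\int_\R(\cdot)^2\,dx'$ as a function of $x$'' and pointwise a.e.\ estimates via Tonelli's theorem — are routine.
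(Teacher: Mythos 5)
Your proposal is correct and follows essentially the same route as the paper: it establishes that $\cLP$ is closed under addition (via Cauchy--Schwarz on the cross term together with positivity) and under nonnegative scalar multiplication, then splits each complex coefficient into positive/negative real and imaginary parts and regroups the finite sum. The paper's proof performs exactly these two steps, merely presenting the coefficient splitting via $(a\vee 0)$, $(-a\vee 0)$ before the cone argument.
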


\begin{proof}
Let $f \in \cL$. By definition,  there are
$\alpha_k \in C$ and $f^k \in \cLP$ $(1\le k \le n)$ such that 
$f= \displaystyle \sum_{k=1}^n \alpha_k f^k$. For $\alpha =a+b\sqrt{-1}, ~a,b \in R$ and $f \in \cLP$, we have
\[
\alpha f = (a \vee 0)f - (-a \vee 0)f + \sqrt{-1}(b \vee 0)f - \sqrt{-1}(-b \vee 0)f ,
\]
where $(a \vee 0)=\max\{a,0\}$. This shows $\alpha f$ is of the form such as \eqref{L2 dec form}. 
Thus there are $g^{(j,k)} \in \cLP~(j=1,2,3,4,~1\le k \le n)$ such that $\alpha_k f^k = g^{(1,k)}-g^{(2,k)} +\sqrt{-1} (g^{(3,k)}-g^{(4,k)})$ 
and $f$ is expressed as
\[
\sum_{k=1}^n g^{(1,k)}- \sum_{k=1}^n g^{(2,k)} +\sqrt{-1} \left (\sum_{k=1}^n g^{(3,k)}-\sum_{k=1}^n g^{(4,k)} \right).
\]
Here $\displaystyle \sum_{k=1}^n g^{(j,k)} \in \cLP~(j=1,2,3,4)$. In fact, 
for $f^1=\{f_n^1\}$ and $f^2=\{f_n^2\} \in \cLP$, we have
\begin{align*}
& \int_{R}|f^1_{n+1}(x,x')+ f^2_{n+1}(x,x')|^2 dx'  \\
&=\int_{R}(|f^1_{n+1}(x,x')|^2+|f^2_{n+1}(x,x')|^2+2f^1_{n+1}(x,x')f^2_{n+1}(x,x'))dx', \\
&\le \int_{R}(|f^1_{n+1}(x,x')|^2+|f^2_{n+1}(x,x')|^2)dx'+ 2\sqrt{\int_{R}|f^1_{n+1}(x,x')|^2dx'}\sqrt{\int_{R}|f^2_{n+1}(x,x')|^2dx'}  \\
&\le (f^1_n(x))^2+(f^2_n(x))^2+2f^1_n(x)f^2_n(x) =|f^1_n(x)+f^2_n(x)|^2 .
\end{align*}
This shows that $f^1 + f^2 \in \cLP$. Thus the finite sum of superprojective systems is also a superprojective system.
\end{proof}

For a topological space $X$ we will denote by $\cB(X)$ the totality of Borel sets of $X$.
Let $f=\{f_n\},g=\{g_n\}\in \cL$ and $n\ge 1$. We consider a sequence of $L^1$- functions $\{h^n_k\}$ defined by  
\begin{equation}\label{product L2}
h^n_k(x)=\int_{R^k}f_{n+k}(x, x')g_{n+k}(x, x')dx' ~~(k=1,2,\cdots).
\end{equation}
We first examine $\{h^n_k\}$ if $f, g \in \cLP$. In this case, \eqref{product L2} is a sequence of non-negative $L^1$-functions such that 
$h^n_{k+1}(x) \le h^n_k(x)(k=1,2,\cdots)$. Hence by the monotone convergence theorem, there exists
$h^n \in L^1(\R^n), h_n\ge 0$ such that $\displaystyle\lim_{k\to\I}\int_{\R^n}|h^n_k(x)-h^n(x)| dx =0$.

In addition $\{h_n\}$ satisfies 
\begin{equation}\label{L1 com}
\int_{\R^n}h_{n+1}(x,x')dx'=h_n(x)~(n=1,2,\cdots).  
\end{equation}
The Kolmogorov extension theorem ensures that there exists a bounded positive Borel measure
$\mu$ on $\RI$ such that  
\begin{equation}\label{L1 proj}
\mu(p_n^{-1}(E))=\int_{E} h_n(x) dx
\end{equation}
for all $E \in\cB(\R^n)$. Here $p_n$ stand for the projection from $\RI$ to $\R^n$
defined by 
\[
p_n:\RI \ni (x_1,x_2,\cdots) \mapsto (x_1,x_2,\cdots,x_n).
\]

When $f,g\in\cL$, $\{h^k_n\}$ converges to some $h_n \in L^1(\R^n)$ because
it is a linear combination of monotone decreasing sequences of
$L^1$-functions. Since $\{h_n\}$ is a complex linear 
combination of the sequences like \eqref{L1 com}, there also exists a complex Borel measure
$\mu$ satisfying \eqref{L1 proj}.    
        
The measure $\mu$ defined above is called the product of $f$ and $g$; it is
denoted by $f\cdot g$. In particular we write $|f|^2=f\cdot \bar{f}$ as the square of $f$.
Here $\bar{f}=\{\bar{f}_n\}$ denotes the complex conjugate of $f$.
In this sense the square of $\cL$ is regarded as a measure and the 
element of $\cL$ a square root of a measure on $\RI$. 

\begin{Defi}
Assume that $\mu$ is a positive Borel measure on $\RI$ such that the finite dimensional
projections $p_n(\mu) = \mu(p^{-1}_n(E)), E \in \cB(\R^n)$ is absolutely continuous with the
Lebesgue measure and let $f_n$ be a density function of $p_n(\mu)$. We call $\{\sqrt{f_n}\} \in \cL$
as the square root of $\mu$ and denote it by $\sqrt{\mu}$
\end{Defi}

\begin{Prop}
Fix $f,g \in \cL$ and $\alpha \in\C$. Then the following $(1)-(3)$ hold.
\begin{enumerate}
\item $f\cdot g=g\cdot f$
\item $(f+g)\cdot h=f\cdot h+g\cdot h$
\item $(\alpha f)\cdot g=\alpha (f\cdot g)$
\end{enumerate}
\end{Prop}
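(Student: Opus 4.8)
The plan is to reduce each identity to the corresponding elementary identity for the integrands of \eqref{product L2}, pass to the $L^1$-limit defining the density in \eqref{L1 proj}, and then lift the conclusion to the measures by uniqueness of the Kolmogorov extension. Fix $f=\{f_n\}$, $g=\{g_n\}$, $h=\{h_n\}$ in $\cL$; since $\cL$ is a complex linear space, $f+g$ and $\alpha f$ again lie in $\cL$, so every product occurring in the statement is defined, and for each ordered pair the sequence $\{h^n_k\}_k$ defined by \eqref{product L2} converges in $L^1(\R^n)$, as recalled just before the definition of the product. Write $\varrho_n[\phi,\psi]\in L^1(\R^n)$ for that limit attached to the pair $(\phi,\psi)$; since $\{h^n_k\}_k$ is determined directly by the sequences $\phi$ and $\psi$ (only its convergence being proved through a representation of them), $\varrho_n[\phi,\psi]$ is well defined, and it is precisely the density of $\phi\cdot\psi$ in \eqref{L1 proj}.

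Next I would verify bilinearity already at the level of the integrands. For fixed $n,k$ one has, pointwise and then after integrating over $\R^k$, the identities $f_{n+k}g_{n+k}=g_{n+k}f_{n+k}$,
\[
\int_{\R^k}(f_{n+k}+g_{n+k})\,h_{n+k}\,dx' = \int_{\R^k}f_{n+k}\,h_{n+k}\,dx' + \int_{\R^k}g_{n+k}\,h_{n+k}\,dx',
\]
and $(\alpha f_{n+k})g_{n+k}=\alpha(f_{n+k}g_{n+k})$; that is, the map $(\phi,\psi)\mapsto h^n_k$ is symmetric, additive in its first argument, and homogeneous of degree one in its first argument. Because finite sums and scalar multiples of $L^1$-convergent sequences converge to the corresponding sum and multiple, these properties survive the limit $k\to\I$: $\varrho_n[f,g]=\varrho_n[g,f]$, $\varrho_n[f+g,h]=\varrho_n[f,h]+\varrho_n[g,h]$, and $\varrho_n[\alpha f,g]=\alpha\,\varrho_n[f,g]$, each holding a.e. on $\R^n$ and simultaneously for all $n$.

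Finally I would pass from densities to measures. If two complex Borel measures on $\RI$ have, for every $n$, the same Lebesgue density for the marginal $p_n(\cdot)$ in the sense of \eqref{L1 proj}, then they agree on every cylinder set $p_n^{-1}(E)$, $E\in\cB(\R^n)$; the cylinder sets form a $\pi$-system generating $\cB(\RI)$, and a finite complex measure is determined by its restriction to such a $\pi$-system (decompose into real and imaginary parts, use the Hahn--Jordan decomposition and Dynkin's lemma, or simply invoke the uniqueness part of the Kolmogorov extension theorem). Combined with the three density identities of the previous paragraph this yields $f\cdot g=g\cdot f$, $(f+g)\cdot h=f\cdot h+g\cdot h$, and $(\alpha f)\cdot g=\alpha(f\cdot g)$. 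The only step worth spelling out rather than treating as routine bookkeeping is the commutation of the limit $\{h^n_k\}_k\to\varrho_n$ with these finite linear operations — needed precisely because $\varrho_n$ is obtained only as a limit and not from a closed formula — but it is immediate from linearity and continuity of the $L^1$-norm, so I do not expect any genuine obstacle.
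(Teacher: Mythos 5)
The paper states this proposition without any proof, treating it as immediate from the definition of the product, and your argument is correct: it supplies exactly the routine verification that the paper leaves implicit (bilinearity and symmetry of the finite-dimensional approximants $h^n_k$, stability of these identities under the $L^1(\R^n)$-limit, and identification of the resulting measures via their densities on cylinder sets, which form a generating $\pi$-system). Your remark that $h^n_k$ is defined directly from the pair of sequences, with the decomposition into elements of $\cLP$ used only to prove convergence, is the right way to see that the limit densities, and hence the product measure, are well defined independently of the decomposition.
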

 
\begin{Prop}\label{Sc eq}
Let $f,g \in \cL$. 
\begin{equation}\label{Schwarz}
|f\cdot g|(E) \le\sqrt{|f|^2(E)}\sqrt{|g|^2(E)}
\end{equation}
holds for any $E\in\cB(\RI)$. Here $|f \cdot g|$ denotes the total variation of complex Borel measures $f\cdot g$. 
\end{Prop}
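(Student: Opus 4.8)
\textit{Proof proposal.} The plan is to reduce \eqref{Schwarz} to a statement about finite-dimensional projections and to apply the Cauchy--Schwarz inequality twice --- once along the ``hidden'' coordinates $x'\in\R^k$ and once along the ``visible'' coordinates $x\in\R^n$ --- and then to upgrade the resulting estimate on cylinder sets to arbitrary Borel sets by passing to Radon--Nikodym densities. Write $f=\{f_n\}$, $g=\{g_n\}$; let $h_n\in L^1(\R^n)$ be the density of $p_n(f\cdot g)$, and let $F_n,G_n\in L^1(\R^n)$ be the densities of $p_n(|f|^2)$ and $p_n(|g|^2)$. Thus $h_n$ is the $L^1(\R^n)$-limit of the functions $h^n_k$ of \eqref{product L2}, while $F_n(x),G_n(x)$ are the pointwise a.e.\ limits of $\int_{\R^k}|f_{n+k}(x,x')|^2\,dx'$ and $\int_{\R^k}|g_{n+k}(x,x')|^2\,dx'$ --- these last sequences converge because, after expressing $f$ and $g$ as finite linear combinations of elements of $\cLP$, they become finite linear combinations of the monotone decreasing sequences used in the construction of the product, and their $L^1$-limits necessarily coincide with $F_n,G_n$. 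First I would establish the pointwise bound $|h_n(x)|\le\sqrt{F_n(x)}\sqrt{G_n(x)}$ a.e.\ on $\R^n$: the Cauchy--Schwarz inequality in $L^2(\R^k)$ gives $|h^n_k(x)|\le\big(\int_{\R^k}|f_{n+k}(x,x')|^2dx'\big)^{1/2}\big(\int_{\R^k}|g_{n+k}(x,x')|^2dx'\big)^{1/2}$ for a.e.\ $x$, and it remains to pass to the limit along a subsequence for which $h^n_{k_j}(x)\to h_n(x)$ a.e.

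Next I would prove \eqref{Schwarz} for cylinder sets $E=p_n^{-1}(B)$, $B\in\cB(\R^n)$. Since $(f\cdot g)(p_n^{-1}(B))=\int_B h_n\,dx$, the total variation of $f\cdot g$ restricted to the cylinder $\sigma$-algebra $\mathcal F_n:=p_n^{-1}(\cB(\R^n))$ assigns $\int_B|h_n|\,dx$ to $p_n^{-1}(B)$, and combining the pointwise bound with the Cauchy--Schwarz inequality for $\sqrt{F_n},\sqrt{G_n}\in L^2(B,dx)$ gives
\[
\int_B|h_n|\,dx\ \le\ \int_B\sqrt{F_nG_n}\,dx\ \le\ \Big(\int_B F_n\,dx\Big)^{1/2}\Big(\int_B G_n\,dx\Big)^{1/2}\ =\ \sqrt{|f|^2(E)}\,\sqrt{|g|^2(E)} .
\]
Because the $\mathcal F_n$ increase and $\bigcup_n\mathcal F_n$ is an algebra generating $\cB(\RI)$, the total variation of $(f\cdot g)|_{\mathcal F_n}$ at a fixed cylinder $E$ increases to $|f\cdot g|(E)$ as $n\to\I$ (a standard approximation of Borel partitions of $E$ by partitions into algebra sets, using that $|f\cdot g|$ is finite); letting $n\to\I$ therefore gives $|f\cdot g|(E)\le\sqrt{|f|^2(E)}\sqrt{|g|^2(E)}$ for every cylinder set $E$.

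Finally, I would remove the restriction to cylinder sets, which is the main obstacle: the set function $E\mapsto\sqrt{|f|^2(E)}\sqrt{|g|^2(E)}$ is not additive, so a monotone-class argument cannot be applied directly, and one must work with densities. Put $\lambda:=|f|^2+|g|^2+|f\cdot g|$, a finite positive Borel measure on $\RI$, and write $|f\cdot g|=\theta\lambda$, $|f|^2=\phi\lambda$, $|g|^2=\psi\lambda$ with $\theta,\phi,\psi\ge0$ in $L^1(\lambda)$. The cylinder estimate reads $\int_E\theta\,d\lambda\le(\int_E\phi\,d\lambda)^{1/2}(\int_E\psi\,d\lambda)^{1/2}$ for all $E\in\bigcup_n\mathcal F_n$; applying $\sqrt{ab}\le\tfrac12(ta+t^{-1}b)$ and taking conditional expectations relative to $\mathcal F_n$ yields, for each fixed $t>0$,
\[
E_\lambda[\theta\,|\,\mathcal F_n]\ \le\ \tfrac12\big(t\,E_\lambda[\phi\,|\,\mathcal F_n]+t^{-1}\,E_\lambda[\psi\,|\,\mathcal F_n]\big)\qquad\lambda\text{-a.e.},
\]
and optimizing over rational $t$ gives $E_\lambda[\theta\,|\,\mathcal F_n]\le\big(E_\lambda[\phi\,|\,\mathcal F_n]\,E_\lambda[\psi\,|\,\mathcal F_n]\big)^{1/2}$ $\lambda$-a.e. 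Letting $n\to\I$ and using the martingale convergence theorem (since $\mathcal F_n\uparrow\cB(\RI)$) gives $\theta\le\sqrt{\phi\psi}$ $\lambda$-a.e., whence for arbitrary $E\in\cB(\RI)$ a final Cauchy--Schwarz yields $|f\cdot g|(E)=\int_E\theta\,d\lambda\le\int_E\sqrt{\phi\psi}\,d\lambda\le(\int_E\phi\,d\lambda)^{1/2}(\int_E\psi\,d\lambda)^{1/2}=\sqrt{|f|^2(E)}\sqrt{|g|^2(E)}$. Apart from this extension step, the only other point requiring care is the a.e.\ identification of $F_n,G_n$ as the limits of the fibrewise $L^2$-norms.
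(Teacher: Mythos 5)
Your proposal is correct, and it reaches the cylinder-set estimate in essentially the paper's way (Cauchy--Schwarz on the iterated integrals; you split it into a fibrewise application plus one over the base, which in addition gives the pointwise density bound $|h_n|\le\sqrt{F_nG_n}$), but your globalization step is genuinely different from the paper's. The paper passes from cylinder sets to compact sets via $K=\bigcap_n p_n^{-1}(p_n(K))$ and continuity from above, and then invokes tightness (inner regularity) of complex Borel measures on the Polish space $\RI$ to reach arbitrary $E\in\cB(\RI)$; note that this route bounds the values $|f\cdot g(E)|$ on cylinders and compacts and leaves the recovery of the total variation $|f\cdot g|(E)$ (which implicitly needs one more Cauchy--Schwarz over partitions) rather terse. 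You instead control the total variation on cylinder sets directly, then put $\lambda=|f|^2+|g|^2+|f\cdot g|$, convert the cylinder inequality into an inequality between conditional expectations of the Radon--Nikodym densities via the $\sqrt{ab}\le\tfrac12(ta+t^{-1}b)$ trick with rational $t$, and conclude $\theta\le\sqrt{\phi\psi}$ $\lambda$-a.e.\ by martingale convergence along $\mathcal{F}_n\uparrow\cB(\RI)$. What each approach buys: the paper's argument is topologically flavored and avoids conditional expectations, but depends on tightness and is less explicit about the total-variation step; yours is purely measure-theoretic, needs no regularity of the measures, and yields the stronger a.e.\ (Hellinger-type) density inequality, from which the stated set inequality follows for all Borel sets in one stroke. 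Both proofs are valid; yours is arguably the more airtight of the two as written.
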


\begin{proof}
Let us first prove the case $E$ is of the form $p_n^{-1}(E_n), E_n\in\cB(\R^n)$. 
Let $h^k_n$ and $h_n$ are the function defined by \eqref{product L2}.   
\begin{align*}
|f\cdot g(p_n^{-1}(E_n))|&\le\limsup_{k\to\I}\int_{E_n}\int_{\R^k} |f_{n+k}(x, x')g_{n+k}(x, x')|dx' dx \\
&\le \lim_{k\to\I} \sqrt{\int_{E_n}\int_{\R^k} |f_{n+k}(x, x')|^2dx'dx}\sqrt{\int_{E_n}\int_{\R^k} |g_{n+k}(x, x')|^2dx'dx} \\
&= \sqrt{|f|^2 (p_n^{-1}(E_n))}\sqrt{|g|^2(p_n^{-1}(E_n))}, 
\end{align*}
which proves \eqref{Schwarz}.

If $K$ is a compact set of $\RI$, 
\[
K=\bigcap _{n=1}^{\infty} p_n^{-1}(p_n(K))
\]
holds (see for instance \cite{Bourbaki}). 
Thus we have
\begin{align*}
f\cdot g(K)&= f\cdot g\left( \bigcap _{n=1}^{\infty} p_n^{-1}(p_n(K)) \right) =  \lim_{k\to\I}f\cdot g(p_n^{-1}(p_n(K))) \\
&\le \lim_{k\to\I}\sqrt{|f|^2(p_n^{-1}(p_n(K)))}\sqrt{|g|^2(p_n^{-1}(p_n(K)))}=\sqrt{|f|^2(K)}\sqrt{|g|^2(K)}.
\end{align*}

Since a complex Borel measure on a separable complete metric space is tight (see for instance \cite{DunfordSchwartz}, \cite{Parthasarathy})
and a countable product of separable spaces is also separable,  complex Borel measures on $\RI$ are tight.
So for all $\epsilon >0$ and $E\in\cB(\RI)$ there exists a compact set $K\subset E$ of $\RI$
such that 
\[
\left| |f\cdot g|(E)-|f\cdot g|(K) \right| \le
|f\cdot g| (E\setminus K) \le \epsilon . 
\]

It shows that \eqref{Schwarz} holds for $E\in\cB(\RI)$. 

\end{proof}

Set $f,g \in\cL$. A sesquilinear form $\la \cdot , \cdot \ra_{\cl^2}:\cL\times\cL \mapsto \C$
is defined by $\la f,g \ra_{\cl^2}=(f\cdot \bar{g})(\RI)$ and a seminorm is defined by
$\|f\|_{\cl^2}=\sqrt{\la f,f\ra_{\cl^2}}$.  
$f$ is said to be equivalent to $g$ if $\|f-g\|_{\cl^2}=0$. 
Inequality \eqref{Schwarz} shows that the equivalence relation is well defined.
Let $L^2(\RI)$ be the quotient set of $\cL$ by this equivalence relation. 
$L^2(\RI)$ is a pre-Hilbert space by the inner product $\la\cdot,\cdot\ra_{L^2}$ induced by
$\la\cdot,\cdot\ra_{\cl^2}$. Here we write $\|f\|_{L^2}=\sqrt{\la f,f\ra_{L^2}}$.
We will show that $L^2(\RI)$ is a Hilbert space.  
The following three lemmas are needed to prove. 

\begin{Lem}\label{com cL} 
For all $f=\{f_n\} \in \cL, f_n(x) \ge 0$, there exists $g=\{g_n\}$ which is equivalent to $f$ and  
satisfy the $L^2$-compatibility condition i.e.
\[
\int_{\R}|g_{n+1}(x,x')|^2dx'=|g_n(x)|^2~~a.e.x\in\R^n.
\]
\end{Lem}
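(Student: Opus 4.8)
The plan is to take for $g$ the canonical square root of the measure $|f|^{2}$ and to prove it is equivalent to $f$. Since the $f_{n}$ are non-negative, $|f|^{2}=f\cdot\bar f$ is a positive bounded Borel measure whose projection $p_{n}(|f|^{2})$ has an $L^{1}$-density $h_{n}\ge 0$ satisfying $\int_{\R}h_{n+1}(x,x')\,dx'=h_{n}(x)$ and $h_{n}(x)=\lim_{k\to\I}\int_{\R^{k}}|f_{n+k}(x,x')|^{2}\,dx'$ --- this is exactly the construction preceding the statement, applied to $f$ and $\bar f$. I would set $g_{n}:=\ssqrt{h_{n}}$ and $g:=\{g_{n}\}$. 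Then $\int_{\R^{n}}|g_{n}|^{2}=\int_{\R^{n}}h_{n}=|f|^{2}(\RI)=\|f\|_{\cl^{2}}^{2}<\I$, so $g_{n}\in L^{2}(\R^{n})$, and the compatibility relation for $\{h_{n}\}$ reads exactly $\int_{\R}|g_{n+1}(x,x')|^{2}\,dx'=|g_{n}(x)|^{2}$; hence $g\in\cLP$ and it satisfies the $L^{2}$-compatibility condition. It remains to show $\|f-g\|_{\cl^{2}}=0$. Since the product is commutative and bilinear and $\bar f=f$, $\bar g=g$, we have $\|f-g\|_{\cl^{2}}^{2}=(f\cdot f)(\RI)-2(f\cdot g)(\RI)+(g\cdot g)(\RI)$; here $(f\cdot f)(\RI)=\|f\|_{\cl^{2}}^{2}$, and the exact compatibility of $g$ gives $\int_{\R^{k}}|g_{n+k}|^{2}\,dx'=|g_{n}|^{2}=h_{n}$ for every $k$, so $p_{n}(|g|^{2})$ also has density $h_{n}$ and $(g\cdot g)(\RI)=\int_{\R^{n}}h_{n}=\|f\|_{\cl^{2}}^{2}$. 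Thus the lemma reduces to the identity $(f\cdot g)(\RI)=\int_{\R^{n}}h_{n}(x)\,dx$.

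To obtain this, let $\ell_{n}(x)=\lim_{k\to\I}\int_{\R^{k}}f_{n+k}(x,x')g_{n+k}(x,x')\,dx'$ denote the density of $p_{n}(f\cdot g)$, so the identity becomes $\int_{\R^{n}}(h_{n}-\ell_{n})=0$, and put $D_{k}(x):=\int_{\R^{k}}\bigl(f_{n+k}(x,x')-g_{n+k}(x,x')\bigr)^{2}\,dx'\ge 0$. The three sequences $\int_{\R^{k}}|f_{n+k}|^{2}\,dx'\ (\to h_{n})$, $\int_{\R^{k}}f_{n+k}g_{n+k}\,dx'\ (\to\ell_{n})$ and $\int_{\R^{k}}|g_{n+k}|^{2}\,dx'\ (\equiv h_{n})$ all converge pointwise a.e., being finite linear combinations of non-increasing sequences of $L^{1}$-functions (a consequence of the Cauchy--Schwarz inequality and the compatibility conditions), so expanding the square yields $D_{k}(x)\to 2\bigl(h_{n}(x)-\ell_{n}(x)\bigr)$ a.e. Fatou's lemma applied to $D_{k}\ge 0$ then gives
\[
2\int_{\R^{n}}(h_{n}-\ell_{n})=\int_{\R^{n}}\lim_{k}D_{k}\le\liminf_{k\to\I}\int_{\R^{n}}D_{k}(x)\,dx=\liminf_{k\to\I}\int_{\R^{n+k}}\bigl(f_{n+k}-g_{n+k}\bigr)^{2}.
\]
The left-hand side is $\|f-g\|_{\cl^{2}}^{2}\ge 0$, so it suffices to prove $\int_{\R^{n+k}}(f_{n+k}-g_{n+k})^{2}\to 0$ as $k\to\I$. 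By the elementary bound $(\ssqrt{a}-\ssqrt{b})^{2}\le|a-b|$ for $a,b\ge 0$ (with $a=|f_{m}|^{2}$, $b=h_{m}$), this is implied by $\bigl\|\,|f_{m}|^{2}-h_{m}\,\bigr\|_{L^{1}(\R^{m})}\to 0$ as $m\to\I$, i.e. the $L^{2}$-density $|f_{m}|^{2}$ and the measure density $h_{m}$ of $|f|^{2}$ merge in $L^{1}$ as the dimension grows.

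I expect this $L^{1}$-estimate to be the main obstacle. I would prove it from the same monotonicity: writing $f=\sum_{p}\alpha_{p}f^{p}$ with $f^{p}\in\cLP$, the functions $\phi^{m,pq}_{k}(x):=\int_{\R^{k}}f^{p}_{m+k}(x,x')f^{q}_{m+k}(x,x')\,dx'\ge 0$ are non-increasing in $k$, with $\int_{\R^{m}}\phi^{m,pq}_{k}=c^{pq}_{m+k}$, where $l\mapsto c^{pq}_{l}:=\int_{\R^{l}}f^{p}_{l}f^{q}_{l}$ is non-increasing, hence convergent; therefore, with $\phi^{m}_{k}:=\sum_{p,q}\alpha_{p}\bar\alpha_{q}\phi^{m,pq}_{k}=\int_{\R^{k}}|f_{m+k}|^{2}\,dx'$, one gets $\|\phi^{m}_{0}-\phi^{m}_{k}\|_{L^{1}(\R^{m})}\le\sum_{p,q}|\alpha_{p}||\alpha_{q}|\,(c^{pq}_{m}-c^{pq}_{m+k})\le\sum_{p,q}|\alpha_{p}||\alpha_{q}|\,\bigl(c^{pq}_{m}-\lim_{l}c^{pq}_{l}\bigr)$, which tends to $0$ as $m\to\I$ uniformly in $k$. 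Since $|f_{m}|^{2}=\phi^{m}_{0}$ and $\phi^{m}_{k}\to h_{m}$ in $L^{1}(\R^{m})$ as $k\to\I$, substituting into $\bigl\|\,|f_{m}|^{2}-h_{m}\,\bigr\|_{L^{1}}\le\|\phi^{m}_{0}-\phi^{m}_{k}\|_{L^{1}}+\|\phi^{m}_{k}-h_{m}\|_{L^{1}}$ and letting first $k\to\I$, then $m\to\I$, yields the claim. (If $f\in\cLP$ the proof is much shorter: the weakly-$L^{2}$-compatibility gives $g_{n}=\ssqrt{h_{n}}\le f_{n}$ pointwise, so $\int_{\R^{k}}f_{n+k}g_{n+k}\,dx'\ge\int_{\R^{k}}|g_{n+k}|^{2}\,dx'=h_{n}$, and $(f\cdot g)(\RI)=\int_{\R^{n}}h_{n}$ follows at once together with Proposition \ref{Sc eq}; the work above is needed only for general $f\in\cL$.)
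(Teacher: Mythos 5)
Your proposal is correct and essentially reproduces the paper's own argument: you choose the same $g_n=\sqrt{h_n}$ built from the densities $h_n(x)=\lim_{k\to\I}\int_{\R^k}|f_{n+k}(x,x')|^2dx'$ of $|f|^2$, verify the exact compatibility, and reduce $\|f-g\|_{\cl^2}=0$ to the $L^1$-estimate $\||f_m|^2-h_m\|_{L^1(\R^m)}\to 0$ via the same elementary inequality $(\sqrt{a}-\sqrt{b})^2\le|a-b|$ and the same mechanism of non-increasing total-mass sequences. The differences are only bookkeeping: the paper assumes $f=f^1-f^2$ with $f^1,f^2\in\cLP$ and writes $\|f-g\|^2_{\cl^2}=\lim_{n\to\I}\int_{\R^n}|f_n-g_n|^2dx$ directly, whereas you treat a general decomposition $\sum_p\alpha_p f^p$ and insert a bilinear expansion plus a Fatou step, a slightly more careful route to the same estimate.
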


\begin{proof}
Assume that $f$ is of the form $f=f^1-f^2, f^1=\{f_n^1\}, f^2=\{f_n^2\} \in \cLP,\\ f_n^1-f_n^2 \ge 0$. 
Set $g_n(x)=\displaystyle \sqrt{\lim_{k\to\I}\int_{\R^k} |f_{n+k}(x,x')|^2dx'}$. 
It is easy to check that $g=\{g_n\}$ satisfies $L^2$-compatibility condition. $f\sim g$ is shown by 
\begin{align*}
\|f-g\|^2_{\mathcal{L}^2}&=\lim_{n\to\I}\int_{\R^n}|f_n(x)-g_n(x)|^2 dx \\
&\le \lim_{n\to\I} \int_{\R^n}\left||f_n(x)|^2-|g_n(x)|^2 \right|dx \\
& = \lim_{n\to\I} \int_{\R^n}\left||f_n^1(x)-f_n^2(x)|^2-\lim_{k\to\I}\int_{\R^k}|f^1_{n+k}(x,x')-f^2_{n+k}(x,x')|^2dx' \right|dx \\
&\le \sum_{j=1}^{2}\lim_{n\to\I}\int_{\R^n}\left(|f_n^j(x)|^2-\lim_{k\to\I}\int_{\R^k}|f_{n+k}^j(x,x')|^2dx'\right)dx \\
&+2\lim_{n\to\I}\int_{\R^n}\left(f_n^1(x)f^2_n(x)-\lim_{k\to\I}\int_{\R^k}f_{n+k}^1(x,x')f_{n+k}^2(x,x')dx'\right)dx=0 . 
\end{align*}
Here we have used 
\[
|a-b|^2 \le |a^2-b^2| ~~(a, b\ge 0).
\]
\end{proof}

\begin{Lem}\label{sup cL}
Let $f=\{f_n\}\in\cL$. Then there exist $h^j=\{h^j_n\}\in\cL, \\ h_n^j(x)\ge 0~ (j=1,2,3,4)$ such that
\begin{enumerate}
\item $h=h^1-h^2+\sqrt{-1}(h^3-h^4) \in\cL$ is equivalent to $f$, 
\item $h_n^1(x)h_n^2(x)=h_n^3(x)h_n^4(x)=0, ~\mathrm{a.e.} x\in\R^n$.
\end{enumerate}
\end{Lem}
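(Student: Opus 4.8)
The plan is to split $f$ into its real and imaginary parts and then perform a Hahn-type decomposition of each, level by level, using the lattice operations on the density functions. First I would apply Proposition~\ref{L2 decomposition} to write $f=f^1-f^2+\sqrt{-1}(f^3-f^4)$ with $f^j=\{f^j_n\}\in\cLP$; in particular $f^j_n\ge 0$ for every $n$. The natural candidates are the positive and negative parts
\[
h^1_n=(f^1_n-f^2_n)\vee 0,\qquad h^2_n=(f^2_n-f^1_n)\vee 0,
\]
and analogously $h^3_n,h^4_n$ built from $f^3,f^4$. All $h^j_n$ are non-negative, and the elementary real-variable identities $(a-b)\vee 0-(b-a)\vee 0=a-b$ and $\bigl((a-b)\vee 0\bigr)\bigl((b-a)\vee 0\bigr)=0$ give, pointwise, $h^1_n-h^2_n=f^1_n-f^2_n$, $h^3_n-h^4_n=f^3_n-f^4_n$ and $h^1_nh^2_n=h^3_nh^4_n=0$. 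Hence $h:=h^1-h^2+\sqrt{-1}(h^3-h^4)$ satisfies $h_n=f_n$ for every $n$, so $h$ is equivalent to $f$ (indeed equal as an element of $\cL$), and condition~$(2)$ holds by construction.

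The only genuine point to check — and the main obstacle — is that each $h^j$ actually belongs to $\cL$. Rewriting $h^1_n=f^1_n-(f^1_n\wedge f^2_n)$ and $h^2_n=f^2_n-(f^1_n\wedge f^2_n)$, it suffices to prove that $\{f^1_n\wedge f^2_n\}$ is a superprojective system of $L^2$-functions: then $h^1=f^1-\{f^1_n\wedge f^2_n\}$ and $h^2=f^2-\{f^1_n\wedge f^2_n\}$ are differences of members of $\cLP$, hence lie in $\cL$, and likewise for $h^3,h^4$. Since $0\le f^1_n\wedge f^2_n\le f^1_n$, the functions $f^1_n\wedge f^2_n$ are non-negative and in $L^2(\R^n)$, so what remains is the weakly $L^2$-compatibility condition~\eqref{weakly L2}. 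This I would deduce from
\[
\int_{\R}\bigl|f^1_{n+1}(x,x')\wedge f^2_{n+1}(x,x')\bigr|^2\,dx'
\le \min\bigl\{|f^1_n(x)|^2,\,|f^2_n(x)|^2\bigr\}
= \bigl|f^1_n(x)\wedge f^2_n(x)\bigr|^2\quad\text{a.e. }x\in\R^n,
\]
where the inequality comes from bounding $f^1_{n+1}\wedge f^2_{n+1}$ above by $f^1_{n+1}$ (resp.\ $f^2_{n+1}$) under the integral and then applying the weakly $L^2$-compatibility of $f^1$ (resp.\ $f^2$), and the final equality uses that all quantities involved are non-negative.

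Granting this, the lemma is immediate: $h^1,h^2,h^3,h^4\in\cL$ with $h^j_n\ge 0$, $h=h^1-h^2+\sqrt{-1}(h^3-h^4)\in\cL$ is equivalent to $f$, and $h^1_nh^2_n=h^3_nh^4_n=0$ a.e. I do not anticipate any further difficulty; essentially the whole argument is the observation that the lattice operation $\wedge$ respects the weakly $L^2$-compatibility condition, combined with the trivial Hahn-type identities for real numbers. Note that Lemma~\ref{com cL} is not actually needed for this route.
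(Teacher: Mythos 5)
Your proof is correct, but it takes a genuinely different (and more economical) route than the paper. The paper does not work with the $f^j$ from Proposition \ref{L2 decomposition} directly: it first invokes Lemma \ref{com cL} to replace each $f^j$ by an equivalent system $g^j$ satisfying the $L^2$-compatibility condition \emph{with equality}, and then shows that $H_n=g^1_n+g^2_n-|g^1_n-g^2_n|\ (=2\,g^1_n\wedge g^2_n)$ is superprojective by an identity-based computation in which the exact equalities $\int_{\R}|g^j_{n+1}(x,x')|^2dx'=|g^j_n(x)|^2$ are essential (with mere superprojectivity one cannot pass from $\int_{\R}\bigl||g^1_{n+1}|^2-|g^2_{n+1}|^2\bigr|dx'$ down to $\bigl||g^1_n|^2-|g^2_n|^2\bigr|$); the resulting $h$ is then only \emph{equivalent} to $f$, not equal to it. You instead observe that the lattice operation itself is compatible with condition \eqref{weakly L2}: since $0\le f^1_{n+1}\wedge f^2_{n+1}\le f^j_{n+1}$ for $j=1,2$, the superprojectivity of $\{f^1_n\wedge f^2_n\}$ follows by monotonicity alone, so Lemma \ref{com cL} is not needed at this point and your $h$ coincides with $f$ componentwise. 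Both constructions produce the same kind of decomposition $h^1_n=f^1_n-(f^1_n\wedge f^2_n)$, $h^2_n=f^2_n-(f^1_n\wedge f^2_n)$ with $h^1_nh^2_n=0$ a.e., and your $h^j$ satisfy exactly what the lemma asserts (membership in $\cL$ with nonnegative components, not membership in $\cLP$), which is also all that the completeness theorem later uses before it applies Lemma \ref{com cL} to the $h^{l,j}$ separately. In short: your argument is simpler and slightly stronger (equality instead of equivalence); the paper's detour through projective systems buys nothing for this particular lemma, though Lemma \ref{com cL} remains indispensable elsewhere.
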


\begin{proof}
By Proposition \ref{L2 decomposition}, there exist $\{f^j_n\}\in\cLP~(j=1,2,3,4)$ such 
that $f=f^1-f^2+\sqrt{-1}(f^3-f^4)$. And lemma \ref{com cL} shows that there exist 
$g^j=\{g_n^j\} \in\cLP ~(j=1,2,3,4)$ such that 
\[
g^j\sim f^j,~~ \int_{\R}|g_{n+1}^j(x,x')|^2dx'=|g_n^j(x)|^2~~\mathrm{a.e.}x\in\R^n~ (j=1,2,3,4).
\]
Now set 
\[
H_n(x)=g_n^1(x)+g_n^2(x)-|g^1_n(x)-g^2_n(x)|.
\]
Due to
\begin{align*}
|H_n(x)|^2&=|g^1_n(x)+g^2_n(x)|^2+|g^1_n(x)-g^2_n(x)|^2-2\bigl||g_n^1(x)|^2-|g_n^2(x)|^2\bigr| \\
&=2(|g_n^1(x)|^2+|g_n^2(x)|^2)-2\bigl||g_n^1(x)|^2-|g_n^2(x)|^2\bigr|,
\end{align*}
it follows that
\begin{align*}
\int_{\R} & |H_{n+1}(x,x')|^2dx' \\
&=2\int_{\R}(|g_{n+1}^1(x,x')|^2+|g_{n+1}^2(x,x')|^2)dx'-2\int_{\R}\bigl| |g_{n+1}^1(x,x')|^2-|g_{n+1}^2(x,x')|^2\bigr|dx' \\
&\le 2(|g_n^1(x)|^2+|g_n^2(x)|^2)-2\left|\int_{\R}(|g_{n+1}^1(x,x')|^2-|g_{n+1}^2(x,x')|^2)dx'
\right| \\
&=2(|g_n^1(x)|^2+|g_n^2(x)|^2)-2\bigl| |g_n^1(x)|^2-|g_n^2(x)|^2\bigr|=|H_n(x)|^2.
\end{align*}
This shows that $\{H_n\}$ is the superprojective system. Thus  
$\displaystyle \frac{|g_n^1-g_n^2|\pm (g_n^1-g^2_n)}{2}$
belongs to $\cL$. In the same way, we can show that $\displaystyle \frac{|g_n^3-g_n^4|\pm (g_n^3-g^4_n)}{2}$ belongs to $\cL$. 
Here by letting
\begin{align*}
h^1&= \frac{|g_n^1-g_n^2|+ (g_n^1-g^2_n)}{2} ,~h^2=\frac{|g_n^1-g_n^2| - (g_n^1-g^2_n)}{2} , \\
h^3&= \frac{|g_n^3-g_n^4|+ (g_n^3-g^4_n)}{2} ,~h^4=\frac{|g_n^3-g_n^4| - (g_n^3-g^4_n)}{2} ,
\end{align*}
we can show that both of $(1)$ and $(2)$ are satisfied.
\end{proof}

For a topological space $X$, let $M(X)$ be the totality of complex Borel measures. $M(X)$ is a complete metric space with the total variation norm 
$\displaystyle \|\mu\|= \sup_{E \in \cB(X)} |\mu(E)|$.
\begin{Lem}\label{complete L^1}
Let $M_0(\RI)$ be a collection of complex Borel measures whose
 finite dimensional projections $p_n(\mu)=\mu(p_n^{-1}(E_n))~(E_n\in\cB(\R^n) )$ are absolutely continuous with respect to 
the Lebesgue measure on $\R^n$. Then $M_0(\R^{\I})$ is a closed subspace of $M(\RI)$.
\end{Lem}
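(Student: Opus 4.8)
The plan is to check first that $M_0(\RI)$ is a linear subspace of $M(\RI)$ and then --- the only substantive point --- that it is closed. Linearity is immediate: for $\mu,\nu\in M_0(\RI)$, $\alpha,\beta\in\C$ and every $n$ one has $p_n(\alpha\mu+\beta\nu)=\alpha\,p_n(\mu)+\beta\,p_n(\nu)$, and a linear combination of measures on $\R^n$ that are absolutely continuous with respect to Lebesgue measure $\lambda_n$ is again absolutely continuous with respect to $\lambda_n$; hence $\alpha\mu+\beta\nu\in M_0(\RI)$.

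For closedness I would take a sequence $\{\mu_j\}\subset M_0(\RI)$ with $\|\mu_j-\mu\|\to0$ for some $\mu\in M(\RI)$ and show $p_n(\mu)\ll\lambda_n$ for every $n$. Fix $n$ and a Borel set $E_n\subset\R^n$ with $\lambda_n(E_n)=0$. For any Borel $F\subset E_n$ we also have $\lambda_n(F)=0$, so $\mu_j\bigl(p_n^{-1}(F)\bigr)=p_n(\mu_j)(F)=0$ for all $j$ by absolute continuity of $p_n(\mu_j)$. Since $p_n$ is continuous, $p_n^{-1}(F)\in\cB(\RI)$, and therefore
\[
\bigl|p_n(\mu)(F)\bigr|=\bigl|\mu\bigl(p_n^{-1}(F)\bigr)-\mu_j\bigl(p_n^{-1}(F)\bigr)\bigr|\le\|\mu-\mu_j\|\longrightarrow0\qquad(j\to\I),
\]
whence $p_n(\mu)(F)=0$. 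As $F$ ranges over all Borel subsets of $E_n$, taking the supremum of $\sum_i|p_n(\mu)(F_i)|$ over finite Borel partitions $\{F_i\}$ of $E_n$ gives $|p_n(\mu)|(E_n)=0$; that is, $p_n(\mu)\ll\lambda_n$. Since $n$ was arbitrary, $\mu\in M_0(\RI)$, so $M_0(\RI)$ is closed in $M(\RI)$.

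There is essentially no obstacle here beyond bookkeeping. The one place worth a line of justification is the passage from ``$p_n(\mu)(F)=0$ whenever $\lambda_n(F)=0$'' to ``$|p_n(\mu)|\ll\lambda_n$'', i.e.\ that the total variation measure also vanishes on $\lambda_n$-null sets; this is the standard characterization of absolute continuity of complex measures and is handled exactly by the partition supremum above. It is also worth remarking that, combined with the completeness of $M(\RI)$ recalled before the lemma, this shows $M_0(\RI)$ is itself a complete metric space, which is presumably the form in which the statement will be applied.
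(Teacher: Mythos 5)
Your proof is correct, but it takes a different route from the paper's. The paper works with the Radon--Nikodym densities: it lets $g^j_n$ be the density of $p_n(\mu^j)$, observes $\int_{\R^n}|g^j_n-g^l_n|\,dx\le\|\mu^j-\mu^l\|$, so for each fixed $n$ the densities form a Cauchy sequence in $L^1(\R^n)$, and then identifies the $L^1$-limit as the density of $p_n(\mu)$, which gives absolute continuity (and, as a by-product, an explicit density for the limit measure, which is convenient in the subsequent completeness theorem where such a density $G_n$ is used). You instead verify absolute continuity directly from its null-set characterization: every Borel subset $F$ of a Lebesgue-null set $E_n$ satisfies $p_n(\mu_j)(F)=0$, the total-variation bound forces $p_n(\mu)(F)=0$, and the partition supremum gives $|p_n(\mu)|(E_n)=0$. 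Your argument is more elementary (no Radon--Nikodym theorem, no $L^1$ completeness) and in fact sidesteps the one step the paper leaves implicit, namely why the $L^1$-limit of the $g^j_n$ really represents $p_n(\mu)$; conversely, the paper's version hands you the limiting density explicitly, which you would recover only afterwards by invoking Radon--Nikodym. Your closing remarks on linearity and on completeness of $M_0(\RI)$ as a closed subspace of the complete space $M(\RI)$ are accurate and match how the lemma is used.
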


\begin{proof}
Let $\{\mu^j\}$ be a Cauchy sequence of $M_0(\R^{\I})$. Since the totality of complex Borel measures on metric spaces are complete 
with the total variation norm (see for instance \cite{Parthasarathy}), ${\mu^j}$ converges to $\mu$: a complex measure on $\RI$.
Let $g^j_n$ be the density function of $p_n(\mu_j)$. Then 
\[
\int_{R^n}|g^j_n(x)-g^l_n(x)|dx \le \|\mu^j-\mu^l \|.
\]  
Therefore for all $n$, $g^j_n$ is a Cauchy sequence of $L^1(R^n)$. This shows that the density function of $p_n(\mu)$
is $\displaystyle \lim_{n\to\I} g_n^j(x)$.
  
\end{proof}

\begin{Th}
$L^2(\RI)$ is a complete metric space with respect to $\|\cdot\|_{L^2}$.
\end{Th}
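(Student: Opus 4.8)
The plan is to push a Cauchy sequence $\{f^j\}$ in $L^2(\RI)$ to the side of measures, where the three preceding lemmas supply the needed compactness, and then to reconstruct the limit --- together with its ``phase'' --- as an element of $\cL$.

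\emph{Step 1 (reduction to measures).} Using Proposition \ref{L2 decomposition}, Lemma \ref{sup cL} and Lemma \ref{com cL}, replace each $f^j$ by an equivalent representative $\{f^j_n\}$ built from four non-negative components with disjoint supports, each satisfying the $L^2$-compatibility condition; then $\{f^j_n\}$ itself satisfies $\int_\R|f^j_{n+1}(x,x')|^2\,dx'=|f^j_n(x)|^2$, the density of $p_n(|f^j|^2)$ equals $|f^j_n|^2$, and $\|f^j\|_{L^2}^2=\int_{\R^n}|f^j_n|^2\,dx$ for every $n$. From $|f^j|^2-|f^l|^2=(f^j-f^l)\cdot\overline{f^l}+f^l\cdot\overline{(f^j-f^l)}+|f^j-f^l|^2$ and Proposition \ref{Sc eq} applied term by term,
\[
\bigl\||f^j|^2-|f^l|^2\bigr\|\le\bigl(2\|f^l\|_{L^2}+\|f^j-f^l\|_{L^2}\bigr)\,\|f^j-f^l\|_{L^2},
\]
so $\{|f^j|^2\}$ is Cauchy in $M(\RI)$ and, by completeness of $M(\RI)$ and Lemma \ref{complete L^1}, converges to a positive $\mu\in M_0(\RI)$. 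The same estimate applied to $(f^j-f^k)\cdot\overline{f^l}$ shows that for each $l$ the sequence $\{f^j\cdot\overline{f^l}\}_j$ is Cauchy in $M(\RI)$ with limit $\tau^l\in M_0(\RI)$, where $|\tau^l|(E)\le\sqrt{\mu(E)}\,\sqrt{|f^l|^2(E)}$, and the Cauchy property also gives $\tau^l\to\mu$ in $M(\RI)$ as $l\to\I$.

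\emph{Step 2 (identifying the limit).} It is enough to produce $f\in\cL$ with $f\cdot\overline{f^l}=\tau^l$ for all $l$ and $|f|^2(\RI)=\mu(\RI)$. Then $\la f^j,f\ra_{L^2}=\overline{\tau^j(\RI)}$, $\|f^j\|_{L^2}^2=|f^j|^2(\RI)$, $\|f\|_{L^2}^2=\mu(\RI)$, and since $\tau^j(\RI)\to\mu(\RI)$ and $|f^j|^2(\RI)\to\mu(\RI)$,
\[
\|f^j-f\|_{L^2}^2=\|f^j\|_{L^2}^2+\|f\|_{L^2}^2-2\,\mathrm{Re}\,\la f^j,f\ra_{L^2}\ \longrightarrow\ \mu(\RI)+\mu(\RI)-2\mu(\RI)=0 .
\]
It is the whole family $\{\tau^l\}$, not $\mu$ alone, that encodes the phase of the limit: $f^j=e^{\sqrt{-1}\,(1+1/j)}\sqrt{\mu}$ is Cauchy with limit $e^{\sqrt{-1}}\sqrt{\mu}\neq\sqrt{\mu}$, although $|f^j|^2=\mu$ for all $j$.

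\emph{Step 3 (constructing $f$ --- the crux).} Fix the dominating measure $\nu:=\mu+\sum_j 2^{-j}(1+\|f^j\|_{L^2}^2)^{-1}|f^j|^2\in M_0(\RI)$ (Lemma \ref{complete L^1}), with respect to which $\mu$, each $|f^j|^2$, and --- by Proposition \ref{Sc eq} --- each $f^j\cdot\overline{f^l}$ and each $\tau^l$ are absolutely continuous; set $\psi^{jk}:=d(f^j\cdot\overline{f^k})/d\nu$, $\sigma^l:=d\tau^l/d\nu$, $\rho:=d\mu/d\nu$. Proposition \ref{Sc eq} and the positivity of the measures $|\sum c_pf^{j_p}|^2$ give that the matrix $(\psi^{jk}(x))_{j,k}$ is positive semidefinite for $\nu$-a.e.\ $x$; passing to the limit in $j$ along a common subsequence (using $\psi^{jl}\to\sigma^l$, $\psi^{jj}\to\rho$ in $L^1(\nu)$, hence $\nu$-a.e.), the matrix enlarged by an extra index $\ast$ with $\psi^{\ast\ast}=\rho$, $\psi^{\ast l}=\sigma^l$ is positive semidefinite $\nu$-a.e.\ as well. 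Therefore this Gram kernel factorises measurably: there are a separable Hilbert space $\mathcal H$ and $\nu$-measurable maps $v^l,v^\ast\colon\RI\to\mathcal H$ with $\la v^l,v^k\ra_{\mathcal H}=\psi^{lk}$, $\la v^\ast,v^l\ra_{\mathcal H}=\sigma^l$ and $\|v^\ast\|_{\mathcal H}^2=\rho$; in the direct-integral Hilbert space $L^2(\nu;\mathcal H)$ one computes $\|v^j-v^l\|^2=\|f^j-f^l\|_{L^2}^2$, so $\{v^j\}$ converges to $v^\ast$. It then remains to descend from the vector field $v^\ast$ to a scalar projective system $\{f_n\}$ --- take $|f_n|=\sqrt{d p_n(\|v^\ast\|_{\mathcal H}^2\,\nu)/dx}$ and read the argument of $f_n$ off the coordinate-wise conditional expectations of $v^\ast$ --- and to verify both that $\{f_n\}$ is a genuine element of $\cL$ (this is where the superprojectivity, i.e.\ the admissibility of the phase, must be checked) and that $f\cdot\overline{f^l}=\tau^l$. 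The reductions in Step 1, the Schwarz estimates, and the direct-integral convergence are routine; this final descent --- a measurable selection of the limiting phase that assembles into a bona fide square root of a measure --- is the main obstacle, being precisely the point at which one returns from the abstract limiting vector field to $\cL$.
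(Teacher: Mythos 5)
There is a genuine gap: your Step 3 is not a proof but a statement of the remaining problem. The Gram-kernel factorisation through a direct integral $L^2(\nu;\mathcal H)$ may well give a limiting vector field $v^{\ast}$, but the passage back from $v^{\ast}$ to an actual element of $\cL$ --- choosing $|f_n|$ from projected densities, ``reading the argument off coordinate-wise conditional expectations,'' and then verifying that the resulting complex sequence is a complex linear combination of superprojective systems and satisfies $f\cdot\overline{f^l}=\tau^l$ --- is exactly the content of the theorem, and you explicitly leave it unverified. Note also that the measurable-selection problem you flag is not obviously solvable as posed: nothing in the direct-integral picture guarantees that the ``phase'' of $v^{\ast}$ can be organised into a sequence $\{f_n\}$ obeying the weak $L^2$-compatibility inequalities \eqref{weakly L2}, which is the defining constraint of $\cL$. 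So as written the argument does not establish completeness.

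The idea you are missing is that the phase problem you worry about (your example $e^{\sqrt{-1}(1+1/j)}\sqrt{\mu}$) is dissolved, not solved, by the decomposition you already invoke in Step 1. Because the four nonnegative components produced by Lemma \ref{sup cL} have pairwise disjoint supports within each pair, one has the componentwise estimate $\|f^l-f^m\|_{L^2}^2\ \ge\ \sum_{j=1}^{4}\|h^{l,j}-h^{m,j}\|_{L^2}^2$, so each of the four \emph{nonnegative} sequences is itself Cauchy; by Lemma \ref{com cL} each may be taken to be a projective system. For a nonnegative projective system there is no phase at all: the paper shows $\||g^l|^2-|g^m|^2\|\le M\|g^l-g^m\|_{L^2}$, invokes Lemma \ref{complete L^1} to get a limit measure with densities $G_n\ge 0$, and the limit element is simply $g'=\{\sqrt{G_n}\}$, with $\|g^l-[g']\|_{L^2}^2\le\|\mu-|g^l|^2\|$. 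Your Steps 1--2 (the total-variation Cauchy estimates via Proposition \ref{Sc eq}, the reduction of $\|f^j-f\|_{L^2}\to 0$ to convergence of the mixed products $\tau^l$) are correct and close in spirit to the paper's estimates, but by then recombining the components and chasing the phase of the full complex limit you re-import the difficulty instead of using the reduction to kill it. Restructure the proof so that the limit is constructed component by component in the nonnegative case, and the theorem follows without any selection argument.
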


\begin{proof}
Let $f^l=\{f_n^l\}$ is a Cauchy sequence with respect to $\|\cdot\|_{L^2}$. 
Lemma \ref{sup cL} ensures that there exists the representative element of $f^l$
which is of the form $h^{l,1}-h^{l,2}+i(h^{l,3}-h^{l,4})$ that $h^{l,1}=\{h_n^{l,j}\}, h_n^{l,j}(x)\ge 0~(j=1,2,3,4)$ belong to $\cL$
and $h_n^{l,1}(x)h_n^{l,2}(x)=h_n^{l,3}(x)h_n^{l,4}(x)=0$ for each $l \ge 1$.
For all $l,m\in\N$,
\begin{align*}
\|f^l- f^m\|^2_{L^2} & =\lim_{n\to\I}\int_{\R^n}|(h_n^{l,1}-h_n^{l,2})-i(h_n^{l,3}-h_n^{l,4}) \\
& -(h_n^{m,1}-h_n^{m,2})+i(h_n^{m,3}-h_n^{m,4})|^2dx \ge \sum_{j=1}^{4}\lim_{n\to\I}\int_{\R^n}|h_n^{l,j}-h_n^{m,j}|^2dx.
\end{align*}
From this it follows that $\{[h^{l,j}]\}~(j=1,2,3,4)$ are Cauchy sequences with respect to $\|\cdot\|_{L^2}$. Here $[f]$ denote the equivalence class to which $f$ belongs.
Hence by Lemma \ref{com cL}, $\{[h^{l,j}]\}$ is equivalent to a projective system of $L^2$-functions. 
Thus we can assume that $f^l$ is an equivalence class to
which a projective system $g^l=\{g^l_n\}$ belongs for all $l\in\N$ without loss of generality.   
  
If $E$ is of the form $p_n^{-1}(E_n)(E_n\in\cB(\R^n))$, we obtain
\begin{align*}
\bigl||g^l|^2 & (E) - |g^m|^2(E)\bigr| \\
&=\left|\lim_{k\to\I}\left(
\int_{E_n}\int_{\R^k}|g_{n+k}^l(x,x')|^2dxdx'-\int_{E_n}\int_{\R^k}|g_{n+k}^m(x,x')|^2dxdx'
\right)\right| \\
&=\lim_{k\to\I}\left(\sqrt{
\int_{E_n}\int_{\R^k}|g_{n+k}^l(x,x')|^2dxdx'
}+\sqrt{
\int_{E_n}\int_{\R^k}|g_{n+k}^m(x,x')|^2dxdx'
}\right) \\
&~~~~~~~~~~~~~~~~~\times\left|\sqrt{
\int_{E_n}\int_{\R^k}|g_{n+k}^l(x,x')|^2dxdx'
}-\sqrt{
\int_{E_n}\int_{\R^k}|g_{n+k}^m(x,x')|^2dxdx'
}\right| \\
&\le(\|g^l\|_{L^2}+\|g^m\|_{L^2})\lim_{k\to\I}\sqrt{
\int_{E_n}\int_{\R^k}|g_{n+k}^l(x,x')-g_{n+k}^m(x,x')|^2dxdx'
} \\
&=(\|g^l\|_{L^2}+\|g^m\|_{L^2})\|g^l-g^m\|_{L^2}
\end{align*}
by using the triangle inequality $\left|\|f\|_{L^2}-\|g\|_{L^2} \right|\le \|f-g\|_{L^2}.$
Provided that $\|g^l\|_{L^2}$ is bounded by $M/2$, we can conclude that 
\[
\bigl||g^l|^2(E)-|g^m|^2(E)\bigr| \le M\|g^l-g^m\|_{L^2}.
\]
This inequality holds for all $E\in\cB(\RI)$ as in the proof of Proposition\ref{Sc eq}. Thus we obtain 
\[
\||g^l|^2-|g^m|^2\|_{M} \le M\|g^l-g^m\|_{L^2}.
\]
 According to the above inequality $|g^l|^2$ is a Cauchy sequence with respect to $\|\cdot\|$. 
From Lemma \ref{complete L^1} there exist a positive Borel measure $\mu$ on $\RI$ as the limit of $|g^l|^2$ such that 
\[
\mu_n(E_n)=\int_{E_n}G_n(x)dx
\] 
for $G_n\in L^1(\R^n), G_n(x) \ge 0$.

Set $g'=\{\sqrt{G_n}\}$ and $H_n(x)=\displaystyle\lim_{k\to\I}\int_{R^k}|g_{n+k}(x,x')|^2dx'$. 
We see at once \\ $g'\in \cLP$. In addition   
\begin{align*}
\|g^l-[g']\|^2_{L^2}&=\lim_{n\to\I}\int_{\R^n}|g_n^l(x)-\sqrt{G_n(x)}|^2dx 
\le \lim_{n\to\I}\int_{\R^n}\bigl||g^l_n(x)|^2-G_n(x)\bigr|dx \\
&\le\lim_{n\to\I}\int_{\R^n}|
|g_n^l(x)|^2-H_n(x)
|dx +\lim_{n\to\I}\int_{\R^n}|
H_n(x)-G_n(x)
|dx \\
&\le \|\mu-|g^l|^2\|.
\end{align*}
Thus we obtain $\displaystyle\lim_{l\to\I}\|g^l-[g']\|_{\mathcal{L}^2}=0$, which proves the theorem. 

\end{proof}



Let us introduce two important operators on $L^2(\RI)$: multiplication operators and  convolution products.  
Suppose that $w \in L^2(\RI, \cB(\RI),|f|^2)$ and $f \in L^2(\RI)$. 
For arbitrary $g \in L^2(\RI)$, let $T: L^2(\RI) \to C$ be a linear functional defined by
\[
T(g) = \int_{\RI} w(x) (f\cdot g)(dx) .
\] 
From \eqref{Sc eq} we have 
\[
|T(g)| \le \displaystyle  \left( \int_{\RI} |w(x)|^2 |f|^2(dx)  \right)^{\frac{1}{2}} \|g\| .
\]
So $T$ is bounded
and there exists $h \in L^2(\RI)$ such that $T(g) = \la h, g \ra$ because of Riesz representation theorem. 
We call $h$ mentioned above the multiplication of $w$ and $f$,  and write $w \times f$.

\begin{Prop}
Let $f, g \in L^2(\RI)$ and $u, v$ are bounded Borel measurable complex-valued functions on $\RI$. 
Then, the following $(1)-(5)$ hold. 
\begin{enumerate}
\item $(u+v)\times f=u\times f+v\times f$
\item $u\times (f+g)=u\times f+u\times g$
\item $\overline{u\times f}=\bar{u}\times\bar{f}$
\item $(u\times f,g)=(f,w\times g)$
\item $u\times (v\times f)=(u\times v)\times f$
\end{enumerate}

\end{Prop}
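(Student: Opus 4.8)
The plan is to derive all five identities from the single defining relation of the multiplication operator, namely that for a bounded Borel $w$ on $\RI$ and $f\in L^2(\RI)$ the element $w\times f$ is the unique member of $L^2(\RI)$ with
\[
\la w\times f,\ g\ra_{L^2}=\int_{\RI}w(x)\,(f\cdot\bar g)(dx)\qquad\text{for every }g\in L^2(\RI),
\]
which is exactly how the Riesz representation theorem produced it above. Since an element of $L^2(\RI)$ is determined by how it pairs with every $g$, items (1)--(4) reduce to elementary manipulations of the right-hand side. Item (1) is the additivity of $w\mapsto\int_{\RI}w\,d(f\cdot\bar g)$ in the integrand ($u,v$ bounded and $f\cdot\bar g$ of finite total variation, so every integral is legitimate). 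Item (2) is the distributive law $(f+g)\cdot h=f\cdot h+g\cdot h$ proved above together with additivity of the complex measure $f\cdot\bar h$. Item (3) uses that conjugating the density construction of the product gives $\overline{(f\cdot g)(E)}=(\bar f\cdot\bar g)(E)$, hence $\overline{\la h,g\ra_{L^2}}=\la\bar h,\bar g\ra_{L^2}$, so that conjugating the defining relation for $u\times f$ turns it into the defining relation for $\bar u\times\bar f$. Item (4) combines (3), the commutativity $f\cdot g=g\cdot f$, and the defining relation read at the single set $E=\RI$; note that for correctness the ``$w$'' printed on the right of (4) must be read as $\bar u$, i.e. the claim is $(u\times f,g)=(f,\bar u\times g)$.

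Item (5), in which ``$u\times v$'' denotes the pointwise product $uv$, is the one that needs real work and is where I expect the main obstacle. Evaluating $\la u\times(v\times f),g\ra=\int_{\RI}u\,d\big((v\times f)\cdot\bar g\big)$ requires the whole measure $(v\times f)\cdot\bar g$, not just its total mass, so the genuine target is the distributivity lemma
\[
(w\times f)\cdot g=w\odot(f\cdot g)\qquad(f,g\in L^2(\RI)),
\]
where $w\odot\mu$ is the complex measure $E\mapsto\int_E w\,d\mu$. I would prove it in three stages. First, for a cylinder function $w=\tilde w\circ p_n$ with $\tilde w$ bounded Borel on $\R^n$, I exhibit a representative of $w\times f$ that, for indices $m\ge n$, is $\{(\tilde w\circ p_m)\,f_m\}$ (with $\{f_m\}$ a representative of $f$): weak $L^2$-compatibility holds because $|\tilde w|^2$ factors out of $\int_{\R}|\,\cdot\,|^2\,dx'$, and carrying out the pairing against an arbitrary $g$ via the density description of $f\cdot\bar g$ reproduces $\int_{\RI}w\,d(f\cdot\bar g)$; the same computation identifies the density of $(w\times f)\cdot g$ as $(\tilde w\circ p_m)$ times the density of $f\cdot g$, which is the lemma for cylinder $w$. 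Second, by (1) and scalar-linearity in $w$ the lemma passes to finite linear combinations of cylinder functions. Third, for a general bounded Borel $w$ I pass to the limit: the bound $\|w\times f\|_{L^2}\le(\int_{\RI}|w|^2\,d|f|^2)^{1/2}$ recorded in the construction, together with $\|a\cdot g\|\le\|a\|_{L^2}\|g\|_{L^2}$ from Proposition~\ref{Sc eq}, shows $w\mapsto(w\times f)\cdot g$ is continuous from bounded pointwise convergence to convergence in $\|\cdot\|$, and the same is clear for $w\mapsto w\odot(f\cdot g)$; a functional monotone-class argument based at the multiplicative class of bounded cylinder functions, which generates $\cB(\RI)$, then promotes the identity to all bounded Borel $w$.

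Granting the lemma, (5) follows at once: for every $g$,
\[
\la u\times(v\times f),g\ra=\int u\,d\big((v\times f)\cdot\bar g\big)=\int u\,d\big(v\odot(f\cdot\bar g)\big)=\int uv\,d(f\cdot\bar g)=\la(uv)\times f,g\ra,
\]
and uniqueness in the defining relation gives $u\times(v\times f)=(uv)\times f$. In the write-up I would actually place the distributivity lemma first, since reading it at $E=\RI$ already recovers the defining relation and it gives a self-contained handle on $w\times f$ that also shortens (1)--(4). The only recurring point of care is the bookkeeping of complex conjugates --- the pairing $\la\cdot,\cdot\ra_{L^2}$ is sesquilinear while the product $f\cdot g$ is bilinear --- together with the routine check that every ``measure times function'' appearing is well defined because all the measures are finite and all the functions bounded.
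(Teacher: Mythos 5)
Your proposal is correct, and there is nothing in the paper to compare it against: the paper states this proposition without proof, treating it as routine. Items (1)--(4) are indeed immediate from the defining relation $\la w\times f,g\ra=\int_{\RI}w\,d(f\cdot\bar g)$ together with bilinearity and commutativity of the product measure, the identity $\overline{f\cdot g}=\bar f\cdot\bar g$ (clear from the density construction), and your reading of the misprint in (4) as $\bar u$ (equivalently, the statement as printed is fine for real $u$); you are also right to fix the conjugation mismatch in the paper's own definition of $T$ by using $f\cdot\bar g$. The real content is your distributivity lemma $(w\times f)\cdot g=w\odot(f\cdot g)$, which is exactly what (5) needs and which your three-stage argument (cylinder $w$, linear combinations, functional monotone class using the total-variation bounds $\|(w_j-w)\times f\|^2\le\int|w_j-w|^2\,d|f|^2$ and $|h\cdot g|(E)\le\sqrt{|h|^2(E)}\sqrt{|g|^2(E)}$) establishes; note also that Proposition~\ref{Sc eq} guarantees the measure $(v\times f)\cdot\bar g$ does not depend on the chosen representative, which your argument implicitly uses. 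Two small details to patch in a write-up: for a cylinder function $\tilde w\circ p_n$ the proposed representative $\{(\tilde w\circ p_n)f_m\}$ only makes sense for $m\ge n$, so the first $n-1$ entries must be supplied (e.g.\ by downward $L^2$-consistency, as in Lemma~\ref{com cL}); and membership of this sequence in $\cL$ requires splitting $\tilde w$ into its four nonnegative parts and $f$ into its superprojective components as in Proposition~\ref{L2 decomposition}, since the factoring-out computation is stated for a single superprojective system. Both are routine and do not affect the argument.
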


\begin{Lem}\label{Lemconv}
Suppose that $f=\{f_n\}_{n=1}^{\infty} \in \cLP$ and $\mu \in M_0(\RI)$ be a positive measure such that the density function of $p_n(\mu)$ is given by $g_n \ge 0$ for all $n$.
Then $\{ f_n \ast g_n \} \in \cLP$ hold. Here $\displaystyle (f_n \ast g_n)(x) = \int_{\R^n} f_n(x-y) g_n(y) dy$. 
\end{Lem}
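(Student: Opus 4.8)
The plan is to verify the two requirements in the definition of a superprojective system for the sequence $\{f_n \ast g_n\}$: first, that each $f_n \ast g_n$ is a nonnegative function in $L^2(\R^n)$, and second, the weakly $L^2$-compatibility condition \eqref{weakly L2}. The first requirement is immediate: since $f_n \ge 0$ and $g_n \ge 0$ we get $f_n \ast g_n \ge 0$, and Young's inequality gives $\|f_n \ast g_n\|_{L^2(\R^n)} \le \|f_n\|_{L^2(\R^n)}\,\|g_n\|_{L^1(\R^n)} < \infty$, the last factor being finite because $g_n$ is the density of the finite measure $p_n(\mu)$. Before treating the second requirement I would record that, because $\mu$ is a genuine Borel measure on $\RI$, its marginals are consistent, $p_n(\mu)(E_n) = p_{n+1}(\mu)(E_n \times \R)$, which in terms of densities reads
\[
\int_\R g_{n+1}(x, x')\, dx' = g_n(x) \qquad \text{a.e. } x \in \R^n ;
\]
this $L^1$-compatibility for $\{g_n\}$ will be combined with the weakly $L^2$-compatibility \eqref{weakly L2} for $\{f_n\}$.

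For the compatibility condition I would write a point of $\R^{n+1}$ as $(x, x')$ with $x \in \R^n$, $x' \in \R$, and decompose the $(n+1)$-dimensional convolution as an $\R^n$-average of one-dimensional convolutions in the last variable:
\[
(f_{n+1} \ast g_{n+1})(x, x') = \int_{\R^n} \Bigl( \int_\R f_{n+1}(x - y, x' - y')\, g_{n+1}(y, y')\, dy' \Bigr) dy = \int_{\R^n} \bigl( f_{n+1}(x-y, \cdot) \ast g_{n+1}(y, \cdot) \bigr)(x')\, dy ,
\]
the interchange of integrals being legitimate by Tonelli's theorem since all integrands are nonnegative and measurable. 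Taking the $L^2(\R)$-norm in $x'$ and applying Minkowski's integral inequality,
\[
\bigl\| (f_{n+1} \ast g_{n+1})(x, \cdot) \bigr\|_{L^2(\R)} \le \int_{\R^n} \bigl\| f_{n+1}(x-y, \cdot) \ast g_{n+1}(y, \cdot) \bigr\|_{L^2(\R)}\, dy ,
\]
and one-dimensional Young's inequality bounds the integrand by $\| f_{n+1}(x-y, \cdot) \|_{L^2(\R)}\cdot\| g_{n+1}(y, \cdot) \|_{L^1(\R)}$. Now \eqref{weakly L2} yields $\| f_{n+1}(z, \cdot) \|_{L^2(\R)} \le f_n(z)$ for a.e.\ $z \in \R^n$ (hence, for each fixed $x$, for a.e.\ $y$), while the $L^1$-compatibility above gives $\| g_{n+1}(y, \cdot) \|_{L^1(\R)} = g_n(y)$. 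Substituting and squaring,
\[
\int_\R \bigl| (f_{n+1} \ast g_{n+1})(x, x') \bigr|^2 dx' \le \Bigl( \int_{\R^n} f_n(x - y)\, g_n(y)\, dy \Bigr)^2 = \bigl| (f_n \ast g_n)(x) \bigr|^2 ,
\]
which is precisely \eqref{weakly L2} for $\{f_n \ast g_n\}$, and the proof is done.

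The only genuinely delicate point is the passage from the $(n+1)$-dimensional convolution to the iterated form $\int_{\R^n}(\,\cdot\, \ast_{x'}\, \cdot)\,dy$ and the correct application of Minkowski's integral inequality in the last coordinate; once that decomposition is in place, the rest is a mechanical combination of Young's inequality with the two compatibility conditions. Measurability issues and the use of Tonelli are routine here because every function in sight is nonnegative, so no integrability needs to be checked beyond the finiteness already supplied by Young's inequality on $\R^n$.
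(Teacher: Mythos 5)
Your argument is correct, and it reaches the paper's inequality by a genuinely different route. You slice the $(n+1)$-dimensional convolution into one-dimensional convolutions in the last coordinate, $(f_{n+1}\ast g_{n+1})(x,\cdot)=\int_{\R^n}\bigl(f_{n+1}(x-y,\cdot)\ast g_{n+1}(y,\cdot)\bigr)\,dy$, and then invoke two named facts: Minkowski's integral inequality for the $L^2(\R)$-norm of the $y$-superposition, and one-dimensional Young's inequality for each slice; the compatibility condition \eqref{weakly L2} for $\{f_n\}$ together with the marginal consistency $\int_\R g_{n+1}(x,x')\,dx'=g_n(x)$ then closes the estimate. The paper proves the same bound by hand: it first bounds the inner integral $\int_\R f_{n+1}g_{n+1}\,dy'$ by Cauchy--Schwarz with respect to the measure $g_{n+1}\,dy'$ (splitting $g_{n+1}=\sqrt{g_{n+1}}\cdot\sqrt{g_{n+1}}$), then expands the square of the $dy$-integral as a double integral in duplicated variables $s,t$, interchanges with the $dx'$-integration and applies Cauchy--Schwarz once more before feeding in the two compatibility facts; that middle block is in effect an inlined proof of the Minkowski step you quote. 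So both proofs rest on exactly the same two ingredients ($L^2$-compatibility of $f$, $L^1$-compatibility of $g$), but yours externalizes the analytic work to standard inequalities and is shorter and easier to audit, while the paper's is self-contained, using nothing beyond Cauchy--Schwarz and Tonelli. Your preliminary observation that $f_n\ast g_n\ge 0$ lies in $L^2(\R^n)$ (Young on $\R^n$, with $g_n\in L^1$ because $p_n(\mu)$ is a finite measure) is a point the paper passes over silently but which is indeed needed for membership in $\cLP$.
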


\begin{proof}
It suffices to show that $\{ f_n \ast g_n \}$ satisfies \eqref{weakly L2}. Hence by
\begin{align*}
\int_{\R}\biggl(\int_{\R} & |f_{n+1} (x-y,x'-y')|^2 g_{n+1}(y,y')dy'\biggr) dx' ~~~~~~~~~~~~~~~~~~~~~~~~~~~~~~~~~~~~~~~~~~~~~~~~~  \\
&=\int_{\R}\left(\int_{\R}|f_{n+1}(x-y,x'-y')|^2 dx'\right) g_{n+1}(y,y')dy'   \\
&\le |f_n(x-y)|^2 \int_{\R}g_{n+1}(y,y')dy'=|f_n(x-y)|^2 g_n(y), 
\end{align*}
We have
\begin{align*}
\int_{\R}|(f_{n+1}&\ast g_{n+1})(x,x')|^2 dx' \\
&=\int_{\R}\left(\int_{\R^{n+1}}f_{n+1}(x-y,x'-y')g_{n+1}(y,y')dydy'\right)^2dx' \\
&\le\int_{\R}\left(\int_{\R^n}\sqrt{\int_{\R}|f_{n+1}(x-y,x'-y')|^2 g_{n+1}(y,y')dy'}\sqrt{\int_{\R}g_{n+1}(y,y')dy'}dy\right)^2dx' \\
&=\int_{\R}\left(\int_{\R^n\times\R^n}\sqrt{\int_{\R}|f_{n+1}(x-s,x'-y')|^2 g_{n+1}(s,y')dy'}\sqrt{g_n(s)}\right. \\
&~~~~~~~~~\left.\times\sqrt{\int_{\R}|f_{n+1}(x-t,x'-y')|^2 g_{n+1}(t,y')dy'}\sqrt{g_n(t)}dsdt\right) dx' \\
&=\int_{\R^n\times\R^n}\left(\int_{\R}\sqrt{\int_{\R}|f_{n+1}(x-s,x'-y')|^2 g_{n+1}(s,y')dy'}\sqrt{g_n(s)}\right. \\
&~~~~~~~~~\left.\times\sqrt{\int_{\R}|f_{n+1}(x-t,x'-y')|^2 g_{n+1}(t,y')dy'}\sqrt{g_n(t)}dx' \right) dsdt \\
&\le\int_{\R^n\times\R^n}\sqrt{\int_{\R}\left(\int_{\R}|f_{n+1}(x-s,x'-y')|^2 g_{n+1}(s,y')\right)dx'} \\
&~~~~~~~~~\times\sqrt{\int_{\R}\left(\int_{\R}|f_{n+1}(x-t,x'-y')|^2g_{n+1}(t,y')\right)dx'}\sqrt{g_n(s)g_n(t)}dsdt \\
&\le\int_{\R^n\times\R^n}\sqrt{|f_n(x-s)|^2 g_n(s)}\sqrt{|f_n(x-t)|^2 g_n(t)}\sqrt{g_n(s)g_n(t)}dsdt \\
&=|(f_n\ast g_n)(x)|^2,
\end{align*}
which proves the lemma. 

\end{proof}

For arbitrary $f=\{f_n\} \in \cL$ and $\mu \in M_0(\RI)$ such that the density function of $p_n(\mu)$ is given by $g_n$ for all $n$, 
the convolution product of $\mu$ and $f$, denoted by $\mu \ast f$, is defined to be $\{ f_n \ast g_n\}$.
Hence by Lemma \ref{Lemconv}, $\mu \ast f$ can be expressed as the complex linear combination of $\cLP$. It follows that $\mu\ast f \in \cL$.
In the case $f\in L^2(\RI)$, choose $\{f_n\}\in\cL$ a representative of $f$ and the convolution product of $\mu$ and $f$ is defined by $[\{f_n \ast g_n\}]$.
Since by Young's inequality, $\| \mu_n \ast f_n \| \le \|\mu_n\| \|f_n\|$. By taking the limit of this inequality, we have  
$\| \mu \ast f \| \le \|\mu\| \|f\|$. Thus the definition of $\mu \ast f$ is independent of the choice of the representative.

\section{\bf Fourier transform on $L^2(\RI)$}
\setcounter{equation}{0}
 
Let $\RIO = \{ (x_1, x_2, \cdots): \mathrm{~there~exists~} N, x_n=0~~\mathrm{for~all~} n>N \}.$
The topology of $\RIO$ is defined by seminorms $\{p_x\}_{x\in\RI}$:
$\displaystyle p_x(y) = \left| \sum_{n=1}^{\I} x_n y_n \right| ~~(y \in \RIO).$
Since there exists $N$ such that $y_n=0$ for $n>N$, the right hand side makes sense. $\RIO$ is homeomorphic to the 
countable direct sum of $\R$.  

Let $a=\{a_n\}$ be a positive sequence and $H_a$ be a Hilbert space defined by \\
$\displaystyle H_a=\{ x =\{x_n\}_{n=1}^{\infty} ; \sum_{n=1}^{\infty} a^2_n x^2_n < \infty \}$
equipped with the inner product $\displaystyle \langle x,y \rangle_a = \sum_{n=1}^{\infty} a^2_n x_n y_n$ and the norm 
$\|x\|_a = \sqrt{\la x,x \ra_a}$ . 

The following assertions are special cases of Minlos' theorem and Sazanov' theorem. 
For a thorough treatment we refer the reader to \cite{Bourbaki}, \cite{Yamasaki85}. 
\begin{Th}[Minlos]\label{Minlos}
For all continuous function $\phi$ on $\RIO$ satisfying
\begin{equation}\label{pos def}
\sum_{i,j=1}^{n} \alpha_i \bar{\alpha_j} \phi(x_i-x_j) \ge 0
\end{equation}
for all $\alpha_1,\cdots,\alpha_n\in\C, x_1,\cdots,x_n\in\RIO(n=1,2,\cdots)$,
there exists a bounded positive Borel measure $\mu$ on $\RI$ such that 
\[
\phi(x)=\int_{\RI}e^{
2\pi \sqrt{-1} \la x,y \ra
}\mu(dy),\la x,y \ra=\sum_{n=1}^{\I} x_n y_n .  
\]
\end{Th}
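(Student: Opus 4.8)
The plan is to reduce the assertion to a finite-dimensional fact — Bochner's theorem — combined with a projective-limit argument via the Kolmogorov extension theorem. In this concrete setting the characteristic difficulty of Minlos' theorem (countable additivity of the resulting cylinder measure on the dual of a nuclear space) does not really intervene, since $\RI$ together with the coordinate projections $p_n$ is already the projective limit of the spaces $\R^n$ and Kolmogorov's theorem applies to it verbatim. Before starting I would dispose of the trivial case: taking $n=1$ in \eqref{pos def} gives $\phi(0)\ge 0$, and if $\phi(0)=0$ then positive-definiteness of the $2\times2$ Hermitian matrix with diagonal entries $\phi(0)$ and off-diagonal entries $\phi(x),\phi(-x)$ forces $\phi\equiv 0$, so $\mu=0$ does the job; hence I may assume $\phi(0)>0$.

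For each $n$ let $\iota_n\colon\R^n\to\RIO$ be the continuous embedding $\iota_n(x)=(x_1,\dots,x_n,0,0,\dots)$ and put $\phi_n=\phi\circ\iota_n$. Then $\phi_n$ is continuous on $\R^n$, and evaluating \eqref{pos def} at points $\iota_n(x_i)$ shows $\phi_n$ is positive-definite. Bochner's theorem then yields a unique bounded positive Borel measure $\mu_n$ on $\R^n$ with
\[
\phi_n(x)=\int_{\R^n}e^{2\pi\sqrt{-1}\sum_{k=1}^{n}x_k y_k}\,\mu_n(dy)\qquad(x\in\R^n),
\]
and in particular $\mu_n(\R^n)=\phi_n(0)=\phi(0)$ for all $n$.

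Next I would verify that $\{\mu_n\}$ is consistent. Writing $q_n\colon\R^{n+1}\to\R^n$ for the coordinate projection, one has $\phi_{n+1}(x_1,\dots,x_n,0)=\phi_n(x)$, so the Fourier transform of the image measure $q_n(\mu_{n+1})$ agrees with that of $\mu_n$; by the uniqueness part of Bochner's theorem, $q_n(\mu_{n+1})=\mu_n$. Thus $\{\mu_n\}$ is a projective family of finite Borel measures on the Polish spaces $\R^n$ with common mass $\phi(0)$, and the Kolmogorov extension theorem (as already used in Section 2) produces a bounded positive Borel measure $\mu$ on $\RI$ with $p_n(\mu)=\mu_n$ for every $n$ and $\mu(\RI)=\phi(0)$.

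Finally I would identify $\phi$ with the Fourier transform of $\mu$: given $x\in\RIO$, there is $N$ with $x_k=0$ for $k>N$, so $\la x,y\ra=\sum_{k=1}^{N}x_k y_k$ depends on $y$ only through $p_N(y)$, and the image-measure formula gives
\[
\int_{\RI}e^{2\pi\sqrt{-1}\la x,y\ra}\,\mu(dy)=\int_{\R^N}e^{2\pi\sqrt{-1}\sum_{k=1}^{N}x_k z_k}\,\mu_N(dz)=\phi_N(x_1,\dots,x_N)=\phi(x),
\]
which is the required representation. I do not anticipate a serious obstacle: the continuity hypothesis on $\phi$ is used only to invoke Bochner's theorem on each $\R^n$, and the single point deserving care is the passage from the cylinder data $\{\mu_n\}$ to a genuine countably additive measure on $\RI$, which here is precisely the content of Kolmogorov's theorem rather than of the deeper nuclear-space version of Minlos' theorem.
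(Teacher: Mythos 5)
Your argument is correct. Note, however, that the paper does not prove this statement at all: it is quoted as a special case of the general Minlos theorem for nuclear spaces, with references to Bourbaki and Yamasaki, so your proposal is a genuinely different (and more elementary) route. You exploit the fact that here the measure is asserted to live on all of $\RI$, which is the full projective limit of the Polish spaces $\R^n$ under the projections $p_n$: restricting $\phi$ to the finite-dimensional subspaces $\iota_n(\R^n)\subset\RIO$ (on which the seminorm topology induces the Euclidean one, so $\phi_n$ is continuous), applying Bochner's theorem, checking consistency via uniqueness of Fourier transforms, and invoking Kolmogorov's extension theorem — exactly as the paper itself does in Section 2 for the systems $\{h_n\}$ — gives the measure $\mu$, and the identification of $\Hat{\mu}$ with $\phi$ on $\RIO$ is immediate since $\la x,y\ra$ depends only on $p_N(y)$. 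This buys a self-contained proof showing that nuclearity plays no role in this particular statement; what the paper's citation route buys is the genuinely deeper nuclear-space version (a measure concentrated on the dual $S'$ of a nuclear subspace $S\subset\RI$ for $S$-continuous $\phi$), which is what is actually needed later in Theorem \ref{Minlos lem} and for the support statements on $\mathcal{D}'[0,T]$; your Bochner--Kolmogorov argument would not yield those. Two minor points: your trivial-case discussion for $\phi(0)=0$ is harmless but unnecessary (Kolmogorov handles total mass $\phi(0)\ge 0$ after the obvious normalization, with $\mu=0$ when $\phi(0)=0$), and it is worth saying explicitly that the product $\sigma$-algebra on $\RI$ coincides with the Borel $\sigma$-algebra of the metric $d$ in \eqref{topology RI} because the product is countable and each factor is second countable, so the extension is indeed a Borel measure in the sense required by the statement.
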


Such $\phi$ is called the characteristic function of $\mu$, and denoted by $\Hat{\mu}$.
A function satisfying \eqref{pos def} is said to be positive definite. This theorem also holds for 
a continuous positive definite function on a nuclear space $S \subset \RI$ as the characteristic function of the measure on its dual space $S^{'}$. We will
establish the relation between the inner product of $L^2(\RI)$ and 
positive definite functions on $\RI$. 

\begin{Th}[Sazanov]\label{Sazanov-Minlos}
Suppose that $a=\{a_n\}$ and $b=\{b_n\}$ are positive sequences satisfying \\
$\displaystyle \sum_{n=1}^{\infty} a^2_n b^2_n < \infty$. 
For all continuous positive definite function $\phi$ on $H_b$, there exists a positive bounded Borel measure $\mu$ on $H_a$
whose characteristic function is given by $\phi$. 
\end{Th}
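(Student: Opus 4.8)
The plan is to derive Sazanov's theorem from Minlos' theorem (Theorem \ref{Minlos}) by exhibiting, for a given continuous positive definite $\phi$ on $H_b$, a nuclear space $S$ sitting inside $\RI$ such that $\phi$ restricts to a continuous positive definite function on $S$, and then identifying the dual $S'$ with (a space containing) $H_a$ so that the measure produced by Minlos actually lives on $H_a$. The natural candidate for $S$ is the nuclear Fréchet space $S = \bigcap_{c} H_c$, the intersection taken over all positive sequences $c$ decaying fast enough; more concretely one can take the countable family of Hilbert norms $\|\cdot\|_{b^{(m)}}$ where $b^{(m)}_n = b_n n^m$, whose intersection is nuclear precisely because of the summability hypothesis $\sum a_n^2 b_n^2 < \infty$ plus the polynomial factors. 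One then checks $\phi$ is continuous on $S$ (immediate, since the $H_b$-norm is one of the seminorms defining $S$) and positive definite on $S$ (inherited from $H_b \supset S$).

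First I would set up the chain of spaces $S \hookrightarrow H_a \hookrightarrow \RI$ with the embedding $H_a \hookrightarrow \RI$ being continuous (coordinatewise convergence is weaker than $\|\cdot\|_a$-convergence), and verify that $S$ is dense in $H_a$ and that the inclusion $S \to H_a$ is continuous; dually this gives $H_a \cong H_a' \hookrightarrow S'$. Then I would apply Minlos' theorem in the nuclear-space form stated right after Theorem \ref{Minlos} to obtain a bounded positive Borel measure $\nu$ on $S'$ with $\Hat{\nu} = \phi$. The crux is then to show $\nu$ is concentrated on the subset $H_a \subset S'$, i.e. $\nu(S' \setminus H_a) = 0$, so that $\nu$ descends to a Borel measure $\mu$ on $H_a$ with the same characteristic function.

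The concentration step is where the Sazanov-type hypothesis $\sum_n a_n^2 b_n^2 < \infty$ does its real work, and this is the main obstacle. The standard mechanism is: continuity of $\phi$ on $H_b$ gives, for every $\varepsilon > 0$, a $\delta > 0$ with $|1 - \phi(x)| < \varepsilon$ whenever $\|x\|_b < \delta$; feeding a Gaussian-type test sequence supported on finitely many coordinates into this estimate and integrating against $\nu$ produces a uniform bound $\int_{S'} \sum_{n=1}^N a_n^2 y_n^2 \, \nu(dy) \le C$ independent of $N$, where $C$ depends on $\varepsilon$, $\delta$, and $\sum_n a_n^2 b_n^2$. Letting $N \to \infty$ via monotone convergence shows $\sum_n a_n^2 y_n^2 < \infty$ for $\nu$-a.e. $y$, i.e. $\nu$ lives on $H_a$. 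Restricting $\nu$ to $H_a$ and pushing forward gives the desired $\mu$; that its characteristic function is $\phi$ follows because the exponentials $e^{2\pi\sqrt{-1}\la x, y\ra}$ for $x \in S$ separate points and the integral is unchanged by the restriction. I would also remark that the measure is unique, since its characteristic function determines it, though uniqueness is not asserted in the statement.
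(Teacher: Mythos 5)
You should first note that the paper does not prove this statement at all: it is quoted as a special case of Sazonov's theorem, with the proof delegated to Bourbaki and Yamasaki, so your proposal has to be judged against the standard argument rather than against anything in the text. Your overall route --- Bochner--Minlos on a nuclear space followed by a concentration estimate extracted from the $\|\cdot\|_b$-continuity of $\phi$ together with $\sum_n a_n^2b_n^2<\infty$ --- is indeed the standard one. The auxiliary space $S$ is dispensable, since the restriction of $\phi$ to $\RIO$ is already continuous there (the inductive topology of $\RIO$ is finer than the restriction of the $\|\cdot\|_b$-topology), so the paper's Theorem \ref{Minlos} directly yields a bounded positive measure on $\RI$ with the right finite-dimensional characteristic functions. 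Two smaller inaccuracies in your setup: nuclearity of $S=\bigcap_m H_{b^{(m)}}$ comes from the polynomial weights alone and has nothing to do with $\sum_n a_n^2b_n^2<\infty$; and $H_a\subset S'$ can fail outright when $1/(a_nb_n)$ grows faster than any polynomial --- this is not fatal, because all you need is that the measure is carried by $H_a\cap S'$, but it shows the detour through $S$ buys nothing.

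The genuine gap is in the concentration step. The claimed uniform bound $\int_{S'}\sum_{n=1}^{N}a_n^2y_n^2\,\nu(dy)\le C$ cannot follow from continuity of $\phi$ and is false in general: a measure can have a characteristic functional continuous on $H_b$ and yet have no second moments (already a single Cauchy-distributed coordinate, all other coordinates $0$, gives $\int a_1^2y_1^2\,\nu(dy)=\infty$ although $\nu$ is trivially carried by $H_a$). What the Gaussian-averaging device actually produces is a truncated estimate: choose $\delta>0$ with $\|\nu\|-\mathrm{Re}\,\phi(x)<\varepsilon$ for $\|x\|_b<\delta$, integrate the identity $\|\nu\|-\mathrm{Re}\,\hat{\nu}(x)=\int(1-\cos 2\pi\la x,y\ra)\,\nu(dy)$ against the centered Gaussian probability measure on $\R^N$ with covariance $t\,\mathrm{diag}(a_1^2,\dots,a_N^2)$, and use Chebyshev for the event $\|x\|_b\ge\delta$; this gives $\int\bigl(1-e^{-2\pi^2t\sum_{n\le N}a_n^2y_n^2}\bigr)\nu(dy)\le\varepsilon+2\|\nu\|\,t\,\delta^{-2}\sum_{n}a_n^2b_n^2$. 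Letting $N\to\I$ by monotone convergence and then $t\to 0$ yields $\nu(\{y:\sum_n a_n^2y_n^2=\I\})\le\varepsilon$ for every $\varepsilon>0$, which is the desired concentration on $H_a$; your moment bound is both unobtainable and unnecessary. With this replacement the argument goes through; note finally that identifying $\hat{\mu}$ with $\phi$ on all of $H_b$ (rather than on $\RIO$) needs an extra convergence-in-measure step, since $\la x,y\ra$ need not converge absolutely for $x\in H_b$, $y\in H_a$ --- the same device the paper later uses to define $E(\rho)$.
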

     
Set $f=\{f_n\}\in \cL$ and $h=(h_1,h_2,\cdots)\in\RI$. A translation operator $\tau_h$ on $\cL$ is defined by
$\displaystyle \tau_h f =\{ f_n(x_1-h_1,\cdots,x_n-h_n)\}$ .
By definition, it follows that 
\begin{equation}\label{shift inv}
\|\tau_h f\|_{\cl^2}=\|f\|_{\cl^2}.
\end{equation}

Let $E\subset\RI$ be a complete metric space.  
$f\in \cL$ is said to be $E$-shift continuous if 
$\displaystyle \lim_{n \to \I} \|\tau_{h_n} f - f \|_{\cl^2}=0$ 
whenever a sequence $\{h_n\}$ in $E$ converges to limit $0$.
Let $S\subset\RI$ be a nuclear space.
$f\in \cL$ is said to be $S$-shift continuous if $\displaystyle \lim_{\alpha} \|\tau_{h_{\alpha}} f - f \|_{\cl^2}=0$
whenever a net $\{ h_{\alpha} \}$ in $S$ converges to $0$.

A translation operator $\tau_h$ on $L^2(\RI)$ is defined by 
$\displaystyle\tau_h [f] = [\tau_h f] (f\in \cL).$
Equation \eqref{shift inv} makes this definition possible. Shift continuity of $L^2(\RI)$ is defined as well as $\cL$. 
We will denote by $\LL$ the totality of $\RIO$-shift continuous elements of $L^2(\RI)$. \\
\\

\begin{Exam}
If a bounded positive Borel measure $\mu$ is of the form
\[
\mu=\prod_{n=1}^{\I} f_n dx_n,~~ f_n \in L^1(\R^n), ~~f_n \ge 0, ~~\int_{\R}f_n(x)dx=1,
\]
then the square root of $\mu$ is $\RIO$-shift continuous. 
In fact, letting $g_n=\mathcal{F}(\sqrt{f_n})$, $\displaystyle \int_{\R}|g_n(x)|^2dx=1$ follows by Parseval's equation. Thus 
$\displaystyle \lambda=\prod_{n=1}^{\I} |g_n|^2 dx_n$ also defines a bounded positive Borel measure on $\RI$.
We obtain
\begin{align*}
\|\tau_h &\sqrt{\mu} - \sqrt{\mu}\|_{\cl^2}^2 
=\int_{\R^n}\left|\sqrt{f_1(x-h_1)\cdots f_n(x-h_n)}-\sqrt{f_1(x)\cdots f_n(x)}\right|dx_1\cdots dx_n \\
&=\int_{\R^n}|e^{2\pi \sqrt{-1}\la \xi,h \ra_n}-1|^2 |g_1(\xi)|^2\cdots |g_n(\xi)|^2 d\xi_1\cdots d\xi_n \\
&=\int_{\RI}|e^{2\pi \sqrt{-1}\la \xi,h \ra}-1|^2 \lambda(d\xi).
\end{align*}
Here $h=(h_1,\cdots,h_n,0,\cdots)$. Because the characteristic function on $\RI$ is continuous 
function on $\RIO$, so is $\sqrt{\mu}$. 
\end{Exam}

\begin{Defi}
Assume that $f=\{f_n\} \in \cL$ is $\RIO$-shift continuous. Then, a Fourier transform of $f$ is defined by $\{\Hat{f}_n\}$ and denoted by $\Hat{f}$ or $\mathcal{F} f$
Here, 
\[
\Hat{f}_n(\xi)=\LIM_{L\to\I}\int_{|x|_n\le L} e^{ 2\pi \sqrt{-1}\la x, \xi \ra_n} f(x)dx, \\
\la x,\xi \ra_n =\sum_{j=1}^{n}x_j \xi_j, |x|_n=\sqrt{\la x,x \ra_n}. 
\] 
\end{Defi}

The sequence $\{\Hat{f}_n\}$ is not always included in $\cL$. However, it is possible to give a definition of
the square of $\Hat{f}=\{\Hat{f}_n\}$ as a bounded positive Borel measure.

\begin{Prop}\label{weak com}
Assume that $f=\{f_n\} \in \cL$ is $\RIO$-shift continuous. Then, there exists a 
bounded positive Borel measure $\mu$ on $\RI$ such that
\begin{equation}\label{weak * com}
\lim_{n\to\I}\int_{\R^n}e^{ 2\pi \sqrt{-1}\la x,\xi \ra_n} |\Hat{f}_n(\xi)|^2d\xi
=\int_{\RI}e^{ 
2\pi \sqrt{-1} \la x,y \ra
}\mu(dy)
\end{equation}
for all $x \in \RIO$.
\end{Prop}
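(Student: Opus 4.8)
The plan is to identify the left-hand side of \eqref{weak * com}, as a function of $x\in\RIO$, with the characteristic function of the measure we are after, and then to invoke Minlos' theorem (Theorem \ref{Minlos}). Fix $x\in\RIO$ and write $x|_m=(x_1,\dots,x_m)$. Each component $f_m$ lies in $L^2(\R^m)$, so $\Hat f_m$ is the Plancherel transform; combining the shift formula $\mathcal F(\tau_x f_m)(\xi)=e^{2\pi\sqrt{-1}\la x,\xi\ra_m}\,\Hat f_m(\xi)$ with Parseval's equality gives, for every $m$,
\[
\int_{\R^m}e^{2\pi\sqrt{-1}\la x,\xi\ra_m}\,|\Hat f_m(\xi)|^2\,d\xi=\int_{\R^m}f_m(y-x|_m)\,\overline{f_m(y)}\,dy=\int_{\R^m}(\tau_x f)_m(y)\,\overline{f_m(y)}\,dy ,
\]
where $(\tau_x f)_m$ is the $m$-th entry of $\tau_x f\in\cL$. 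The next step is to show this tends, as $m\to\I$, to $\la\tau_x f,f\ra_{\cl^2}=\bigl((\tau_x f)\cdot\bar f\bigr)(\RI)$. By the construction \eqref{product L2} of the product, the total mass $\bigl((\tau_x f)\cdot\bar f\bigr)(\RI)$ is the integral over $\R^m$ of the density of the projection $p_m\bigl((\tau_x f)\cdot\bar f\bigr)$, which unwinds — via the monotone-convergence step behind \eqref{product L2} — to $\lim_{k\to\I}\int_{\R^{m+k}}(\tau_x f)_{m+k}\,\overline{f_{m+k}}\,dy$, and this holds for every $m$; for superprojective systems the integrals $\int_{\R^m}(\tau_x f)_m\overline{f_m}\,dy$ are monotone in $m$, hence converge to that common value, and the general case $f\in\cL$ follows by linearity using Proposition \ref{L2 decomposition}. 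Thus the left side of \eqref{weak * com} exists for all $x\in\RIO$ and equals $\phi(x):=\la\tau_x f,f\ra_{\cl^2}$.

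It remains to check that $\phi$ is continuous and positive definite on $\RIO$; Minlos' theorem then produces a bounded positive Borel measure $\mu$ on $\RI$ with characteristic function $\phi$, which is precisely \eqref{weak * com}. For positive definiteness, \eqref{shift inv} says each $\tau_h$ is a $\C$-linear isometry of $(\cL,\|\cdot\|_{\cl^2})$, so by polarization it preserves the sesquilinear form $\la\cdot,\cdot\ra_{\cl^2}$; together with $\tau_{x_i-x_j}=\tau_{-x_j}\tau_{x_i}$ this gives $\phi(x_i-x_j)=\la\tau_{x_i}f,\tau_{x_j}f\ra_{\cl^2}$, whence
\[
\sum_{i,j=1}^{n}\alpha_i\,\overline{\alpha_j}\,\phi(x_i-x_j)=\Bigl\|\sum_{i=1}^{n}\alpha_i\,\tau_{x_i}f\Bigr\|_{\cl^2}^{2}\ge 0 .
\]
For continuity, $\phi(x)-\phi(x')=\la\tau_{x-x'}f-f,\ \tau_{-x'}f\ra_{\cl^2}$, so the Schwarz inequality for $\la\cdot,\cdot\ra_{\cl^2}$ (a consequence of Proposition \ref{Sc eq}) and \eqref{shift inv} yield $|\phi(x)-\phi(x')|\le\|\tau_{x-x'}f-f\|_{\cl^2}\,\|f\|_{\cl^2}$, which tends to $0$ as $x'\to x$ in $\RIO$ exactly because $f$ is $\RIO$-shift continuous. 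This is where the hypothesis is used.

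The main obstacle I anticipate is the passage from the finite-dimensional integrals to the infinite-dimensional inner product, i.e.\ the identification $\lim_{m\to\I}\int_{\R^m}(\tau_x f)_m\overline{f_m}\,dy=\la\tau_x f,f\ra_{\cl^2}$: one must control the two nested limits — the defining limit of the product $(\tau_x f)\cdot\bar f$ in \eqref{product L2} and the limit over the dimension $m$ — and justify collapsing them, which rests on the monotonicity built into the weak $L^2$-compatibility condition, first for superprojective systems and then, through Proposition \ref{L2 decomposition}, for general $f\in\cL$. By contrast, the remaining pieces — that $\Hat f_m$ is the Plancherel transform and satisfies Parseval's equality, making the first display legitimate, and the polarization and Schwarz arguments of the second paragraph — are routine.
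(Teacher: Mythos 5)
Your proposal is correct and follows essentially the same route as the paper: set $\phi(x)=\la\tau_x f,f\ra_{\cl^2}$, verify continuity (from $\RIO$-shift continuity) and positive definiteness (from the translation invariance \eqref{shift inv}), apply Minlos' theorem, and identify the left-hand side of \eqref{weak * com} with $\phi(x)$ via Parseval. The only difference is that you spell out the collapse of the nested limits behind $\la\tau_x f,f\ra_{\cl^2}=\lim_{m\to\I}\int_{\R^m}(\tau_x f)_m\overline{f_m}\,dy$, which the paper's proof treats as immediate ``by definition.''
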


\begin{proof}

Set $\phi(x)=\la \tau_x f,f \ra_{\cl^2}(x\in \RIO)$. By assumption, $\phi$ is continuous function on $\RIO$. And \eqref{shift inv} ensures that     
\[
\sum_{i,j=1}^{n}\alpha_i \bar{\alpha_j} \phi(x_i-x_j) 
=\sum_{i,j=1}^{n}\alpha_i \bar{\alpha_j} \la \tau_{x_i}f, \tau_{x_j}f \ra_{\cl^2}
=\|\sum_{i=1}^{n}\alpha_i \tau_{x_i} f\|_{\cl^2} \ge 0.
\]
for all $\alpha_1,\cdots,\alpha_n\in\C, x_1,\cdots,x_n\in\RIO(n=1,2,\cdots)$.
Thus $\phi$ is positive definite. 
From Minlos' theorem \ref{Minlos}, there exists a bounded positive measure on $\RI$
such that 
\[
\la \tau_x f,f \ra_{\cl^2} = \int_{\RI}e^{
2\pi \sqrt{-1} \la x,y \ra
}\mu(dy).
\]
By definition, the left hand side of this equation is equal to
\begin{align*}
& \lim_{n\to\I} \int_{\R^n} f_n(y_1-x_1,\cdots,y_n-x_n) f_n(y_1,\cdots,y_n) dy_1\cdots dy_n \\
&=\lim_{n\to\I}\int_{\R^n}e^{ 2\pi \sqrt{-1} \la x, y \ra_n}|\Hat{f}_n(y)|^2 dy, 
\end{align*}
which proves the proposition. 
\end{proof}

Let $\mu^k_n$ be a bounded positive Borel measure on $\R^n$ defined by  
\[
\mu^k_n(E)=\int_{E}\int_{\R^k} |\widehat{f}_{n+k}(\xi, \xi')|^2d\xi'd\xi
\]
for $E\in\cB(\R^n)$, and $\mu_n$ be a bounded positive Borel measure on $\R^n$
defined by 
\begin{equation}\label{mar meas}
\mu_n(E)=\mu(p_n^{-1}(E)).   
\end{equation}

Substituting $x=(x_1,\cdots,x_n,0,\cdots)\in \RIO$ into \eqref{weak * com} we can assert that
the characteristic function of $\mu_n^k$ converges to that of $\mu_n$ at each point. Thus $\mu_n^k$ converges weakly to 
$\mu_n$ i.e. 
\[
\lim_{k\to\I}\int_{\R^n}f(x)\mu_n^k(dx) = \int_{\R^n}f(x)\mu_n(dx)
\]
for all continuous bounded functions $f$ and $n=1,2,\cdots$.  

Let $\tilde{\mu}^k$ be a bounded positive Borel measure on $\RI$ such that 
\[
\tilde{\mu}^k(p^{-1}_j(E))=\begin{cases}
                   \mu^k_j(E) & \text{$j\le k$}  \\
                   \displaystyle\int_E |\widehat{f}_k(\xi)|^2\delta(\xi')d\xi d\xi' & \text{$j>k$}.
                   \end{cases}
\]
Here $\delta$ means the Dirac delta measure on $\R^{j-k}$.  

At $\xi=(\xi_1,\cdots,\xi_j,0,\cdots)\in\RIO(j\le k)$, it follows that
\[
\Hat{\tilde{\mu}}^k(\xi_1,\cdots,\xi_j,0,\cdots)=\Hat{\mu}_j^k(\xi_1,\cdots,\xi_j).
\]
Thus the characteristic function of $\tilde{\mu}^k$ converges to that of $\mu$
at each point. Since $\RI$ is nuclear space, it follows that $\tilde{\mu}^k$ converges weakly to $\mu$ (see \cite{Bourbaki} for more details).

\begin{Prop}
The measure $\mu$ defined in Proposition \ref{weak com} satisfies 
\begin{equation}\label{con meas}
\lim_{\alpha}\|\tau_{h_{\alpha}}\mu-\mu\|=0
\end{equation}
whenever a net $\{h_\alpha\}$ in $\RIO$ converges to limit $0$.
\end{Prop}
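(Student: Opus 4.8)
\emph{The plan} is to reduce the statement to a single estimate on the finite-dimensional projections and to exploit the weakly convergent approximants $\mu^k_n$ already produced above. Fix $h=\{h_j\}\in\RIO$, so that $h_j=0$ for $j>N$ for some $N$. Translation by $h$ moves only the first $N$ coordinates, so the $n$-th projection of $\tau_h\mu$ is $\tau_{p_n h}\mu_n$, and hence that of the signed measure $\tau_h\mu-\mu$ is $\tau_{p_n h}\mu_n-\mu_n$. By the tightness argument used in the proof of Proposition~\ref{Sc eq} (a finite Borel measure on $\RI$ is determined, together with its total variation norm, by its cylindrical projections),
\[
\|\tau_h\mu-\mu\|=\sup_{n\ge 1}\|\tau_{p_n h}\mu_n-\mu_n\|_{M(\R^n)} .
\]
Since $\mu^k_n\to\mu_n$ weakly (established above), translation is weakly continuous, and the total variation norm is lower semicontinuous under weak convergence, one has $\tau_{p_n h}\mu^k_n-\mu^k_n\to\tau_{p_n h}\mu_n-\mu_n$ weakly and therefore $\|\tau_{p_n h}\mu_n-\mu_n\|_{M(\R^n)}\le\liminf_{k\to\I}\|\tau_{p_n h}\mu^k_n-\mu^k_n\|_{M(\R^n)}$. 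It thus suffices to bound this last quantity uniformly in $n$ and $k$ by a quantity tending to $0$ as $h\to 0$.

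The approximant $\mu^k_n$ has Lebesgue density $q^k_n(\xi)=\int_{\R^k}|\Hat f_{n+k}(\xi,\xi')|^2\,d\xi'$, so $\|\tau_{p_n h}\mu^k_n-\mu^k_n\|_{M(\R^n)}=\int_{\R^n}|q^k_n(\xi-p_n h)-q^k_n(\xi)|\,d\xi$. The modulation--translation identity for the Fourier transform gives $q^k_n(\xi-p_n h)=\int_{\R^k}|\Hat g_{n+k}(\xi,\xi')|^2d\xi'$, where $g_m(x)=e^{-2\pi\sqrt{-1}\la p_n h,x\ra_m}f_m(x)$ satisfies $|g_m|=|f_m|$ pointwise. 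Hence, with $m=n+k$, Fubini's theorem, the identity $|a|^2-|b|^2=(|a|-|b|)(|a|+|b|)$, the Cauchy--Schwarz inequality and Parseval's identity yield
\[
\|\tau_{p_n h}\mu^k_n-\mu^k_n\|_{M(\R^n)}\le\int_{\R^m}\bigl||\Hat g_m|^2-|\Hat f_m|^2\bigr|\,d\zeta\le\bigl(\|\Hat g_m\|_{L^2}+\|\Hat f_m\|_{L^2}\bigr)\|\Hat g_m-\Hat f_m\|_{L^2}=2\|f_m\|_{L^2}\bigl\|(e^{-2\pi\sqrt{-1}\la p_n h,\cdot\ra_m}-1)f_m\bigr\|_{L^2(\R^m)}.
\]

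For $f\in\cLP$, iterating \eqref{weakly L2} shows that $\|f_m\|_{L^2}$ is non-increasing in $m$, hence $\|f_m\|_{L^2}\le\|f_1\|_{L^2}=:M_f$; the same iteration, after integrating out the last $m-(n\wedge N)$ variables, yields for every $m\ge n$ the $k$-independent bound $\bigl\|(e^{-2\pi\sqrt{-1}\la p_n h,\cdot\ra_m}-1)f_m\bigr\|^2_{L^2(\R^m)}\le\int_{\R^{n\wedge N}}|e^{-2\pi\sqrt{-1}\la h,u\ra_{n\wedge N}}-1|^2|f_{n\wedge N}(u)|^2\,du$. (For general $f\in\cL$ one decomposes $f$ via \eqref{L2 dec form} and uses the triangle inequality.) Combining this with the two displays above gives, for every $h\in\RIO$ supported in $\{1,\dots,N\}$,
\[
\|\tau_h\mu-\mu\|\ \le\ 2M_f\max_{1\le n\le N}\Bigl(\int_{\R^n}|e^{-2\pi\sqrt{-1}\la h,u\ra_n}-1|^2|f_n(u)|^2\,du\Bigr)^{1/2},
\]
and letting $h\to 0$ through $\RIO$ with supports contained in a fixed $\R^N$, the right-hand side tends to $0$ by dominated convergence, each integrand being dominated by $4|f_n|^2\in L^1(\R^n)$ and converging to $0$ pointwise; this is \eqref{con meas}.

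\emph{I expect the only genuine obstacle to be the topological bookkeeping.} The estimate above is uniform only over $h$ in a fixed finite-dimensional subspace; to treat a general net converging to $0$ in the seminorm topology of $\RIO$ — whose supports need not be eventually bounded — one should, in addition, split each integral $\int_{\R^n}$ over $\{|u|\le R\}$ and its complement and use the uniform mass bound $\sup_m\|f_m\|_{L^2}<\I$ to control the tail uniformly in the support. The analytic core, namely the inequality $\|\tau_{p_n h}\mu^k_n-\mu^k_n\|\le 2\|f_{n+k}\|_{L^2}\|(e^{-2\pi\sqrt{-1}\la p_n h,\cdot\ra_{n+k}}-1)f_{n+k}\|_{L^2}$, is routine once the weak-limit reduction and the modulation identity are in place.
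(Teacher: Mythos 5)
Your analytic core is sound and is essentially the computation the paper itself performs: weak convergence of the Fourier-side approximants, lower semicontinuity of the variation norm under weak convergence, the modulation--translation identity, and the elementary bound $\int\bigl||\Hat f(\cdot-h)|^2-|\Hat f|^2\bigr|\le 2\|f\|\,\|(1-e^{2\pi\sqrt{-1}\la\cdot,h\ra})f\|$ all appear in the paper's proof (there with the measures $\tilde\mu^k$ on $\RI$ rather than your projections $\mu^k_n$; your identity $\|\tau_h\mu-\mu\|=\sup_n\|\tau_{p_nh}\mu_n-\mu_n\|$ is unproved but standard). The genuine gap is precisely the step you flag and postpone. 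Your final estimate, $\|\tau_h\mu-\mu\|\le 2M_f\max_{1\le n\le N}\bigl(\int_{\R^n}|e^{-2\pi\sqrt{-1}\la h,u\ra_n}-1|^2|f_n(u)|^2du\bigr)^{1/2}$, has a right-hand side whose very form depends on the support size $N$ of $h$, so it only yields continuity of $h\mapsto\tau_h\mu$ at $0$ along nets that eventually lie in a fixed finite-dimensional subspace. The proposition concerns arbitrary nets $h_\alpha\to 0$ in $\RIO$; convergence in the seminorm topology of $\RIO$ neither forces the supports to be eventually bounded nor gives any control of $\|h_\alpha\|_{\ell^2}$, and your sketched repair (splitting $\int_{\R^n}$ over a Euclidean ball $\{|u|\le R\}$ and its complement, ``uniformly in the support'') cannot work as stated: uniform smallness of $|\la h,u\ra_n|$ over $\{|u|\le R\}\subset\R^n$ with $n$ growing is exactly an $\ell^2$-bound on $h$, which is not available.

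What is missing is a single dominating function of $h$ defined on all of $\RIO$ at once. The paper obtains it by letting $k\to\I$ in the same estimate, so that the bound becomes (a constant times) $\int_{\RI}(1-\cos 2\pi\la x,h\ra)\,|f|^2(dx)$, i.e.\ an expression through the real part of the characteristic function of the limit measure $|f|^2$ on $\RI$; continuity of this function of $h$ at $0$ on $\RIO$ (which rests on tightness of $|f|^2$, hence on compact sets of the form $\prod_j[-a_j,a_j]$, not on Euclidean balls) then gives the statement for arbitrary nets. Note that your level-$(n\wedge N)$ bound keeps $|f_{n\wedge N}|^2$, which dominates the density of $p_{n\wedge N}(|f|^2)$, so it is weaker than the paper's $k\to\I$ bound and, more importantly, it is not a function of $h$ alone. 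To close the argument you should pass to the limit measure as the paper does and then prove (or cite) the continuity at $0$ on $\RIO$ of $h\mapsto\int_{\RI}(1-\cos 2\pi\la x,h\ra)|f|^2(dx)$; without that final step your proof covers only sequences, or nets with eventually bounded support.
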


We call a measure satisfying \eqref{con meas} the $\RIO$-shift continuous measure.

\begin{proof}
Fix $h\in\RIO$. Since $\tau_h\tilde{\mu}^k-\tilde{\mu}^k$ converges weakly to
$\tau_h\mu-\mu$, it follows that
\begin{equation}\label{Var}
\| \tau_h\mu-\mu \| \le \liminf_{k\to\I} \| \tau_h \tilde{\mu}^k-\tilde{\mu}^k \| 
\end{equation}
(see for instance \cite{Varadarajan}).
By definition, it follows that
\[
\| \tau_h \tilde{\mu}^k-\tilde{\mu}^k \|=\int_{\R^k} 
\left||\Hat{f}_k(\xi_1-h_1,\cdots,\xi_k-h_k)|^2-|\Hat{f}_k(\xi_1,\cdots,x_k)|^2\right|
d\xi_1\cdots d\xi_k .
\]
Since 
\[
\int_{\R^n}\left||f(x)|^2-|g(x)|^2\right|dx \le (\|f\|_{L^2(\R^n)}+\|g\|_{L^2(\R^n)})\|f-g\|_{L^2(\R^n)}
\]
for all $f, g\in L^2(\R^n)$, it follows that 
\begin{align*}
& \|\tau_h \tilde{\mu}^k-\tilde{\mu}^k \| \\ 
&\le 2\sqrt{\int_{\R^k}|\Hat{f}_k(\xi)|^2d\xi} \sqrt{\int_{\R^k}|\Hat{f}_k(\xi_1-h_1,\cdots,\xi_k-h_k)-\Hat{f}_k(\xi_1,\cdots,\xi_k)|^2d\xi_1\cdots d\xi_k} \\
&=2\sqrt{
\int_{\R^k}|f_k(x)|^2dx
}\sqrt{
\int_{\R^k}|(1-e^{
2\pi \sqrt{-1}\la x, h\ra_k 
}
f_k(x)|^2dx.
}
\end{align*}
At $h=(h_1,\cdots,h_j,0,\cdots)\in\RIO$, it follows that

\begin{align*}
\lim_{k\to\I} \int_{\R^k} & |(1- e^{2\pi \sqrt{-1}\la x, h\ra_k} f_k(x)|^2dx \\
&=2\lim_{k\to\I}\int_{\R^n}(1-\cos 2\pi \la x, h\ra_n)\int_{\R^k}|f_{n+k}(x, x')|^2dx'dx \\
&=2\int_{\RI}(1-\cos 2\pi \la x, h\ra)|f|^2(dx).
\end{align*}

Thus \eqref{Var} is bounded by 
\begin{equation}\label{real part cha}
4\|f\|_{L^2} \int_{\RI}(1-\cos 2\pi \la x, h\ra)|f|^2(dx)
\end{equation}
at each $y\in\RIO$.
Since \eqref{real part cha} is the real part of the characteristic function of the square of $f$,
it is continuous function on $\RIO$, which prove the theorem. 
\end{proof}

Proposition \ref{weak com} implies that there exists a sequence of 
$L^1$-functions $\{h_n\}$ which satisfies \eqref{L1 com}, and
\begin{equation}\label{weak* con}
\lim_{k\to\I}\int_{\R^n}g(x)\left(
\int_{\R^k}|\Hat{f}_{n+k}(x,x')|^2dx'
\right)dx
=\int_{\R^n}g(x)h_n(x)dx
\end{equation}
for all bounded continuous functions $g$ on $\R^n$.

A deeper discussion make it possible to improve the convergence of \eqref{weak* con}.
In fact, it follows that
\begin{equation}\label{L1 conv}
\lim_{k\to\I}\int_{\R^n}
\left|
h_n(x)-\int_{\R^k}|\Hat{f}_{n+k}(x,x')|^2dx'
\right|dx=0.
\end{equation}
for $n=1,2,\cdots$.
Moreover, \eqref{L1 conv} is equivalent to the $\RIO$ shift continuity of $f\in\cL$.
It shows that a totality of square roots of measures satisfying \eqref{L1 conv} forms 
a linear subspace of $\cL$ and the squares of such ones are $\RIO$-shift continuous measures. A detailed proof will appear in elsewhere.

We call the measure defined by Proposition \ref{weak com} the square of $\Hat{f}$.
For $f,g\in\cL$, the product of $\Hat{f}$ and $\bar{\Hat{g}}$ is defined by 
\[
\frac{|\Hat{f}+\Hat{g}|^2-|\Hat{f}-\Hat{g}|^2+\sqrt{-1}|\Hat{f}+\sqrt{-1}\Hat{g}|^2-
\sqrt{-1}|\Hat{f}-\sqrt{-1}\Hat{g}|^2
}{4}
\]
and denoted by $\Hat{f}\cdot\bar{\Hat{g}}$.

\begin{Prop}
Assume that $f,g\in\cL$ and both of them are $\RIO$-shift continuous measures. 
Then we have
\begin{equation}\label{Sch 2}
|\Hat{f}\cdot\bar{\Hat{g}}(E)| \le \ssqrt{|\Hat{f}|^2(E)}\ssqrt{|\Hat{g}|^2(E)}
\end{equation}
for all $E\in\cB(\RI)$.
\end{Prop}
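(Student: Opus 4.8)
The plan is to follow the pattern of the proof of Proposition~\ref{Sc eq}: establish \eqref{Sch 2} first for cylinder sets $E=p_n^{-1}(E_n)$, then for compact sets, and finally for arbitrary $E\in\cB(\RI)$ by tightness. Two reductions set this up. First, by hypothesis $f$ and $g$ are $\RIO$-shift continuous (so \eqref{L1 conv} holds for them), and, as remarked after \eqref{L1 conv}, the $\RIO$-shift continuous elements of $\cL$ form a linear subspace; hence $f+\varepsilon g$ is $\RIO$-shift continuous for each $\varepsilon\in\{1,-1,\sqrt{-1},-\sqrt{-1}\}$, the measure $|\widehat{f+\varepsilon g}|^2$ of Proposition~\ref{weak com} is defined, \eqref{L1 conv} applies to it, and $\mathcal{F}(f+\varepsilon g)$ has $m$-th component $\Hat{f}_m+\varepsilon\Hat{g}_m$. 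Second, exactly as in Proposition~\ref{Sc eq}, it suffices to bound the value $|\Hat{f}\cdot\bar{\Hat{g}}(E)|$ for every Borel $E$: for a countable Borel partition $E=\bigcup_i E_i$ one then gets, by the Cauchy--Schwarz inequality for sequences, $\sum_i|\Hat{f}\cdot\bar{\Hat{g}}(E_i)|\le\sum_i\sqrt{|\Hat{f}|^2(E_i)}\sqrt{|\Hat{g}|^2(E_i)}\le\sqrt{|\Hat{f}|^2(E)}\sqrt{|\Hat{g}|^2(E)}$, and taking the supremum over partitions recovers \eqref{Sch 2}.

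For a cylinder set $E=p_n^{-1}(E_n)$ I would combine the definition of $\Hat{f}\cdot\bar{\Hat{g}}$ by polarization, the pointwise identity $z\bar w=\tfrac14\sum_{\varepsilon}\varepsilon\,|z+\varepsilon w|^2$ (sum over $\varepsilon\in\{1,-1,\sqrt{-1},-\sqrt{-1}\}$), and \eqref{L1 conv} applied to each $f+\varepsilon g$, to obtain
\[
\Hat{f}\cdot\bar{\Hat{g}}\bigl(p_n^{-1}(E_n)\bigr)=\lim_{k\to\I}\int_{E_n}\!\int_{\R^k}\Hat{f}_{n+k}(\xi,\xi')\,\overline{\Hat{g}_{n+k}(\xi,\xi')}\,d\xi'\,d\xi .
\]
The Cauchy--Schwarz inequality in $L^2(E_n\times\R^k)$ bounds the modulus of the $k$-th integral by $\sqrt{\mu_n^k(E_n)}\,\sqrt{\nu_n^k(E_n)}$, where $\mu_n^k(E_n)=\int_{E_n}\int_{\R^k}|\Hat{f}_{n+k}|^2$ and $\nu_n^k(E_n)=\int_{E_n}\int_{\R^k}|\Hat{g}_{n+k}|^2$; by \eqref{L1 conv} for $f$ and for $g$ these tend to $|\Hat{f}|^2(p_n^{-1}(E_n))$ and $|\Hat{g}|^2(p_n^{-1}(E_n))$, giving the estimate on cylinders. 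For a compact $K\subset\RI$ one uses $K=\bigcap_n p_n^{-1}(p_n(K))$ with the cylinders decreasing in $n$ (see \cite{Bourbaki}) and each $p_n(K)$ compact, hence Borel, so that continuity from above of the three finite measures reduces the estimate on $K$ to the cylinder case. Finally, for arbitrary $E\in\cB(\RI)$ and $\epsilon>0$, tightness of the finite measure $|\Hat{f}\cdot\bar{\Hat{g}}|+|\Hat{f}|^2+|\Hat{g}|^2$ on the complete separable metric space $\RI$ (see \cite{Parthasarathy}) gives a compact $K\subset E$ with $(|\Hat{f}\cdot\bar{\Hat{g}}|+|\Hat{f}|^2+|\Hat{g}|^2)(E\setminus K)<\epsilon$, whence $|\Hat{f}\cdot\bar{\Hat{g}}(E)|\le|\Hat{f}\cdot\bar{\Hat{g}}(K)|+\epsilon\le\sqrt{|\Hat{f}|^2(K)}\sqrt{|\Hat{g}|^2(K)}+\epsilon\le\sqrt{|\Hat{f}|^2(E)}\sqrt{|\Hat{g}|^2(E)}+\epsilon$; letting $\epsilon\downarrow0$ and invoking the first paragraph finishes the argument.

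The step I expect to be the main obstacle is the cylinder case, and specifically the displayed limit identity for $\Hat{f}\cdot\bar{\Hat{g}}$. Since $\{\Hat{f}_n\}$ need not belong to $\cL$, one cannot repeat the direct computation of Proposition~\ref{Sc eq}; instead one must pass through the finite-dimensional approximants $\mu_n^k$ and use that they converge to the marginals of $|\Hat{f}|^2$ in $L^1(\R^n)$ — that is, in total variation on each $\R^n$ — and not merely weakly. This stronger convergence is exactly what \eqref{L1 conv} supplies, and it in turn rests on the $\RIO$-shift continuity hypothesis; one also has to be slightly careful that the linear-subspace property makes \eqref{L1 conv} available simultaneously for $f$, $g$, and all four combinations $f+\varepsilon g$. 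Once these convergence facts are in hand, the Cauchy--Schwarz estimate and the passage from cylinders to compact and then to general Borel sets are routine and parallel Proposition~\ref{Sc eq}.
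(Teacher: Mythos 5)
Your proposal is correct and follows essentially the same route as the paper: the paper's proof likewise uses \eqref{L1 conv} to justify the cylinder-set identity $\Hat{f}\cdot\bar{\Hat{g}}(p_n^{-1}(E_n))=\lim_{k\to\I}\int_{E_n}\int_{\R^k}\Hat{f}_{n+k}\overline{\Hat{g}_{n+k}}\,d\xi'\,d\xi$ and then repeats the cylinder--compact--tightness argument of Proposition \ref{Sc eq}. Your additional details (polarization over $\varepsilon\in\{1,-1,\sqrt{-1},-\sqrt{-1}\}$, the linear-subspace remark making \eqref{L1 conv} available for $f+\varepsilon g$, and the partition step for the total variation) simply make explicit what the paper leaves implicit.
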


\begin{proof}
Equation \eqref{L1 conv} ensures that
\[
\Hat{f}\cdot\bar{\Hat{g}}(p_n^{-1}(E_n))=\lim_{k\to\I}\int_{E_n}\int_{\R^k}
\Hat{f}_{n+k}(\xi,\xi')\overline{\Hat{g}_{n+k}(\xi,\xi')}d\xi' d\xi .
\]

Thus we obtain \ref{Sch 2} as in the proof of Proposition\ref{Sc eq}.
\end{proof}

Let $\cLF$ be the totality of the Fourier transform of $\RIO$-shift continuous square roots of measures.  
A sesquilinear form $\la \cdot,\cdot\ra_{\cfl^2}:\cLF\times\cLF \mapsto \C$
is defined by $< \Hat{f},\Hat{g} >_{\cfl^2}=\Hat{f}\cdot \bar{\Hat{g}}(\RI)$
and a seminorm is defined by
$\|\Hat{f}\|_{\cfl^2}=\sqrt{< \Hat{f},\Hat{f}>_{\cfl^2}}$.  
By definition, $\|\Hat{f}\|_{\cfl^2}=\|f\|_{\cl^2}$
holds.

Let $f,g\in\cL$ are $\RIO$-shift continuous measures. $\Hat{f}$ is said to be equivalent to 
$\Hat{g}$ if $\|\Hat{f}-\Hat{g}\|_{\cfl^2}=0$. It is easy to see that $f$ is equivalent to to $g$ if and only if so are $\Hat{f}$ and $\Hat{g}$.  
Let $\mathcal{F}\LL$ be the quotient set of $\cLF$ by this equivalence relation. 

A Fourier transform of $f\in \LL$ is defined by the equivalence class of 
the Fourier transform of a representative element of $f$, and denoted by $\Hat{f}$.
By definition, the Fourier transform maps $\LL$ to $\mathcal{F}\LL$.
Thus $\mathcal{F}\LL$ is a Hilbert space. From above, our Fourier transform is formulated as a unitary operator 
between two Hilbert spaces. 

Suppose that $w \in L^2(\RI,\cB(\RI) ,|\hat{f}|^2)$ and $f \in \mathcal{F}\LL$. 
We call the multiplication of $w$ and $\hat{f}$ to be a linear functional satisfying
\[
\la w\times \hat{f} , \hat{g} \ra = \int_{\RI} w(\xi) (\Hat{f}\cdot\bar{\Hat{g}} ) (d\xi)
\]
for arbitrary $g\in \LL$. 

By applying Minlos' and Sazanov' theorems, we are able to discuss the support of the Fourier transform of the square roots of measures.

\begin{Th}\label{Minlos lem}
Let $S \subset \RI$ be a nuclear space and $f \in L^2(\RI)$ be a $S$-shift continuous square root of a measure. It follows that
\[
 |\hat{f}|^2(\RI\setminus S^{'})=0.
\]
where $S^{'}$ is the dual space of $S$.
\end{Th}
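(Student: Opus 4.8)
The plan is to reduce the statement to an application of Minlos' theorem (Theorem \ref{Minlos}) by showing that the function
\[
\phi(x) = \la \tau_x f, f \ra_{\cl^2} \qquad (x \in S)
\]
is a continuous positive definite function \emph{on the nuclear space $S$}, so that the measure it produces by Minlos' theorem lives on the dual space $S^{'}$. Indeed, by Proposition \ref{weak com} we already know that $\phi(x) = \int_{\RI} e^{2\pi\sqrt{-1}\la x, y\ra}\mu(dy)$ where $\mu = |\hat f|^2$ is the square of $\hat f$ regarded as a bounded positive Borel measure on $\RI$. The positive definiteness of $\phi$ on $\RIO$ (hence on $S \subset \RI$) is exactly the computation already carried out in the proof of Proposition \ref{weak com}: $\sum_{i,j}\alpha_i\bar\alpha_j\phi(x_i-x_j) = \|\sum_i \alpha_i \tau_{x_i} f\|_{\cl^2}^2 \ge 0$, using the shift-invariance \eqref{shift inv}. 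The new ingredient is that, since $f$ is $S$-shift continuous, the restriction of $\phi$ to $S$ is continuous in the nuclear topology of $S$; this is precisely the hypothesis. Therefore the version of Minlos' theorem for nuclear spaces (quoted right after Theorem \ref{Minlos}: ``this theorem also holds for a continuous positive definite function on a nuclear space $S \subset \RI$ as the characteristic function of the measure on its dual space $S^{'}$'') applies and yields a bounded positive Borel measure $\nu$ \emph{on $S^{'}$} whose characteristic function, computed against $x \in S$, equals $\phi(x)$.

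The second step is to identify $\nu$ with $|\hat f|^2$ as measures on $\RI$ (after pushing $\nu$ forward along the inclusion $S^{'} \hookrightarrow \RI$). Both $\nu$ (pushed forward) and $\mu = |\hat f|^2$ are bounded positive Borel measures on $\RI$ whose characteristic functions agree on $S$. If $S$ is dense in $\RIO$ — or more precisely if the characteristic functions agree on a set sufficient to separate bounded measures on $\RI$ — then $\nu = \mu$ by uniqueness in Minlos' theorem on $\RI$. Actually it is cleanest to argue: the characteristic function of $\mu$ on $\RIO$ is the continuous extension of $\phi|_S$ (both are continuous on $\RIO$ and agree on the dense subspace $S$, using $S$-shift continuity together with $\RIO$-shift continuity from Proposition \ref{weak com}), and Minlos' theorem on $\RI$ has a unique measure with a given characteristic function on $\RIO$; hence $\mu$ equals the pushforward of $\nu$. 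Since $\nu$ is carried by $S^{'}$, its pushforward gives no mass to $\RI \setminus S^{'}$, so $|\hat f|^2(\RI \setminus S^{'}) = \mu(\RI\setminus S^{'}) = 0$.

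The main obstacle I anticipate is the measure-theoretic bookkeeping in the second step: one must be careful that $S^{'}$, viewed inside $\RI$ via the canonical pairing, is a Borel subset of $\RI$ (or at least $\mu$-measurable with a well-defined outer measure), and that ``the pushforward of $\nu$ along $S^{'}\hookrightarrow\RI$'' is legitimate — i.e. that the inclusion is Borel measurable and that the two measures genuinely coincide rather than merely having the same integral against the exponentials $e^{2\pi\sqrt{-1}\la x,\cdot\ra}$ for $x$ ranging over $S$ only. The resolution is that $S$ being a nuclear subspace of $\RI$, its algebraic dual (with the weak topology) embeds in $\RI$ and the embedding is continuous, hence Borel; and the family $\{e^{2\pi\sqrt{-1}\la x, \cdot\ra} : x \in \RIO\}$ determines bounded Borel measures on $\RI$ uniquely, while $\phi|_S$ determines $\phi|_{\RIO}$ by continuity and density. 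A secondary, more minor point is checking that $S$-shift continuity of $f$ as an element of $L^2(\RI)$ passes to a chosen representative in $\cL$, which is immediate from the definition $\tau_h[f] = [\tau_h f]$ and \eqref{shift inv}. Once these points are settled, the proof is a direct invocation of the nuclear-space Minlos theorem exactly as in Proposition \ref{weak com}.
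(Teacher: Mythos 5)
Your proposal is correct and follows exactly the route the paper intends (the theorem is stated there without a written proof, immediately after the remark that Minlos' theorem also holds for a continuous positive definite function on a nuclear space $S\subset\RI$ giving a measure on $S^{'}$): positive definiteness and $S$-continuity of $\phi(x)=\la \tau_x f, f\ra_{\cl^2}$, Minlos' theorem on $S$ producing a measure carried by $S^{'}$, and identification of that measure with $|\hat{f}|^2$ via characteristic functions. Note only that since $\RIO\subset S$ is implicit in the statement (it is needed for $|\hat{f}|^2$ to be defined through Proposition \ref{weak com} in the first place), the two characteristic functions agree on $\RIO$ outright, so all finite-dimensional marginals coincide and your density step --- which in any case has the inclusion reversed, $\RIO$ being dense in $S$ rather than $S$ in $\RIO$ --- can simply be dropped.
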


\begin{Th}\label{Sazanov lem}
Suppose that $a=\{a_n\}$ and $b=\{b_n\}$ are positive sequences satisfying \\ 
$\displaystyle \sum_{n=1}^{\infty} a^2_n b^2_n <\infty $. If $f\in L^2(\RI)$ be a $H_a$-shift continuous square root of a measure, then it follows that $|\hat{f}|^2(\RI\setminus H_a)=0$.
\end{Th}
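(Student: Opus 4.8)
The plan is to run the argument of Theorem \ref{Minlos lem} with Sazanov's theorem \ref{Sazanov-Minlos} replacing Minlos' theorem. Fix a representative $\{f_n\}$ of $f$ and set $\phi(x)=\la\tau_x f,f\ra_{\cl^2}$ for $x\in\RIO$. First I would observe that an $H_b$-shift continuous element of $L^2(\RI)$ is automatically $\RIO$-shift continuous: the inclusion $\RIO\hookrightarrow H_b$ is continuous (it is continuous on each finite-dimensional subspace, and $\RIO$ carries the corresponding inductive limit topology), so a net in $\RIO$ tending to $0$ also tends to $0$ in the $H_b$-norm. Hence $\Hat{f}$ and the bounded positive Borel measure $\mu:=|\Hat{f}|^2$ on $\RI$ are well defined, and Proposition \ref{weak com} yields
\[
\phi(x)=\int_{\RI}e^{2\pi\sqrt{-1}\la x,y\ra}\mu(dy)\qquad(x\in\RIO),
\]
i.e.\ $\phi$ is the restriction to $\RIO$ of the characteristic function of $\mu$.

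The second step is to promote $\phi$ to a continuous positive definite function on the Hilbert space $H_b$. Positive definiteness on $\RIO$ was checked inside the proof of Proposition \ref{weak com} through the identity $\sum_{i,j}\alpha_i\bar\alpha_j\phi(x_i-x_j)=\|\sum_i\alpha_i\tau_{x_i}f\|_{\cl^2}^2\ge 0$, and it survives any restriction or extension of the domain. For continuity, the bound $|\phi(x)-\phi(0)|\le\|\tau_x f-f\|_{\cl^2}\|f\|_{\cl^2}$ together with the $H_b$-shift continuity of $f$ gives continuity of $\phi$ at $0$ in the $H_b$-norm, and the elementary inequality $|\phi(x)-\phi(y)|^2\le 2\phi(0)\bigl(\phi(0)-\mathrm{Re}\,\phi(x-y)\bigr)$ — valid for every positive definite $\phi$ — upgrades this to uniform continuity of $\phi$ on $\RIO$ with respect to $\|\cdot\|_b$. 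Since the finitely supported sequences are dense in the weighted $\ell^2$-space $H_b$, $\phi$ extends uniquely to a continuous positive definite function on $H_b$, still denoted $\phi$.

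Now I would invoke Sazanov's theorem \ref{Sazanov-Minlos}: because $\sum_{n=1}^{\I}a_n^2 b_n^2<\I$, there is a bounded positive Borel measure $\nu$ on $H_a$ whose characteristic function equals $\phi$. Viewing $\nu$ as a bounded positive Borel measure on $\RI$ carried by the Borel set $H_a=\bigcup_{M}\bigcap_{N}\{x:\sum_{n=1}^{N}a_n^2x_n^2\le M\}$, its characteristic function restricted to $\RIO$ coincides with that of $\mu$; since a bounded Borel measure on $\RI$ is determined by its characteristic function (this is the uniqueness assertion accompanying Theorem \ref{Minlos}, obtained from the finite-dimensional Bochner--L\'evy uniqueness applied to each projection $p_n(\cdot)$), we conclude $\mu=\nu$. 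Hence $|\Hat{f}|^2(\RI\setminus H_a)=\nu(\RI\setminus H_a)=0$, which is the assertion. Running the same scheme with a nuclear subspace $S$ and Theorem \ref{Minlos} in its nuclear-space form in place of Sazanov's theorem reproduces Theorem \ref{Minlos lem}.

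The main obstacle I expect is not analytic but a matter of careful identifications: one must verify that $H_a$ is indeed a Borel subset of $\RI$, that the image measure under $H_a\hookrightarrow\RI$ retains the characteristic function computed in $H_a$, and — the delicate point — that the function on $H_b$ produced by Sazanov's theorem and the function on $\RIO$ produced by Proposition \ref{weak com} are literally the same after the identifications $\RIO\subset H_b\subset\RI$, so that the uniqueness theorem applies. Once $\mu=\nu$ is established the conclusion is immediate; the genuinely analytic ingredients (continuity and positive definiteness of $\phi$) are routine given the apparatus of Proposition \ref{weak com} and equation \eqref{shift inv}.
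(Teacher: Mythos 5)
Your overall scheme --- extend $\phi(x)=\la \tau_x f,f\ra_{\cl^2}$ from $\RIO$ to a continuous positive definite function on a weighted $\ell^2$ space, invoke Theorem \ref{Sazanov-Minlos}, and identify the resulting measure with $|\Hat{f}|^2$ through uniqueness of characteristic functions on $\RIO$ --- is exactly the argument the paper intends (the paper states Theorem \ref{Sazanov lem} without proof, immediately after citing Sazanov's theorem), and the individual analytic steps are sound: positive definiteness via \eqref{shift inv}, the inequality $|\phi(x)-\phi(y)|^2\le 2\phi(0)\bigl(\phi(0)-\mathrm{Re}\,\phi(x-y)\bigr)$, density of $\RIO$ in the weighted space, Borel measurability of $H_a$ inside $\RI$, and uniqueness through the finite-dimensional projections. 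The genuine problem is a hypothesis mismatch you never address: the theorem assumes $f$ is $H_a$-shift continuous, whereas your proof, from its first sentence on, uses $H_b$-shift continuity --- that is the only way to obtain continuity of $\phi$ in the $\|\cdot\|_b$-norm and feed it to Theorem \ref{Sazanov-Minlos} as stated. From the hypothesis actually given you only get continuity of $\phi$ on $H_a$, and then Sazanov's theorem (with the roles of $a$ and $b$ interchanged, which is legitimate since $\sum_{n}a_n^2b_n^2<\infty$ is symmetric) produces a measure carried by $H_b$, not by $H_a$; so under the stated hypothesis your argument proves $|\Hat{f}|^2(\RI\setminus H_b)=0$.

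This is not a defect you could have repaired, because the statement as printed is itself false: take $a_n\equiv 1$, $b_n=1/n$ and $f=\sqrt{\mu}$ with $\mu$ the countable product of standard Gaussians; then $f$ is $H_a=\ell^2$-shift continuous, while $|\Hat{f}|^2$ is again a nondegenerate product Gaussian measure which gives $\ell^2$ measure zero, although it is carried by $H_b$. The conclusion the author actually uses is evidently $|\Hat{f}|^2(\RI\setminus H_b)=0$: right after the theorem the paper integrates over $H_b$, places $W(\rho)$ in $L^2(H_b,\mathcal{B}(H_b),|\Hat{f}|^2)$, and in the next section writes $|\Hat{f}|^2(\R^{\I}\setminus H_b)=0$. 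So, up to renaming $a\leftrightarrow b$, your proposal is a correct proof of the corrected statement, but the write-up must say this explicitly instead of silently replacing the hypothesis. A smaller caveat: your claim that a net tending to $0$ in $\RIO$ tends to $0$ in $\|\cdot\|_b$ is not valid for the topology the paper actually puts on $\RIO$ (the seminorms $p_x$, $x\in\RI$, give a weak-type topology for which the inclusion into $H_b$ is not continuous); what you really need, and what does follow from shift continuity along a Hilbert space, is continuity of $\phi$ on every finite-dimensional coordinate subspace, which suffices to run Proposition \ref{weak com} and define $|\Hat{f}|^2$.
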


Let $f\in L^2(\RI)$ be a $H_a$-shift continuous square root of a measure and $\rho \in R_0^{\I}$. Since 
\[
\la \mathcal{F}(\tau_\rho f), \hat{g} \ra = \int_{\RI}  e^{2\pi\sqrt{-1} \la \xi, \rho \ra} (\Hat{f}\cdot\bar{\Hat{g}} )(d\xi)
\] 
for arbitrary $g \in \LL$, we have $\mathcal{F}(\tau_\rho f)= e^{2\pi\sqrt{-1} \la \xi, \rho \ra} \times \Hat{f}$.
Let $a=\{a_n\}$ and $b=\{b_n\}$ are positive sequences satisfying 
$\displaystyle \sum_{n=1}^{\infty} a^2_n b^2_n <\infty $ and $f\in L^2(\RI)$ be a $H_a$-shift continuous square root of a measure.
For arbitrary $\rho \in H_a$, let $\{\rho_k\}_{k=1}^{\I} \subset \RIO$ be a sequence such that $\displaystyle \lim_{k\to\I} \| \rho_k - \rho \|_{H_a} =0$. 
Since $|\hat{f}|^2(\RI\setminus H_a)=0$ by theorem \ref{Sazanov lem},  we have
\[
\| \tau_{\rho_m} f - \tau_{\rho_n} f \|^2 = \int_{H_b} | e^{2\pi\sqrt{-1} \la \xi, \rho_m \ra} -e^{2\pi\sqrt{-1} \la \xi, \rho_n \ra} |^2 |\hat{f}|^2(d\xi).
\]
Because $f$ is the $H_a$-shift continuous square root of a measure, $\| \tau_{\rho_m} f - \tau_{\rho_n} f \|^2 \to 0$ as $m\to \I, n\to\I$.
This shows that $e^{2\pi\sqrt{-1} \la \xi, \rho_n \ra},~ n=1,2, \cdots$ is a Cauchy sequence in the norm topology of  $L^2(H_b, \mathcal{B}(H_b),|\hat{f}|^2)$. 
We call the limit of these functions as $E(\rho)\in L^2(H_b, \mathcal{B}(H_b),|\hat{f}|^2)$. From this notation, we have
\[
\mathcal{F}(\tau_\rho f)= E(\rho)\times \Hat{f}.
\]
It is easy to check $E(\rho_1 + \rho_2)=E(\rho_1) E(\rho_2)$ for all $\rho_1,\rho_2 \in H_a$.

\begin{Prop}
Let $a=\{a_n\}$ be a positive sequence, $\mu$ be a complex Borel measure on $H_a$ and $f \in L^2(\RI)$ be $H_a$-shift continuous. We have
\begin{equation}
\int_{H_b} (\tau_\rho f )\mu(d\rho) = \mu \ast f .\label{Bochner rep}
\end{equation}
Here the left hand side is defined via the Bochner integral. 
\end{Prop}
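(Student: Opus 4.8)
\emph{Proof proposal.} Both members of \eqref{Bochner rep} are vectors of the Hilbert space $L^2(\RI)$, so the plan is to show that they pair identically with every class $[g]$, $g=\{g_n\}\in\cL$; since the quotient map $\cL\to L^2(\RI)$ is surjective and the inner product of $L^2(\RI)$ is nondegenerate, this forces the two to coincide. First I would check that the left-hand side is well defined. From \eqref{shift inv} and the group law $\tau_\rho\tau_{\rho'}=\tau_{\rho+\rho'}$ one gets $\|\tau_\rho f-\tau_{\rho'}f\|_{\cl^2}=\|\tau_{\rho-\rho'}f-f\|_{\cl^2}$, so the $H_a$-shift continuity of $f$ upgrades $\rho\mapsto\tau_\rho f$ to a uniformly continuous map from $H_a$ into $L^2(\RI)$; as $H_a$ is separable this map is strongly measurable, and $\int_{H_a}\|\tau_\rho f\|\,|\mu|(d\rho)=\|f\|\,|\mu|(H_a)<\I$, so the Bochner integral $\int_{H_a}(\tau_\rho f)\,\mu(d\rho)$ exists in $L^2(\RI)$.

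Then I would fix a representative $\{f_n\}$ of $f$ and an arbitrary $g=\{g_n\}\in\cL$. Since $\la\cdot,g\ra_{\cl^2}$ is a bounded linear functional it passes inside the Bochner integral, and unfolding the definitions of the product of square roots of measures and of $\la\cdot,\cdot\ra_{\cl^2}$ gives
\begin{align*}
\la \int_{H_a}(\tau_\rho f)\,\mu(d\rho),\,g \ra_{\cl^2}
&=\int_{H_a}\la\tau_\rho f,g\ra_{\cl^2}\,\mu(d\rho)\\
&=\int_{H_a}\left(\lim_{k\to\I}\int_{\R^k}f_k\bigl(x-p_k(\rho)\bigr)\,\overline{g_k(x)}\,dx\right)\mu(d\rho).
\end{align*}
The $k$-th inner integral has modulus at most $\|f_k\|_{L^2(\R^k)}\|g_k\|_{L^2(\R^k)}$; integrating the weakly $L^2$-compatibility condition \eqref{weakly L2} over $\R^k$ shows $\int_{\R^{k+1}}|f_{k+1}|^2\,dx\le\int_{\R^k}|f_k|^2\,dx$ along a superprojective system, whence by the four-term decomposition of Proposition \ref{L2 decomposition} we get $\sup_k\|f_k\|_{L^2(\R^k)}<\I$, and likewise for $g$. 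Translation invariance of Lebesgue measure keeps this bound independent of $\rho$, so, $|\mu|$ being finite, dominated convergence lets us interchange $\lim_k$ with $\int_{H_a}$. For each fixed $k$, Fubini's theorem (legitimate by the same $L^2$--$L^2$ estimate against $|\mu|$) and the change-of-variables formula for the image measure $p_k(\mu)$ yield
\[
\int_{H_a}\int_{\R^k}f_k\bigl(x-p_k(\rho)\bigr)\,\overline{g_k(x)}\,dx\,\mu(d\rho)=\int_{\R^k}(f_k\ast g'_k)(x)\,\overline{g_k(x)}\,dx,
\]
where $g'_k$ is the density of $p_k(\mu)$. Letting $k\to\I$, the right-hand side converges by definition to $\la\mu\ast f,g\ra_{\cl^2}$, since $\{f_k\ast g'_k\}$ represents $\mu\ast f$ (Lemma \ref{Lemconv}). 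Thus the two inner products agree for every $g\in\cL$, which is \eqref{Bochner rep}.

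The step I expect to be the main obstacle is the uniform-in-$\rho$ domination needed to pull $\lim_k$ past the $\mu$-integration; everything else is routine bookkeeping once that is in hand. It rests on two facts: translation does not change $L^2(\R^k)$-norms, and the $L^2(\R^k)$-norms of the components of a superprojective system are nonincreasing in $k$. One should also bear in mind the implicit requirement that $\mu$ be concentrated on $H_a$ with absolutely continuous finite-dimensional images $p_k(\mu)$, which is exactly what allows $\mu\ast f$ to be formed as $\{f_k\ast g'_k\}$; without densities one reads $f_k\ast g'_k$ as $x\mapsto\int_{\R^k}f_k(x-y)\,(p_k(\mu))(dy)$. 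As a cross-check, the identity also follows formally through the Fourier transform, using $\mathcal{F}(\tau_\rho f)=E(\rho)\times\widehat f$ and unitarity of $\mathcal{F}$:
\[
\mathcal{F}\left(\int_{H_a}(\tau_\rho f)\,\mu(d\rho)\right)=\left(\int_{H_a}E(\rho)\,\mu(d\rho)\right)\times\widehat f=\widehat\mu\times\widehat f=\mathcal{F}(\mu\ast f),
\]
but making this rigorous would require showing that $\mathcal{F}$ and the multiplication operators of Section 3 commute with Bochner integrals, so the pairing computation above is the cleaner route.
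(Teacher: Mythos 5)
Your proposal is correct and follows essentially the same route as the paper: both sides are paired against an arbitrary $g$, the functional is passed inside the Bochner integral, and $\int_{H_a}\la\tau_\rho f,g\ra\,\mu(d\rho)$ is identified with $\la\mu\ast f,g\ra$ through the finite-dimensional translated inner products. The only difference is that you spell out the measurability, the uniform $L^2$-domination coming from the superprojective structure, and the Fubini step, which the paper's two-line proof leaves implicit.
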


\begin{proof}
It follows immediately that
\[
\la \int_{H_b} (\tau_\rho f) \mu(d\rho) ,g \ra =  \int_{H_a} \la \tau_\rho f, g \ra \mu (d\rho).
\]
On the other hand, assuming that $\{f_n\}$ and $\{g_n\}$ are a representative of $f$ and $g$ respectively, we have 
\[
\langle \mu \ast f,g \rangle = \lim_{n \to \I}  \int_{H_a} \la \tau_{p_n(\rho)} f_n , g_n \ra_{L^2(R^n)} \mu(d\rho) = \int_{H_a} \la \tau_{\rho} f , g \ra \mu(d\rho) .
\] 
These two equations follow for arbitrary $g \in L^2(\RI)$, \eqref{Bochner rep} is proved. 
\end{proof}

The Fourier transform of convolution product $\mu \ast f$ is equal to the multiplication of the characteristic function $\mu$ to the Fourier transform of $f$. 
\begin{Th}
Suppose that $a=\{a_n\}$ and $b=\{b_n\}$ are positive sequences satisfying \\ 
$\displaystyle \sum_{n=1}^{\infty} a^2_n b^2_n <\infty $. 
Let $\mu$ be a complex Borel measure on $H_a$ such that the characteristic function $\hat{\mu}$ is continuously extendable over $H_b$ and 
$f \in L^2(\RI)$ be $H_a$-shift continuous. Then we have
\[
\mathcal{F}(\mu \ast f) = \hat{\mu} \times \hat{f} .
\]
\end{Th}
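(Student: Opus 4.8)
The plan is to combine the preceding Bochner representation $\mu\ast f=\int_{H_a}(\tau_\rho f)\,\mu(d\rho)$ with the identity $\mathcal F(\tau_\rho f)=E(\rho)\times\hat f$ already established, and then to recognise the resulting $E(\rho)$-average against $\mu$ as the characteristic function $\hat\mu$. First, $\mathcal F$ is applicable to $\mu\ast f$: from $\tau_\rho(\mu\ast f)=\mu\ast(\tau_\rho f)$ and $\|\mu\ast h\|\le\|\mu\|\,\|h\|$ one gets $\|\tau_\rho(\mu\ast f)-\mu\ast f\|\le\|\mu\|\,\|\tau_\rho f-f\|$, so $\mu\ast f$ inherits $\RIO$-shift continuity from $f$ and $\mu\ast f\in\LL$. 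Since $\rho\mapsto\tau_\rho f$ is bounded and continuous from $H_a$ into $L^2(\RI)$ it is Bochner integrable against the finite measure $\mu$, and as $\mathcal F\colon\LL\to\mathcal F\LL$ is a bounded (unitary) operator it passes through the integral:
\[
\mathcal F(\mu\ast f)=\int_{H_a}\mathcal F(\tau_\rho f)\,\mu(d\rho)=\int_{H_a}\bigl(E(\rho)\times\hat f\bigr)\,\mu(d\rho).
\]

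I would then reduce the theorem to one identity. Pairing the last display with $\hat g$ for arbitrary $g\in\LL$ (these exhaust $\mathcal F\LL$, $\mathcal F$ being onto), moving the bounded functional $\la\,\cdot\,,\hat g\ra$ inside the Bochner integral, unfolding the definition of the multiplication operator, and interchanging the two integrations --- legitimate since $|\mu|$ and $|\hat f\cdot\bar{\hat g}|$ are finite (the latter by \eqref{Sch 2}), $|\hat f\cdot\bar{\hat g}|\ll|\hat f|^2$, and $|E(\rho)(\xi)|=1$ for $|\hat f|^2$-a.e.\ $\xi$ --- yields
\[
\la\mathcal F(\mu\ast f),\hat g\ra=\int_{\RI}\Bigl(\int_{H_a}E(\rho)(\xi)\,\mu(d\rho)\Bigr)(\hat f\cdot\bar{\hat g})(d\xi).
\]
Hence it is enough to prove $\int_{H_a}E(\rho)\,\mu(d\rho)=\hat\mu$ as elements of $L^2(\RI,\cB(\RI),|\hat f|^2)$, for then the right side above becomes $\int_{\RI}\hat\mu\,d(\hat f\cdot\bar{\hat g})=\la\hat\mu\times\hat f,\hat g\ra$ for every $g$, which is the claim.

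To prove that identity I would test both sides against the total family $\{E(\sigma):\sigma\in\RIO\}$ of $L^2(\RI,\cB(\RI),|\hat f|^2)$ (totality coming from denseness of cylinder functions together with denseness of exponentials in each finite-dimensional $L^2$). For $\sigma\in\RIO$ the class $E(\sigma)$ is the honest function $\xi\mapsto e^{2\pi\sqrt{-1}\la\xi,\sigma\ra}$, and since $E(\rho)$ is an $L^2(|\hat f|^2)$-limit of such exponentials, $\la E(\rho),E(\sigma)\ra=\la\tau_{\rho-\sigma}f,f\ra$ by continuity of $h\mapsto\la\tau_h f,f\ra$ on $H_a$ and Proposition \ref{weak com}; thus $\la\int_{H_a}E(\rho)\,\mu(d\rho),E(\sigma)\ra=\int_{H_a}\la\tau_{\rho-\sigma}f,f\ra\,\mu(d\rho)$. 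On the other side, $\la\hat\mu,E(\sigma)\ra=\int_{\RI}\hat\mu(\xi)e^{-2\pi\sqrt{-1}\la\xi,\sigma\ra}|\hat f|^2(d\xi)$; here I would use that $|\hat f|^2$ is carried by $H_b$ (Sazanov's theorem, cf.\ Theorem \ref{Sazanov lem}, applied to the positive definite function $h\mapsto\la\tau_h f,f\ra$, continuous on $H_a$ by hypothesis, and the pair $a,b$), on which $p_n^{\ast}\xi=(\xi_1,\dots,\xi_n,0,\dots)\to\xi$ in $H_b$, so $\hat\mu(p_n^{\ast}\xi)\to\hat\mu(\xi)$ by continuity of the $H_b$-extension; dominated convergence reduces the integral to $\lim_n\int_{\R^n}\hat\mu(p_n^{\ast}\xi)e^{-2\pi\sqrt{-1}\la\xi,\sigma\ra_n}h_n(\xi)\,d\xi$ with $h_n$ the density of $p_n(|\hat f|^2)$, and writing $\hat\mu(p_n^{\ast}\xi)=\int_{H_a}e^{2\pi\sqrt{-1}\la\xi,\rho\ra_n}\mu(d\rho)$ (the characteristic function of $p_n(\mu)$), applying Fubini, recognising $\int_{\R^n}e^{2\pi\sqrt{-1}\la\xi,\rho-\sigma\ra_n}h_n(\xi)\,d\xi=\la\tau_{p_n^{\ast}(\rho-\sigma)}f,f\ra$, and letting $n\to\I$ (dominated convergence once more) gives again $\int_{H_a}\la\tau_{\rho-\sigma}f,f\ra\,\mu(d\rho)$. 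The inner products against every $E(\sigma)$ agree, so the identity holds and the theorem follows.

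The heart of the argument is this last identity $\int_{H_a}E(\rho)\,\mu(d\rho)=\hat\mu$, and the delicate point is precisely that one cannot evaluate the $L^2(|\hat f|^2)$-valued Bochner integral ``pointwise'' --- $E(\rho)$ is only an equivalence class and the pairing $\la\xi,\rho\ra$ is not literally defined for $\xi,\rho$ both in $H_a$ --- so the interchange of $\int_{\RI}$ and $\int_{H_a}$ has to be carried out against the genuine exponentials $E(\sigma)$, $\sigma\in\RIO$, and via cylinder truncation, which is exactly where the support statement $|\hat f|^2(\RI\setminus H_b)=0$ and the continuous extendability of $\hat\mu$ over $H_b$ enter. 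Everything else --- boundedness of $\mathcal F$ and of $w\mapsto w\times\hat f$, and the various Fubini and dominated-convergence interchanges --- is routine, as $|\mu|$ and $|\hat f|^2$ are finite and all integrands are bounded; only the bookkeeping of the inclusion $\RIO\subseteq H_a$ and of the fact that $|\hat f|^2$ is carried by $H_a\cap H_b$ requires a moment's attention.
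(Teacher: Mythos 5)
Your proposal is correct and follows essentially the same route as the paper: the Bochner representation $\mu\ast f=\int(\tau_\rho f)\,\mu(d\rho)$, passing $\mathcal{F}$ through the integral, using $\mathcal{F}(\tau_\rho f)=E(\rho)\times\hat{f}$, and factoring out $\hat{f}$. The only difference is that you carefully justify the final identification $\int E(\rho)\,\mu(d\rho)=\hat{\mu}$ in $L^2(|\hat{f}|^2)$ (via testing against the exponentials $E(\sigma)$, $\sigma\in\RIO$, cylinder truncation, and the support theorem), a step the paper's three-line proof takes as immediate.
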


\begin{proof}
By equation \eqref{Bochner rep}, we have
\begin{align*}
\mathcal{F}\left(\int_{H_b} (\tau_\rho f )\mu(d\rho) \right) &= \int_{H_b} \mathcal{F}(\tau_\rho f) \mu(d\rho) = \int_{H_b} E(\rho) \times \hat{f} \mu(d\rho) \\
&=\int_{H_b}  E(\rho) \mu(d\rho) \times \hat{f} = \hat{\mu} \times \hat{f} .
\end{align*}
\end{proof}

\section{\bf differential calculus for square roots of measure}
\setcounter{equation}{0}

In Averbuh-Smolyanov-Fomin \cite{Averbuh}, the differentiation (in the sense of Fomin) of complex measures  is formulated as follows.
\begin{Defi}
Let $\mu \in M(\RI)$ and $\rho \in H_a$. $\mu$ is called differentiable in the direction $\rho$ if 
\[
\lim_{t\to 0} \left \| \frac{\tau_{t\rho}\mu - \mu} {t} -\lambda \right\|= 0
\]
for some $\lambda \in M(X)$. $\lambda$ is denoted by $\partial_\rho \mu$ and called the directional derivative of $\mu$ in the direction $\rho$.
\end{Defi}

Definition of the differentiation for square roots of measures is parallel to the case of complex measures.
\begin{Defi}
Let $f\in L^2(\RI)$ be a $H_a$-shift continuous square root of a measure and $\rho \in H_a$. $f$ is called  differentiable in the direction $\rho$ if 
\[
\lim_{t\to 0} \left\| \frac{\tau_{t\rho}f - f} {t} -g \right\| = 0
\]
for some $g\in L^2(\RI)$. $g$ is denoted by $\partial_\rho f$ and called the directional derivative of $f$ in the direction $\rho$. 
If $\partial_\rho f$ exists for all $\rho \in H_a$, $f$ is called $H_a$ differentiable. 
\end{Defi}

This two differentiation is related with the following chain-rule. By using this, we are able to translate the properties of 
differentiation for the square roots of measures into those for the ordinary measures, or vice versa.
\begin{Prop}[chain-rule]
Let $f, g \in L^2(\RI)$ are $H_a$-shift continuous square roots of measures and $\rho \in H_a$. If $f,g$ are differentiable in the direction $\rho$, the product measure $f \cdot g$ are 
also differentiable in the direction $h$, and it follows that
\[
\partial_\rho (f \cdot g) = \partial_\rho f \cdot g + f \cdot \partial_\rho g.
\]  
Since $\partial_\rho (f \cdot g)(\RI)=0$, we also have 
\[
\la \partial_\rho f, g \ra = - \la f, \partial_\rho g \ra .
\]
\end{Prop}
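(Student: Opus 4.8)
The plan is to reduce the assertion to the fact that the product $(u,v)\mapsto u\cdot v$ is a jointly continuous bilinear map $L^2(\RI)\times L^2(\RI)\to M(\RI)$ and then to run the standard Leibniz-rule argument for differentiating such a map along the curve $t\mapsto\tau_{t\rho}$.

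First I would record two structural facts about the product. (i) Translation commutes with the product, $\tau_h(f\cdot g)=(\tau_h f)\cdot(\tau_h g)$ for all $h\in\RI$: this follows directly from the defining formula \eqref{product L2}, since translating by $h$ acts on the integrands $f_{n+k},g_{n+k}$ only in the first $n$ variables, which commutes both with integration in the remaining $k$ variables and with the monotone (resp. linear) limit in $k$ that defines $f\cdot g$. (ii) The product is bounded: for any finite Borel partition $E=\bigsqcup_i E_i$ of $\RI$, Proposition \ref{Sc eq} together with the Cauchy--Schwarz inequality for finite sums gives
\[
\sum_i|(u\cdot v)(E_i)|\le\sum_i|u\cdot v|(E_i)\le\sum_i\ssqrt{|u|^2(E_i)}\,\ssqrt{|v|^2(E_i)}\le\ssqrt{|u|^2(\RI)}\,\ssqrt{|v|^2(\RI)}=\|u\|_{\cl^2}\|v\|_{\cl^2},
\]
so $\|u\cdot v\|\le\|u\|_{\cl^2}\|v\|_{\cl^2}$; this also shows the product descends to $L^2(\RI)$ and there $\|u\cdot v\|\le\|u\|\,\|v\|$.

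Next, for $t\ne0$ I would write, using (i) and bilinearity,
\[
\frac{\tau_{t\rho}(f\cdot g)-f\cdot g}{t}=\Bigl(\frac{\tau_{t\rho}f-f}{t}\Bigr)\cdot(\tau_{t\rho}g)+f\cdot\Bigl(\frac{\tau_{t\rho}g-g}{t}\Bigr),
\]
and estimate the distance of the right-hand side from $\partial_\rho f\cdot g+f\cdot\partial_\rho g$ by inserting the intermediate terms $\partial_\rho f\cdot(\tau_{t\rho}g)$ and $f\cdot\partial_\rho g$ and applying the bound (ii) to each difference. Using the norm invariance $\|\tau_{t\rho}g\|=\|g\|$ from \eqref{shift inv}, the hypothesis $\|t^{-1}(\tau_{t\rho}f-f)-\partial_\rho f\|\to0$, and the $H_a$-shift continuity of $g$, which gives $\|\tau_{t\rho}g-g\|\to0$ as $t\to0$ because $\|t\rho\|_{H_a}=|t|\,\|\rho\|_{H_a}\to0$, every term tends to $0$. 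Hence $f\cdot g$ is differentiable in the direction $\rho$ and $\partial_\rho(f\cdot g)=\partial_\rho f\cdot g+f\cdot\partial_\rho g$.

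Finally, for the identity $\la\partial_\rho f,g\ra=-\la f,\partial_\rho g\ra$, I would observe that total mass is translation invariant, $(\tau_h(f\cdot g))(\RI)=(f\cdot g)(\RI)$, so the difference quotient of $f\cdot g$ evaluated at $\RI$ vanishes identically; since $|\lambda(\RI)|\le\|\lambda\|$, letting $t\to0$ gives $\partial_\rho(f\cdot g)(\RI)=0$. Applying the formula already proved with $g$ replaced by $\bar g$ --- which is again $H_a$-shift continuous and differentiable with $\partial_\rho\bar g=\overline{\partial_\rho g}$, both immediate from $\tau_h\bar g=\overline{\tau_h g}$ and conjugation-invariance of $\|\cdot\|_{\cl^2}$ --- and evaluating the resulting identity at $\RI$ yields $0=(\partial_\rho f\cdot\bar g)(\RI)+(f\cdot\overline{\partial_\rho g})(\RI)=\la\partial_\rho f,g\ra+\la f,\partial_\rho g\ra$. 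I expect the one point needing a little care to be fact (i) --- that $\tau_h$ passes through the monotone-convergence and linear-combination limits defining $f\cdot g$ --- while the rest is the routine product-rule bookkeeping made legitimate by the boundedness (ii).
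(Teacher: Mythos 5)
Your proposal is correct and follows essentially the same route as the paper: the Leibniz splitting of the difference quotient via $\tau_{t\rho}(f\cdot g)=(\tau_{t\rho}f)\cdot(\tau_{t\rho}g)$, the bound $\|u\cdot v\|\le\|u\|\,\|v\|$ from Proposition \ref{Sc eq}, translation invariance of the norm, and shift continuity. You merely make explicit a few points the paper leaves implicit (commutation of $\tau_h$ with the product, and the derivation of $\la\partial_\rho f,g\ra=-\la f,\partial_\rho g\ra$ by applying the product rule to $f$ and $\bar g$ and evaluating at $\RI$), which is consistent with, not divergent from, the paper's argument.
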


\begin{proof}
Let $\rho \in H_a$ and $f, g \in L^2(\RI)$ are differentiable square roots of measures in the direction $\rho$. We have
\begin{align*}
& \left\|\frac{  \tau_{t\rho} (f\cdot g) - (f \cdot g) }{t}- \partial_{\rho} f\cdot g - f \cdot \partial_{\rho} g \right \| 
= \left\| \tau_{t\rho}f \cdot \frac{\tau_{t\rho}g-g}{t} -f\cdot\partial_{\rho}g \right\| + \left\|\frac{\tau_{t\rho}f -f }{t} \cdot g -\partial_{\rho} f \cdot g \right\|    \\
& \le \|\tau_{t\rho} f-f \| \left\| \frac{\tau_{t\rho}g-g}{t} \right\| + \|f\| \left\| \frac{\tau_{t\rho}g-g}{t} - \partial_{\rho} g \right\| + \|g\| \left\| \frac{\tau_{t\rho}f-f}{t} - \partial_{\rho} f \right\| \to 0 \text{~as~} t \to 0
\end{align*}
by Schwarz inequality \eqref{Schwarz}. This finishes the proof.
\end{proof}

Suppose that $a=\{a_n\}$ and $b=\{b_n\}$ are positive sequences satisfying 
$\displaystyle \sum_{n=1}^{\infty} a^2_n b^2_n <\infty $. 
Let $\rho \in H_a$ and $f \in L^2(\RI)$ be a  differentiable square root of a measure in the direction $\rho$. Since 
\[
\left\| \frac{\tau_{t\rho} f- f}{t} - \partial_{\rho} f \right\| = \left\| \frac{E(t\rho)-1}{t} \times \Hat{f} -\mathcal{F} (\partial_{\rho} f) \right\| \to 0 \text{~as~} t \to 0 ,
\]
and $|\Hat{f}|^2(R^{\I}\setminus H_b)=0$,  we have 
\[
\left\| \frac{E(s\rho)-1}{s} \times \Hat{f} -\frac{E(t\rho)-1}{t} \times \Hat{f} \right\|^2 = \int_{H_b} \left| \frac{E(s\rho)-1}{s} - \frac{E(t\rho)-1}{t}  \right|^2 d|\hat{f}|^2 \to 0 \text{~as~} s,t \to 0 .
\]
Thus there exists $W(\rho) \in L^2(H_b,\mathcal{B}(H_b),|\Hat{f}|^2)$ such that $\displaystyle \lim_{t\to 0} \left\| t^{-1}(E(t\rho)-1)- 2\pi \sqrt{-1} W(\rho) \right\| =0.$ 
This shows that 
\[
\mathcal{F}(\partial_\rho f) = 2 \pi \sqrt{-1} W(\rho) \times \Hat{f} .
\]
We see at once that 
\begin{align*}
&(1)W(\rho_1 + \rho_2)= W(\rho_1) +W(\rho_2) \text{~for all~} \rho_1,\rho_2 \in H_a,  ~~~~~~~~~~~~~~~~~~~~~~~~~~~~~~~~~~~~~~~~~~~~~~~~~~~~~~~\\
&(2)W(\alpha \rho)=\alpha W(\rho) \text{~for all~} \alpha \in C, \rho \in H_a, \\
&(3)W(\rho) = \la \xi , \rho \ra  \text{~for all~} \rho \in R_0^{\I} , \\
&(4)E(\rho)=e^{2\pi \sqrt{-1} W(\rho)} \text{~for all~} \rho \in H_a.
\end{align*}

\begin{Prop}
Let $f \in L^2(\RI)$ be a differentiable square root of a measure in the direction $\rho \in H_a$ and $A \in \cB(\RI)$. 
Then $|f|^2(A)=0$ implies that $|\partial_\rho f |^2(A)=0$. So $|\partial_\rho f |^2$ is absolutely continuous with respect to $|f|^2$.
\end{Prop}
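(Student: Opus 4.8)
The plan is to reduce the statement to a property of the multiplication operators of Section~2 by showing that the directional derivative factors as $\partial_\rho f=\beta\times f$ for a ``logarithmic derivative'' $\beta\in L^2(\RI,\cB(\RI),|f|^2)$. Granting this, $|\partial_\rho f|^2$ is the measure with density $|\beta|^2$ relative to $|f|^2$, so the absolute continuity is immediate.

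First I would note that $\mu:=|f|^2$ is differentiable in the direction $\rho$ as a measure: applying the chain-rule proposition to $f\cdot\bar f$ gives $\partial_\rho(f\cdot\bar f)=\partial_\rho f\cdot\bar f+f\cdot\overline{\partial_\rho f}\in M(\RI)$. By the theory of Fomin-differentiable measures (Averbuh--Smolyanov--Fomin \cite{Averbuh}, Skorohod \cite{Skorohod}), a positive Fomin-differentiable measure is quasi-invariant along $\rho$, so for every $t$ one has $\tau_{t\rho}\mu=\phi_t\mu$ with $\phi_t>0$ $\mu$-a.e.\ and $\int\phi_t\,d\mu=\mu(\RI)$. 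I would fix a representative $f=\{f_n\}$ with $f_n=\sqrt{h_n}\ge0$, $h_n$ the density of $p_n(\mu)$ (possible by Lemmas~\ref{com cL} and \ref{sup cL}). Quasi-invariance forces $\tau_{t\rho}\mu\ll\mu$, hence $h_n(\cdot-t\rho)=0$ a.e.\ on $\{h_n=0\}$, so $\tau_{t\rho}f_n=\sqrt{h_n(\cdot-t\rho)}$ is supported in $\{h_n>0\}$ and equals there $\sqrt{h_n(\cdot-t\rho)/h_n}\,f_n$; letting $n\to\I$ this says $\tau_{t\rho}f=\sqrt{\phi_t}\times f$, with $\sqrt{\phi_t}\in L^2(\mu)$ since $\int\phi_t\,d\mu<\infty$. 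Consequently the difference quotient is $g_t:=t^{-1}(\tau_{t\rho}f-f)=w_t\times f$ with $w_t:=t^{-1}(\sqrt{\phi_t}-1)\in L^2(\mu)$, and, since $u\mapsto u\times f$ is linear and isometric from $L^2(\mu)$ into $L^2(\RI)$,
\[
\|w_t-w_s\|_{L^2(\mu)}=\|(w_t-w_s)\times f\|=\|g_t-g_s\|\longrightarrow 0\quad(s,t\to0),
\]
because $g_t\to\partial_\rho f$ by hypothesis. Thus $\{w_t\}$ converges in $L^2(\mu)$ to some $\beta$, and by continuity of $u\mapsto u\times f$ we get $\partial_\rho f=\beta\times f$.

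It remains to identify $|\beta\times f|^2$. Using the defining identity $\la w\times h,g\ra=\int w\,(h\cdot\bar g)$ of the multiplication operator, the rules $\overline{u\times h}=\bar u\times\bar h$ and $u\times(v\times h)=(uv)\times h$, and the compatibility identity $(u\times h)\cdot k=u\times(h\cdot k)$, I would compute for $A\in\cB(\RI)$
\begin{align*}
|\beta\times f|^2(A)&=\la\beta\times f,\ \mathbf{1}_A\times(\beta\times f)\ra=\la\beta\times f,\ (\mathbf{1}_A\beta)\times f\ra \\
&=\int\beta\,\bigl(f\cdot\overline{(\mathbf{1}_A\beta)\times f}\bigr)=\int_A|\beta|^2\,d|f|^2 .
\end{align*}
Hence $|\partial_\rho f|^2(A)=\int_A|\beta|^2\,d|f|^2$, which vanishes whenever $|f|^2(A)=0$; this gives both assertions.

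The main obstacle is the passage, in the second paragraph, from the translation $\tau_{t\rho}$ to multiplication by $\sqrt{\phi_t}$ for the infinite-dimensional object $f$ itself rather than for its finite-dimensional marginals --- it is exactly here that quasi-invariance ($\tau_{t\rho}\mu\ll\mu$) is indispensable, since without it $\tau_{t\rho}f$ need not be of the form $(\,\cdot\,)\times f$. A secondary point is to prove the compatibility identity $(u\times h)\cdot k=u\times(h\cdot k)$ (and with it the isometry $\|u\times f\|=\|u\|_{L^2(|f|^2)}$) for general $u\in L^2(|h|^2)$, which I would isolate as a short lemma, established from the finite-dimensional formulas and approximation by cylindrical functions.
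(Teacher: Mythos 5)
The argument stands or falls with the claim in your second paragraph that a positive Fomin-differentiable measure is quasi-invariant along the direction of differentiability, so that $\tau_{t\rho}\mu=\phi_t\,\mu$ and hence $\tau_{t\rho}f=\sqrt{\phi_t}\times f$. That claim is false, and with it the construction of $w_t$ and $\beta$ collapses. Fomin differentiability of $\mu\ge 0$ along $\rho$ gives absolute continuity of the \emph{derivative}, $\partial_\rho\mu\ll\mu$, not of the \emph{shifts} $\tau_{t\rho}\mu$; neither \cite{Averbuh} nor \cite{Skorohod} provides quasi-invariance. A counterexample inside the present framework: let $\mu=\mu_1\otimes\bigotimes_{n\ge 2}\gamma$ on $\RI$, where $d\mu_1=c\,(1-x^2)_+^2\,dx$ and $\gamma$ is the standard Gaussian, let $f=\sqrt{\mu}$ and $\rho=(1,0,0,\cdots)$. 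Since $(1-x^2)_+\in H^1(\R)$, $f$ is $H_a$-shift continuous (e.g.\ with $a_n\equiv 1$) and norm-differentiable along $\rho$, and $\mu$ is Fomin differentiable along $\rho$; yet $\tau_{t\rho}\mu$ charges the set $\{1<x_1\le 1+t\}$, which is $\mu$-null, so $\tau_{t\rho}\mu\not\ll\mu$, the density $\phi_t$ does not exist, and no factorization $\tau_{t\rho}f=w\times f$ is possible at all --- by your own deferred compatibility lemma such a factorization would force $|\tau_{t\rho}f|^2\ll|f|^2$. The conclusion of the proposition does hold in this example (the density of $|\partial_\rho f|^2$ in the first variable is proportional to $x_1^2\mathbf{1}_{[-1,1]}(x_1)$), so the statement is fine; it is your route that cannot reach it: quasi-invariance is strictly stronger than the absolute continuity being proved, so it cannot be the input.

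For contrast, the paper never translates shifts into multiplications. Fixing $A$ with $|f|^2(A)=0$, the Schwarz inequality \eqref{Schwarz} kills the cross terms, $\bigl|(f\cdot\overline{\tau_{t\rho}f})(A)\bigr|\le\sqrt{|f|^2(A)}\sqrt{|\tau_{t\rho}f|^2(A)}=0$, whence
\[
\Bigl|\tfrac{\tau_{t\rho}f-f}{t}\Bigr|^2(A)=t^{-2}\,|\tau_{t\rho}f|^2(A)=t^{-2}\,|f|^2(A\mp t\rho),
\]
and this converges to $|\partial_\rho f|^2(A)$ because norm convergence of the difference quotients controls the squared measures on every Borel set (as in the completeness proof); the limit is then identified as zero by a local argument at $t=0$, where the nonnegative function $t\mapsto|f|^2(A\mp t\rho)$ attains its minimum value $0$. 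So a pointwise-in-$A$, variational argument at $t=0$ replaces your global Radon--Nikodym factorization. If you want to keep the multiplication-operator picture, note that the paper does obtain such a factorization, but only after Fourier transform, $\mathcal{F}(\partial_\rho f)=2\pi\sqrt{-1}\,W(\rho)\times\hat f$, where no quasi-invariance is required.
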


\begin{proof}
Since $|f|^2(A)=0$ and $\left( f \cdot \tau_{t\rho} f \right) (A)=0$, we have
\[
|\partial_\rho f |^2(A) = \lim_{t \to 0} \left| \frac{\tau_{t\rho}f - f}{t} \right|^2 (A) = \left( \frac{d}{dt} \sqrt{|f|^2(A+t\rho)} \right)^2\Big|_{t=0} .
\] 
Because $|f|^2(A+t\rho)$ attains minimum value $0$ when $t=0$, derivative of $\sqrt{|f|^2(A+t\rho)}$ at $t=0$ is equal to $0$. Thus we have
$|\partial_\rho f |^2(A)=0$.
\end{proof}

\begin{Prop}
Let $f \in L^2(\RI)$ be a differentiable square root of a measure in the direction $\rho \in H_a$. Then $\partial_\rho f$ is also $H_a$-shift continuous.
\end{Prop}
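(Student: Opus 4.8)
The plan is to reduce the assertion to a statement about difference quotients and to exploit that translations are commuting linear isometries of $L^2(\RI)$. Fix an arbitrary sequence $\{h_m\}$ in $H_a$ with $h_m\to 0$; we must show $\|\tau_{h_m}(\partial_\rho f)-\partial_\rho f\|\to 0$. The first step is to note that $\tau_{h_m}$ commutes with the directional derivative: since $\tau_{t\rho}\tau_{h_m}=\tau_{h_m}\tau_{t\rho}$ and $\tau_{h_m}$ is linear, one has $t^{-1}\bigl(\tau_{t\rho}(\tau_{h_m}f)-\tau_{h_m}f\bigr)=\tau_{h_m}\bigl(t^{-1}(\tau_{t\rho}f-f)\bigr)$; letting $t\to 0$ and using that $\tau_{h_m}$ is a (norm-preserving, hence continuous) isometry on $L^2(\RI)$ together with the differentiability of $f$ in the direction $\rho$, we obtain that $\tau_{h_m}f$ is differentiable in the direction $\rho$ with $\partial_\rho(\tau_{h_m}f)=\tau_{h_m}(\partial_\rho f)$.

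Next I would insert an auxiliary difference quotient. For any $t\neq 0$,
\begin{align*}
\|\tau_{h_m}(\partial_\rho f)-\partial_\rho f\|
&=\|\partial_\rho(\tau_{h_m}f)-\partial_\rho f\|\\
&\le\Bigl\|\partial_\rho(\tau_{h_m}f)-\tfrac{1}{t}\bigl(\tau_{t\rho}(\tau_{h_m}f)-\tau_{h_m}f\bigr)\Bigr\|
+\Bigl\|\tfrac{1}{t}\bigl(\tau_{t\rho}(\tau_{h_m}f)-\tau_{h_m}f\bigr)-\tfrac{1}{t}\bigl(\tau_{t\rho}f-f\bigr)\Bigr\|\\
&\quad+\Bigl\|\tfrac{1}{t}\bigl(\tau_{t\rho}f-f\bigr)-\partial_\rho f\Bigr\|.
\end{align*}
By the commutation and the isometry property the first term equals $\bigl\|\partial_\rho f-t^{-1}(\tau_{t\rho}f-f)\bigr\|$, which is exactly the third term; and the middle term is
\[
\tfrac{1}{|t|}\bigl\|\tau_{t\rho}(\tau_{h_m}f-f)-(\tau_{h_m}f-f)\bigr\|\le\tfrac{2}{|t|}\,\|\tau_{h_m}f-f\|,
\]
again because $\tau_{t\rho}$ preserves the norm.

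Finally I would conclude with an $\varepsilon/3$ argument in which the two parameters are chosen in the right order. Given $\varepsilon>0$, differentiability of $f$ in the direction $\rho$ supplies some $t\neq 0$ (depending only on $\rho$ and $\varepsilon$) with $\bigl\|\partial_\rho f-t^{-1}(\tau_{t\rho}f-f)\bigr\|<\varepsilon/3$, which simultaneously bounds the first and third terms by $\varepsilon/3$ for every $m$. With this $t$ now fixed, the $H_a$-shift continuity of $f$ (which is built into the hypothesis that $f$ is differentiable in the direction $\rho$) gives $\|\tau_{h_m}f-f\|\to 0$, so $\tfrac{2}{|t|}\|\tau_{h_m}f-f\|<\varepsilon/3$ for all large $m$; hence $\|\tau_{h_m}(\partial_\rho f)-\partial_\rho f\|<\varepsilon$ eventually. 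As $\{h_m\}$ was an arbitrary sequence in $H_a$ converging to $0$, $\partial_\rho f$ is $H_a$-shift continuous.

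The only delicate point is precisely this order of quantifiers: the middle term carries the factor $1/|t|$, so one cannot let $t\to 0$ and $m\to\infty$ together; the argument works because, for each fixed admissible $t$, the middle term tends to $0$ as $m\to\infty$ by the $H_a$-shift continuity of $f$ itself. No Fourier transform is needed here, though one could alternatively pass to $\mathcal{F}(\partial_\rho f)=2\pi\sqrt{-1}\,W(\rho)\times\hat f$ and reduce the claim to $\int_{\RI}|W(\rho)|^2|E(h_m)-1|^2\,|\hat f|^2(d\xi)\to 0$ via dominated convergence, using that $\int_{\RI}|E(h_m)-1|^2\,|\hat f|^2(d\xi)=\|\tau_{h_m}f-f\|^2\to 0$.
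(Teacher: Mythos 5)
Your argument is correct, and it takes a genuinely different route from the paper. You work entirely on the "physical" side: you use that the translations $\tau_h$ form a commuting family of linear isometries of $L^2(\RI)$, deduce $\tau_{h_m}(\partial_\rho f)=\partial_\rho(\tau_{h_m}f)$, and then run an $\varepsilon/3$ argument in which a difference quotient at a \emph{fixed} small $t$ is inserted; the first and third terms are controlled uniformly in $m$ by the definition of $\partial_\rho f$, and the middle term, carrying the factor $2/|t|$, tends to $0$ for fixed $t$ by the $H_a$-shift continuity of $f$ (which, as you note, is part of the definition of differentiability in the paper). Your attention to the order of quantifiers is exactly the right delicate point, and all the ingredients you invoke (linearity of $\tau_h$, the isometry property \eqref{shift inv}, commutativity of translations) are available in the paper. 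The paper instead argues on the Fourier side: it writes $\|\tau_\sigma(\partial_\rho f)-\partial_\rho f\|^2=\int |e^{2\pi\sqrt{-1}W(\sigma)}-1|^2|W(\rho)|^2\,d|\hat f|^2$, truncates at a level $R$ with $\int_{|W(\rho)|>R}|W(\rho)|^2\,d|\hat f|^2<\epsilon/2$, and bounds the remainder by $R^2\|\tau_\sigma f-f\|^2$, again finishing with the shift continuity of $f$; this is essentially the dominated-convergence variant you sketch in your last sentence. What your route buys is elementarity and generality: it needs no support theorem, no $E(\rho)$ or $W(\rho)$, and would work verbatim for any strongly continuous group of commuting isometries. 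What the paper's route buys is that it stays inside the Fourier calculus it has just built (the identity $\mathcal{F}(\partial_\rho f)=2\pi\sqrt{-1}\,W(\rho)\times\hat f$), which is the machinery used in the subsequent sections, and it yields the explicit quantitative bound $\|\tau_\sigma(\partial_\rho f)-\partial_\rho f\|^2\le R^2\|\tau_\sigma f-f\|^2+\epsilon$.
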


\begin{proof}
For $\sigma \in \RIO$ we have
\[
\|\tau_\sigma (\partial_\rho f)  - \partial_\rho f \|^2 = \int_{\RI} | e^{2\pi \sqrt{-1} W(\sigma)} -1 |^2 |W(\rho)|^2 d|\hat{f}|^2.
\]
Since $W(\rho) \in L^2(H_b,\mathcal{B}(H_b),|\Hat{f}|^2)$ ($b=\{b_n\}$ is a positive sequence satisfying 
$\displaystyle \sum_{n=1}^{\infty} a^2_n b^2_n <\infty $) , for all $\epsilon >0$ there exists $R>0$ such that 
\[
\int_{|W(\rho)| > R} |W(\rho)|^2 d|\Hat{f}|^2 < \epsilon/2.
\]
Thus we obtain
\begin{align*}
\|\tau_\sigma (\partial_\rho f) & - \partial_\rho f \|^2 = \left(\int_{|W(\rho)| \le R} + \int_{|W(\rho)| > R} \right)| e^{2\pi \sqrt{-1} W(\sigma)} -1 |^2 |W(\rho)|^2 d|\hat{f}|^2  \\
& \le R^2 \int_{\RI}  | e^{2\pi \sqrt{-1} W(\sigma)} -1 |^2  d|\hat{f}|^2 + 2\int_{|W(\rho)| > R} |W(\rho)|^2 d|\hat{f}|^2 \le R^2 \|\tau_\sigma f -f \|^2 +\epsilon.
\end{align*}
Since $\|\tau_\sigma f -f \|^2 \to 0$ as $\|\sigma\|_{H_a} \to 0$, we have $\displaystyle \lim_{\|\sigma\|_{H_a} \to 0} \|\tau_\sigma (\partial_\rho f)  - \partial_\rho f \|^2 \le \epsilon$ for all $\epsilon > 0$.
It means that $\displaystyle \lim_{\|\sigma\|_{H_a} \to 0} \|\tau_\sigma (\partial_\rho f)  - \partial_\rho f \|^2 =0$.
\end{proof}
 
\begin{Defi}(Fr\'{e}chet derivative)
Let $f \in L^2(\RI)$ be $H_a$-shift continuous. If there is a bounded linear operator $df: H_a \mapsto L^2(\RI)$ such that
\[
\lim_{\|\rho\|_{H_a} \to 0} \|\tau_\rho f - f - df(\rho) \| / \|\rho\|_{H_a} =0 ,
\]
$f$ is said to be Fr\'{e}chet differentiable in the direction of $H_a$ and $df$ is called $H_a$ Fr\'{e}chet derivative in the direction of $H_a$.
Moreover if there is a bounded bilinear form $d^2 f : H_a \times H_a \mapsto L^2(\RI)$ such that
\[
\lim_{\|\rho_2\|_{H_a} \to 0}\|\tau_{\rho_2} (df(\rho_1)) - df(\rho_1) - d^2 f(\rho_1,\rho_2) \| / \|\rho_2\|_{H_a} =0
\]
for all $\rho_1 \in H_a$, $f$ is said to be twice Fr\'{e}chet differentiable in the direction of $H_a$ and $d^2 f$ is called second order
Fr\'{e}chet derivative in the direction of $H_a$.
\end{Defi}
 
The following theorem gives a sufficient condition for the existence of  $H_a$ Fr\'{e}chet derivatives of square roots of  measures.
\begin{Lem}\label{crossterm}
Let $f \in L^2(\RI)$ be a real-valued $R_0^{\I}$-shift continuous square root of a measure. Then
\[
\int_{\RI} \xi_i \xi_j d|\Hat{f}|^2 = 0, \text{~for all~} i,~j \in N, ~i\neq j .
\]
\end{Lem}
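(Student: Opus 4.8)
The plan is to turn the statement into an orthogonality of directional derivatives via the Fourier transform, reduce it to a finite–dimensional Plancherel identity, and then read off the vanishing from the real–valuedness of the square root.

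First I would observe that $\mu:=|\hat f|^2$ is the measure furnished by Proposition \ref{weak com}, and that in the regime where the statement is used (the second moments $\int_{\RI}\xi_i^2\,d\mu=\frac1{4\pi^2}\|\partial_{e_i}f\|^2$ are finite, so the $\partial_{e_i}f$ exist) the multiplication description $\mathcal F(\partial_{e_i}f)=2\pi\sqrt{-1}\,\xi_i\times\hat f$, together with the unitarity of $\mathcal F$ between $\LL$ and $\mathcal F\LL$ and the defining property of the multiplication operator, gives
\[
\int_{\RI}\xi_i\xi_j\,d|\hat f|^2=\frac1{4\pi^2}\,\langle\partial_{e_i}f,\partial_{e_j}f\rangle .
\]
So it suffices to show $\langle\partial_{e_i}f,\partial_{e_j}f\rangle=0$ for $i\ne j$.

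Next I would make this finite–dimensional. Choosing a real representative $f=\{f_n\}$ (by Lemma \ref{com cL} one may also impose the $L^2$–compatibility $\int_\R f_{n+1}(x,x')^2\,dx'=f_n(x)^2$), we have $\partial_{e_i}f=\{-\partial_i f_n\}$, whence
\[
\langle\partial_{e_i}f,\partial_{e_j}f\rangle=\lim_{m\to\I}\int_{\R^m}\partial_i f_m(x)\,\partial_j f_m(x)\,dx .
\]
Equivalently, running through the weakly convergent approximations $\mu_n^k$ to the marginals of $|\hat f|^2$, the left side is $\lim_k\int_{\R^{n+k}}\xi_i\xi_j|\hat f_{n+k}(\xi)|^2\,d\xi$, and Plancherel's theorem (legitimate because $f_m$ and its first partials are square integrable) identifies the $m$‑th integrand with $\frac1{4\pi^2}\partial_i f_m\,\partial_j f_m$. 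The interchange of $\lim_k$ with the unbounded weight $\xi_i\xi_j$ is handled by truncating to a ball $\{|\xi|\le L\}$, using the $L^1$–convergence $\int_{\R^k}|\hat f_{n+k}(\cdot,\xi')|^2\,d\xi'\to h_n$ from \eqref{L1 conv} on the truncation, and dominating the tail by $\int_{\{|\xi_i|\vee|\xi_j|>L\}}(\xi_i^2+\xi_j^2)\,d\mu\to0$.

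The main obstacle is the remaining claim that $\int_{\R^m}\partial_i f_m\,\partial_j f_m\,dx=0$ for $i\ne j$; this is where the hypothesis that $f$ is a \emph{real–valued} square root of a measure must be exploited. Real–valuedness already forces $\hat f_m(-\xi)=\overline{\hat f_m(\xi)}$, hence $|\hat f_m|^2$ is even in $\xi$; the natural route is to upgrade this to the per–coordinate reflection $\xi_i\mapsto-\xi_i$ — at the level of $f_m$, the map $x_i\mapsto-x_i$ — under which, after the integration by parts $\int\partial_i f_m\,\partial_j f_m\,dx=-\int f_m\,\partial_i\partial_j f_m\,dx$, the integrand becomes odd and the integral cancels; letting $m\to\I$ (equivalently $k\to\I$) then finishes the proof. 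Establishing that this coordinatewise symmetry is genuinely available from the square–root/compatibility structure — rather than the routine reduction to finite dimensions and the Plancherel step — is the delicate point, and it is there that the real content of the lemma lies.
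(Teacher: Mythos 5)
Your argument is a reduction, not a proof: after converting the statement (via $\mathcal{F}(\partial_{e_i}f)=2\pi\sqrt{-1}\,\xi_i\times\Hat{f}$ and Plancherel in finite dimensions) into the claim that $\lim_m\int_{\R^m}\partial_i f_m\,\partial_j f_m\,dx=0$, you correctly observe that real-valuedness only yields invariance of $|\Hat{f}_m|^2$ under the \emph{full} reflection $\xi\mapsto-\xi$, which is not enough, and that what is really needed is a per-coordinate symmetry $\xi_i\mapsto-\xi_i$ (or some other mechanism of cancellation). But you then explicitly leave that step open, calling it ``the delicate point'' where ``the real content of the lemma lies.'' That unproven step \emph{is} the lemma: for a general real $f_m\in L^2(\R^m)$ the quantity $\int\partial_i f_m\,\partial_j f_m\,dx$ is certainly not zero (e.g.\ any $f_m$ depending on $x_i+x_j$), and the coordinatewise reflection symmetry you hope to extract is not a consequence of the weak/strong $L^2$-compatibility conditions, which constrain only marginals and say nothing about symmetry in a fixed coordinate. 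So there is a genuine gap, and no argument is offered to close it; the surrounding machinery (directional derivatives, unitarity of $\mathcal{F}$, truncation of the weight $\xi_i\xi_j$, the $L^1$-convergence \eqref{L1 conv}) is fine but peripheral.

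For comparison, the paper's own proof is much more direct: it takes a real representative $\{f_n\}$, uses $\overline{\Hat{f}_n(\xi)}=\Hat{f}_n(-\xi)$ to write $|\Hat{f}_{2+k}(\xi)|^2=\Hat{f}_{2+k}(\xi)\Hat{f}_{2+k}(-\xi)$, integrates out all variables except $\xi_1$, and argues that the resulting $\xi_1$-integrand is odd, so each finite-dimensional integral vanishes before passing to the limit in $k$ --- no Fourier multiplication formula, no Plancherel, no differentiability of $f$ is invoked. Your route also quietly strengthens the hypotheses (existence of $\partial_{e_i}f$ and finiteness of the second moments), which the lemma as stated does not assume, although this is minor next to the main issue. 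In short: your parity concern is well posed and is exactly where the work lies, but since you do not establish the cancellation (nor reproduce the paper's odd-function argument), the proposal does not prove the statement.
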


\begin{proof}
For simplicity we take $i=1$ and $j=2$.
Let $\{f_n\} \in \cL$ be a  representative of $f$. Since $f_n ~(n=1,2,\cdots)$ are real-valued functions, it follows that $\overline{\Hat{f_n}(\xi)} = \Hat{f_n}(-\xi)$, 
and we have
\begin{align*}
\int_{\RI} \xi_1 \xi_2 d|\Hat{f}|^2 &= \lim_{k\to\I} \int_{R^{2+k}} \xi_1 \xi_2 |\Hat{f}(\xi_1,\xi_2,\xi')|^2 d\xi_1 d\xi_2 d\xi'  \\
&= \lim_{k\to\I} \int_{R} \xi_1 \left( \int_{R^{1+k}} \xi_2 \Hat{f}(\xi_1,\xi_2,\xi') \Hat{f}(-\xi_1,-\xi_2,-\xi') d\xi_2 d\xi' \right) d\xi_1.
\end{align*}
Because the integrand is an odd function with respect to $\xi_1$ for all $k \in N$,  the right hand side is equal to $0$. 
\end{proof}

\begin{Th}
Let $f\in L^2(\RI)$ be a real-valued $H_a$ differential square root of a measure. It follows that
\[
\| \partial_\rho f \|^2 \le \left( \sup_{n\ge 1}  \int_{\RI} a^{-2}_n \xi^2_n d|\hat{f}|^2 \right) \|\rho\|^2_{H_a},
\]
for all $\rho \in H_a$ and if $\displaystyle \sup_{n\ge 1}  \int_{\RI} a^{-2}_n \xi^2_n d|\hat{f}|^2 < \I$, the mapping $\rho \mapsto \partial_{\rho} f$ 
is a one-to-one linear bounded operator from $H_a$ to $L^2(\RI)$.
\end{Th}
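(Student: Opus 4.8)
The plan is to move the estimate onto the Fourier side, where $\partial_\rho f$ acts as a multiplication operator, and then to convert the resulting sum over coordinates into one controlled by a supremum using the vanishing of the mixed second moments, Lemma~\ref{crossterm}. The first step is the identity, valid for every $\rho\in H_a$,
\[
\|\partial_\rho f\|^2=4\pi^2\int_{\RI}|W(\rho)|^2\,d|\hat f|^2 ,
\]
obtained from the isometry of the Fourier transform $\LL\to\mathcal F\LL$, the representation $\mathcal F(\partial_\rho f)=2\pi\sqrt{-1}\,W(\rho)\times\hat f$ established above, and the identity $\|w\times\hat f\|^2=\int|w|^2\,d|\hat f|^2$ for the multiplication operator (a refinement of the bound used to define it); here $|\hat f|^2$ is carried by $H_a$ (Theorem~\ref{Sazanov lem}, $f$ being $H_a$-shift continuous) and lies in $M_0(\RI)$. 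If $C:=\sup_{n\ge1}\int_{\RI}a_n^{-2}\xi_n^2\,d|\hat f|^2=\infty$ the inequality is trivial, so from now on I assume $C<\infty$; then $c_n:=\int_{\RI}\xi_n^2\,d|\hat f|^2\le a_n^2C<\infty$ for every $n$.

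The core step is to identify $W(\rho)$ with the series $\sum_{n=1}^{\I}\xi_n\rho_n$, convergent in $L^2(|\hat f|^2)$. For a truncation $\rho^{(N)}=(\rho_1,\dots,\rho_N,0,\dots)\in\RIO$ property~$(3)$ gives $W(\rho^{(N)})=\sum_{n=1}^{N}\xi_n\rho_n$; expanding the square and dropping the off-diagonal terms via Lemma~\ref{crossterm} (permissible since the $c_n$ are finite) yields
\[
\int_{\RI}\bigl|W(\rho^{(N)})-W(\rho^{(M)})\bigr|^2\,d|\hat f|^2=\sum_{M<n\le N}\rho_n^2\,c_n\le C\sum_{M<n\le N}a_n^2\rho_n^2 ,
\]
which tends to $0$ as $M,N\to\I$ because $\sum_n a_n^2\rho_n^2=\|\rho\|_{H_a}^2<\I$; so the partial sums are Cauchy in $L^2(|\hat f|^2)$. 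As $\rho^{(N)}\to\rho$ in $H_a$ and the function $E$ is continuous with $E(\sigma)=e^{2\pi\sqrt{-1}W(\sigma)}$, the limit of $W(\rho^{(N)})$ is $W(\rho)$ itself. Letting $N\to\I$ in the same computation,
\[
\int_{\RI}|W(\rho)|^2\,d|\hat f|^2=\sum_{n=1}^{\I}\rho_n^2\,c_n=\sum_{n=1}^{\I}\bigl(a_n^2\rho_n^2\bigr)\bigl(a_n^{-2}c_n\bigr)\le C\,\|\rho\|_{H_a}^2 ,
\]
and combined with the first identity this gives the bound (with the normalization constant $4\pi^2$ implicit in the chosen Fourier transform).

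For the last assertion, when $C<\I$ the displayed inequality exhibits $\rho\mapsto\partial_\rho f$ as a bounded operator $H_a\to L^2(\RI)$, linear because $W$ is (properties~$(1)$ and $(2)$). Injectivity follows from the exact identity $\|\partial_\rho f\|^2=4\pi^2\sum_n\rho_n^2c_n$: if $\partial_\rho f=0$ then $\rho_n^2c_n=0$ for all $n$, so it suffices to see that $c_n>0$ for every $n$ (we may assume $f\neq0$). If $c_n=0$, then $|\hat f|^2$ is concentrated on $\{\xi_n=0\}$, so its $n$-th coordinate marginal---absolutely continuous on $\R$, being a marginal of the absolutely continuous measure $p_n(|\hat f|^2)$---would be a multiple of $\delta_0$, forcing $|\hat f|^2=0$ and hence $f=0$, a contradiction; thus $\rho=0$ and the map is injective. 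The step I expect to be the main obstacle is exactly the identification of $W(\rho)$ with $\sum_n\xi_n\rho_n$ for an arbitrary $\rho\in H_a$ rather than $\rho\in\RIO$: one must reconcile the abstractly defined directional derivative with the Fourier-side formula, and this passes through the continuity of $E$ and the relation $E=e^{2\pi\sqrt{-1}W}$. All the remaining steps are Cauchy--Schwarz together with Lemma~\ref{crossterm}.
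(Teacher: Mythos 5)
Your core estimate is exactly the paper's: pass to the Fourier side via $\mathcal{F}(\partial_\rho f)=2\pi\sqrt{-1}\,W(\rho)\times\hat{f}$, kill the off-diagonal terms with Lemma~\ref{crossterm}, and bound the diagonal sum by the supremum times $\|\rho\|^2_{H_a}$. Where you differ is in completeness rather than in method. The paper carries out this computation only for $\rho\in\RIO$ (where $W(\rho)=\la \xi,\rho\ra$ is immediate from property $(3)$) and then simply invokes unique continuous extension of the linear map to $H_a$; it never verifies that this extension coincides with the actual directional derivative $\partial_\rho f$ for general $\rho\in H_a$, and it offers no argument at all for the ``one-to-one'' claim. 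You address both points: you show the partial sums $\sum_{n\le N}\xi_n\rho_n$ are Cauchy in $L^2(|\hat{f}|^2)$ and identify the limit with $W(\rho)$, so the inequality holds for every $\rho\in H_a$ with $\partial_\rho f$ itself, and you supply an injectivity argument ($c_n>0$ for every $n$ once $f\neq 0$, using the absolute continuity of the marginals of $|\hat{f}|^2$ asserted after Proposition~\ref{weak com}). Two small remarks: (i) the identification step as you wrote it, via $E(\rho)=e^{2\pi\sqrt{-1}W(\rho)}$, only determines $W(\rho)$ modulo an integer-valued function; this is easily repaired either by running the argument for $t\rho$ over a dense set of small $t$ and using $W(t\rho)=tW(\rho)$, or more directly by noting $E(t\rho)=e^{2\pi\sqrt{-1}tS}$ (with $S$ the series limit) and recovering $2\pi\sqrt{-1}W(\rho)$ as the $L^2(|\hat{f}|^2)$-limit of $t^{-1}(E(t\rho)-1)$ by dominated convergence, which matches the paper's definition of $W$; (ii) you keep the factor $4\pi^2$ that the paper silently drops in its own computation, which is the more careful bookkeeping and explains the normalization discrepancy with the stated inequality. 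In short: same key lemma and same estimate, but your version proves the theorem as actually stated, at the cost of the extra Cauchy/identification step the paper avoids by extending by continuity.
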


\begin{proof}
Let $\rho \in R_0^{\I}$. By lemma\ref{crossterm} we have
\begin{align*}
\|\partial_\rho f \|^2 &= \int_{\RI} |W(\rho)|^2 d|\hat{f}|^2 =\sum_{k=1}^{\I} \int_{\RI}  \rho^2_k \xi^2_k d|\hat{f}|^2 + 2\sum_{i>j} \int_{\RI} \rho_i \rho_j \xi_i \xi_j d|\hat{f}|^2  \\
&= \sum_{k=1}^{\I}a^2_k \rho^2_k \int_{\RI} a^{-2}_k \xi^2_k d|\hat{f}|^2 \le \left( \sup_{n\ge 1}  \int_{\RI} a^{-2}_n \xi^2_n d|\hat{f}|^2 \right) \|\rho\|^2_{H_a}.
\end{align*}
Since $\rho \mapsto \partial_{\rho} f$ is a linear mapping from $H_a$ to $L^2(\RI)$, it is continuously extendable uniquely over $H_a$.  
\end{proof}

We write $B_a$ for Banach space $\{\xi \in \RI :\displaystyle  \sup_{n\ge 1}   |a^{-1}_n \xi_n| <\I \}$, and write $\|\xi\|_{B_a}=\displaystyle \sup_{n\ge 1}   |a^{-1}_n \xi_n| $.
If $\displaystyle \sum_{n=1}^{\I} a_n b_n <\I$ for positive numbers $\{b_n\}$, we have $B_a \subset H_b$ because
\[
\|\xi\|^2_{H_b} = \sum_{n=1}^{\I} a_n^2 b^2_n a_n^{-2} \xi_n^2 \le \|\xi\|^2_{B_a} \sum_{n=1}^{\I} a_n b_n.
\]

\section{\bf L\'{e}vy Laplacian and Fourier transform}
\setcounter{equation}{0}

In this section we introduce the L\'{e}vy Laplacian for square roots of measures on a classical Wiener space $C_0[0,T]$, the space of 
all real-valued continuous functions on $[0,T]$ which start at the origin. To do this, we will give an embedding of $C_0[0,T]$ into $\RI$ in the following way.

For $s \in R$, let 
\[
h_s = \{ a=\{a_n\}_{n=1}^{\I}\in R^\I : ~~ \sum_{n=1}^{\I}(1+n^2)^s|a_n|^2 <\I \}.
\]
 $h^s$ is a Hilbert space equipped with the inner product and the norm
\[
\la a,b \ra_s = \sum_{n=1}^{\I}(1+n^2)^s a_n b_n ~~\|a\|^2_s = \sum_{n=1}^{\I}(1+n^2)^s a^2_n~~ a,b \in h_s, 
\]
and let $\displaystyle \mathbf{s} = \bigcap_{n=1}^{\I} h^n $ be a locally convex topological vector space equipped with the 
seminorms $\|\cdot\|_s ~(s\in R)$. $\mathbf{s}$ is called the space of rapidly decreasing sequence and it is known to be
 a nuclear space. Let $\mathbf{s}^{'}$ be a dual space of $\mathbf{s}$ whose duality is given by the bilinear form
\[
\la a,b \ra = \sum_{n=1}^{\I} a_n b_n ,~a\in \mathbf{s},~b \in \mathbf{s}^{'} .
\]
Let $\{e_n\}$ be a complete orthonormal system of $H_0^1[0,T] = \{ \phi : \phi, \phi^{'} \in L^2[0,T] , \phi(0)=0 \} $
\[
e_0(s) = \frac{s}{\sqrt{T}}\lower1ex\hbox{,}~~ e_n(s) = \frac{\sqrt{2T}}{n\pi} \sin \frac{n\pi s}{T}\lower1ex\hbox{,}~~~n=1,2,\cdots ,
\]
and for $a=\{a_n\} \in \mathbf{s}^{'}$, we make one-to-one correspondence
$
a \mapsto \displaystyle \phi(s)= \sum_{n=1}^{\I} a_n e_n(s) 
$
from $\mathbf{s}^{'}$ to $\mathcal{D}^{'}[0,T]$: totality of distributions on $[0,T]$. We also call the image of $\mathbf{s}$ as $D[0,T]$.

Via this correspondence, we identify the Borel subset of $\mathbf{s}^{'}$ with the Borel subset of $\mathcal{D}^{'}[0,T]$ such as
\[
\mathcal{B}(\mathbf{s}^{'}) \ni A \mapsto \{ \phi \in \mathcal{D}^{'}[0,T] : (\la \phi, e_n \ra, \la \phi,e_2 \ra, \cdots ) \in A \} \in \mathcal{B}(\mathcal{D}^{'}[0,T]) .
\]
Thus we are able to identify square roots of measures on $\mathcal{D}^{'}[0,T]$ with those on $\mathbf{s}^{'}$. In this sense we will denote by $L^2(D^{'}[0,T])$ 
the set of the square roots of measures on $D^{'}[0,T]$ as the image of $L^2(\mathbf{s})$. The notion of the differentiation, the multiplication operator , and the convolution product are defined in the same manner. As for the Fourier transform, if $f$ be a $\mathbf{s}$-shift continuous square root of measure on $\mathbf{s}^{'}$, 
then $|\Hat{f}|^2(\RI \setminus \mathbf{s}^{'})=0$ by theorem \ref{Minlos lem}. Thus if $f \in L^2(D^{'}[0,T])$ is a $D[0,T]$-shift continuous square root of a measure,
then $|\Hat{f}|^2$ is defined as a positive bounded Borel measure on $D^{'}[0,T]$. 

Note that for $\phi \in C_0[0,T]$ we have 
\[
\la \phi, e_n \ra = \sqrt{\frac{2}{T}}(-1)^k\phi(T) + \sqrt{\frac{2}{T}}\int_{0}^{\pi} \phi\left( \frac{T}{\pi}s \right)k \sin ks ds.
\]

\begin{Defi}(L\'{e}vy-Laplacian)
Let $f \in L^2(D^{'}[0,T])$ be $H_0^1[0,T]$-shift continuous and $f$ is twice differentiable in the direction $e_n$ for all $n$. 
If $\displaystyle \frac{1}{n}\sum_{k=0}^{n} \partial^2_{e_k} f$ approaches to some $g \in L^2(D^{'}[0,T])$ in the norm topology of $L^2(D^{'}[0,T])$, 
we write $g = \Delta_L f$ and the operator $\Delta_L$ is called the L\'{e}vy Laplacian for square roots of measures with respect to the CONS $\{e_n\}_{n=0}^{\I}$.
\end{Defi}

We will examine the domain of the L\'{e}vy Laplacian for square roots of measures by using Fourier transform. If $f$ belongs to the domain of 
L\'{e}vy Laplacian,  we will have 
\[
\mathcal{F}(\Delta_L f) =  -\lim_{n\to\I} 4\pi^2 \frac{\la \phi,e_0 \ra^2 + \cdots \la \phi,e_n \ra^2}{n} \times \Hat{f}
\]
when the limit of the right hand side exists. 
So we first provide a sufficient condition for existence of the limit of $\displaystyle \frac{1}{n}\sum_{k=0}^{\I} \la \phi,e_k \ra^2 $.

\begin{Defi}
Let $\Delta$ be a partition of the interval $[0,T] :0 =v_0 < v_1 < \cdots < v_{n-1} < v_n = T$,  $|\Delta|= \displaystyle \max_{1\le i \le n} |v_i-v_{i-1}|$.
We call 
\[
Q_T(\phi,\Delta) = \sum_{i=1}^{n} |\phi(v_i)-\phi(v_{i-1})|^2
\]
the quadratic variation of $\phi \in C_0[0,T]$ over $[0,T]$ with partition $\Delta$. 
If $Q_T(\phi,\Delta)$ converges to some limit as $|\Delta| \to 0$, we call this the quadratic variation of $\phi \in C_0[0,T]$ over $[0,T]$
and write $\langle \phi \rangle_T$.
\end{Defi}

\begin{Th}\label{symbol Lap}
If there exists the quadratic variation of $\phi \in C_0[0,T]$ over $[0,T]$, it follows that  
\[
\lim_{n\to\I} \frac{\la \phi,e_0 \ra^2 + \cdots \la \phi,e_n \ra^2}{n} = \frac{\langle \phi \rangle_T}{T} .
\]
\end{Th}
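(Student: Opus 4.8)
The plan is to rewrite $\la\phi,e_k\ra$ in terms of the Fourier coefficients of an auxiliary continuous periodic function associated with $\phi$, and then to obtain the averaged limit from a Tauberian theorem; the hypothesis on $\phi$ will enter only through an $L^2$ modulus-of-continuity estimate. To begin the \emph{reduction}, put $\Psi(v)=\phi(v)-\tfrac{v}{T}\phi(T)$, so that $\Psi(0)=\Psi(T)=0$; since $\phi-\Psi$ is linear (hence Lipschitz), $\langle\Psi\rangle_{[0,T]}=\langle\phi\rangle_T$. Because $\int_0^T\cos\tfrac{k\pi v}{T}\,dv=0$ for $k\ge1$, the formula for $\la\phi,e_n\ra$ recorded just before the definition of $\Delta_L$ gives, for $k\ge1$,
\[
\la\phi,e_k\ra=\la\Psi,e_k\ra=\sqrt{\tfrac2T}\;k\int_0^\pi\psi(s)\sin ks\,ds,\qquad \psi(s):=\Psi(\tfrac T\pi s),
\]
while the $k=0$ term contributes only $O(1/n)$ after division by $n$. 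Let $G$ be the odd $2\pi$-periodic extension of $\psi$; as $\psi(0)=\psi(\pi)=0$, $G$ is continuous, and with $G(s)=\sum_{k\ge1}\beta_k\sin ks$ one has $\int_0^\pi\psi(s)\sin ks\,ds=\tfrac\pi2\beta_k$, so $\la\phi,e_k\ra^2=\tfrac{\pi^2k^2}{2T}\beta_k^2$. Since $G$ restricted to $[0,\pi]$ and to $[\pi,2\pi]$ is a reflection of $\psi$, and the rescaling $s\mapsto\tfrac T\pi s$ preserves quadratic variation, $\langle G\rangle_{[0,2\pi]}=2\langle\phi\rangle_T$. Hence the theorem is equivalent to $\tfrac1n\sum_{k=1}^n k^2\beta_k^2\to\tfrac2{\pi^2}\langle\phi\rangle_T$.

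\emph{Where the hypothesis is used.} I would next show
\[
\frac1h\int_0^{2\pi}\bigl|G(u+h)-G(u)\bigr|^2\,du\;\longrightarrow\;\langle G\rangle_{[0,2\pi]}=2\langle\phi\rangle_T\qquad (h\to0^+).
\]
For small $h$, tile the circle by the arcs $[s+mh,s+(m+1)h]$, $m\ge0$; for each fixed $s$ the sum of squared increments of $G$ over this partition equals $Q(G,\Delta_s)$ with mesh $|\Delta_s|=h$, and since quadratic variation is by definition the limit of $Q$ \emph{over all partitions with $|\Delta|\to0$}, $Q(G,\Delta_s)\to\langle G\rangle_{[0,2\pi]}$ uniformly in $s$. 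Integrating over $s\in[0,h]$ and using uniform continuity of $G$ to dispose of the boundary leftover yields the display. (Additivity of quadratic variation over subintervals, and $\langle\,\cdot\,\rangle_{[a,b]}\to0$ as $b-a\to0$, both elementary from the definition, are used along the way.)

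\emph{Parseval and a Tauberian theorem.} Parseval on $[0,2\pi]$ gives $\int_0^{2\pi}|G(u+h)-G(u)|^2\,du=4\pi\sum_{k\ge1}\beta_k^2\sin^2(kh/2)$, so the previous step reads $\sum_{k\ge1}\beta_k^2\sin^2(kh/2)\sim\tfrac{\langle\phi\rangle_T}{2\pi}\,h$ as $h\to0^+$. Writing $\sin^2(kh/2)=(kh)^2g_0(kh)$ with $g_0(x)=\sin^2(x/2)/x^2\ge0$ and $\int_0^\infty g_0=\pi/4$, this becomes $\sum_{k\ge1}(k^2\beta_k^2)\,g_0(kh)\sim\tfrac{\langle\phi\rangle_T}{2\pi h}$, and since the coefficients $k^2\beta_k^2$ are nonnegative, a Tauberian theorem of Karamata type (general-kernel version) gives $\tfrac1n\sum_{k=1}^n k^2\beta_k^2\to\tfrac4\pi\cdot\tfrac{\langle\phi\rangle_T}{2\pi}=\tfrac2{\pi^2}\langle\phi\rangle_T$. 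Combined with the reduction, $\tfrac1n\sum_{k=0}^n\la\phi,e_k\ra^2\to\tfrac{\pi^2}{2T}\cdot\tfrac2{\pi^2}\langle\phi\rangle_T=\tfrac{\langle\phi\rangle_T}{T}$.

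\emph{Main obstacle.} The essential difficulty is the middle step: extracting the $L^2$ modulus asymptotics for $G$ from the mere existence of $\langle\phi\rangle_T$. This genuinely needs the convergence $Q_T(\phi,\Delta)\to\langle\phi\rangle_T$ to be uniform over all partitions of small mesh — a weaker notion of quadratic variation (say along one fixed partition sequence) would break the translation-averaging argument. The remaining, more routine, points are the constant bookkeeping through the linear correction, the rescaling and the odd extension; the treatment of $h\notin\{2\pi/N:N\in\N\}$; and checking that $g_0$ satisfies the regularity hypotheses of the Tauberian theorem invoked.
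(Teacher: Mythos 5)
Your argument is correct, and it takes a genuinely different route from the paper's. You subtract the linear interpolant so that only the sine-integral part of $\la \phi,e_k\ra$ survives, pass to the odd $2\pi$-periodic extension $G$, and convert the Ces\`{a}ro statement about $k^2\beta_k^2\ge 0$ into the asymptotics of the translation-averaged $L^2$ modulus $\frac1h\int_0^{2\pi}|G(u+h)-G(u)|^2du$ via Parseval and a general-kernel (Wiener--Karamata) Tauberian theorem with the Fej\'{e}r-type kernel $g_0(x)=\sin^2(x/2)/x^2$ --- essentially Wiener's classical argument linking quadratic variation to Fourier coefficients. The paper instead applies the elementary Hardy--Littlewood proposition (Abel implies Ces\`{a}ro for nonnegative terms) directly to $\sum_k\la\phi,e_k\ra^2x^k$ and evaluates the Abel mean by explicit computations with the Poisson kernel and its derivatives $P_1,P_2$, isolating the singular part $K_2(x)$ and sandwiching it between $\inf$ and $\sup$ of $Q_T(\phi,\Delta)$ over partitions of small mesh, as in \eqref{QV2}. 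Both proofs exploit the hypothesis at exactly the point you identify: the convergence of $Q_T(\phi,\Delta)$ must be uniform over all partitions of small mesh, which is what makes your tiling-and-averaging step (and the paper's sandwich) work; your constants check out, since $\la\phi,e_k\ra^2=\frac{\pi^2k^2}{2T}\beta_k^2$, $\langle G\rangle_{[0,2\pi]}=2\langle\phi\rangle_T$ and $\int_0^\infty g_0=\pi/4$ combine to give $\langle\phi\rangle_T/T$. What each approach buys: the paper stays with an elementary Tauberian input at the price of lengthy kernel estimates; your route is shorter and more conceptual but leans on a stronger Tauberian theorem whose hypotheses you must verify --- in particular Wiener's condition that the Mellin transform of $g_0$ does not vanish on $\mathrm{Re}\,s=1$, which reduces to the classical evaluation of $\int_0^\infty(1-\cos u)\,u^{-2+i\tau}\,du$ and does hold --- together with the routine bookkeeping you flag (the $k=0$ term, $h$ not of the form $2\pi/N$, the straddling arc at the cut point, and additivity of quadratic variation at $\pi$, all controlled by continuity of $G$). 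None of these is a genuine gap.
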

To prove this theorem, we make use of the theory of summability method.

\begin{Defi}
Let $\displaystyle \sum_{n=0}^{\I} u_n$ be a real-valued series such that $\displaystyle f(x) = \sum_{n=0}^{\I} u_n x^n$ converges 
for all $0 \le x < 1$. If $f(x)$ converges to a some limit $s$ as $x\to 1-0$, we say that $\displaystyle \sum_{n=0}^{\I} u_n$
Abel converges to $s$ and write $A-\displaystyle \sum_{n=0}^{\I} u_n = s$.
Let $s_n=a_1+\cdots+a_n (n=0,1,\cdots)$, $f(x)$ is also written by
\[
f(x)=(1-x)\sum_{n=0}^{\I}s_n x^n ~~~(0 \le x < 1) .
\]
If $\displaystyle \frac{1}{n} \sum_{k=0}^{n} s_n$ converges to a some limit $s$ as $n\to\I$, we say that 
$\displaystyle \sum_{n=0}^{\I} u_n$ Ces\`{a}ro converges to $s$ and write $(C,1)-\displaystyle \sum_{n=0}^{\I} u_n = s$. 
\end{Defi}

\begin{Prop}\label{HL}(Hardy-Littlewood)
If $\displaystyle A-\lim_{n\to\I}s_n = s$ and $s_n \ge 0$ for all $n$, then \\ $\displaystyle (C,1)-\lim_{n\to\I}s_n =s$.
\end{Prop}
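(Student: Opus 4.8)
The statement is Karamata's form of the Hardy--Littlewood Tauberian theorem: nonnegativity of the terms is exactly the Tauberian condition that converts Abel summability into Ces\`{a}ro summability. The plan is to prove it by Karamata's method, in which Weierstrass approximation reduces everything to evaluating the Abel limit at rescaled arguments, and the hypothesis $s_n\ge 0$ is used both to control approximation errors and to pass to a discontinuous test function.

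First I would rewrite the Abel hypothesis in exponential coordinates. Setting $x=e^{-t}$ and using $1-e^{-t}\sim t$ as $t\to 0^+$, the assumption $\displaystyle\lim_{x\to 1-0}(1-x)\sum_{n=0}^{\I}s_n x^n=s$ becomes
\[
\lim_{t\to 0^+} t\,\phi(t)=s,\qquad \phi(t):=\sum_{n=0}^{\I}s_n e^{-nt},
\]
where $\phi(t)$ is finite for $t>0$ because $\sum s_n x^n=(1-x)^{-1}\sum u_n x^n$ converges for $x<1$. Replacing $t$ by $(k+1)t$ gives $t\,\phi((k+1)t)\to s/(k+1)$ for every integer $k\ge 0$. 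Hence, for any polynomial $P(u)=\sum_k c_k u^k$, expanding $P(e^{-nt})$ and summing in $n$ yields
\[
t\sum_{n=0}^{\I}s_n e^{-nt}P(e^{-nt})=\sum_k c_k\, t\,\phi((k+1)t)\;\longrightarrow\; s\sum_k\frac{c_k}{k+1}=s\int_0^1 P(u)\,du .
\]

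Next I would upgrade this identity from polynomials to arbitrary continuous $g$ on $[0,1]$. Given $\varepsilon>0$, choose by Weierstrass a polynomial $P$ with $\|g-P\|_\infty<\varepsilon$; since $s_n\ge 0$, the error is bounded by
\[
t\sum_{n=0}^{\I}s_n e^{-nt}\bigl|g(e^{-nt})-P(e^{-nt})\bigr|\le \varepsilon\, t\,\phi(t),
\]
and $t\,\phi(t)$ is bounded as $t\to 0^+$, so letting $\varepsilon\to 0$ gives $\displaystyle\lim_{t\to 0^+}t\sum_n s_n e^{-nt}g(e^{-nt})=s\int_0^1 g(u)\,du$ for every continuous $g$.

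Finally I would reconstruct a Ces\`{a}ro mean by specializing the test function. With $t=1/N$ and $g(u)=u^{-1}\mathbf{1}_{[e^{-1},1]}(u)$ one has $e^{-nt}g(e^{-nt})=\mathbf{1}\{n\le N\}$, so the left side equals $\frac1N\sum_{n\le N}s_n$ while $s\int_0^1 g\,du=s\int_{e^{-1}}^1 u^{-1}\,du=s$. This $g$ is discontinuous at $u=e^{-1}$ and is therefore not covered by the previous step; \textbf{this is the main obstacle}, and it is exactly where nonnegativity is indispensable. I would resolve it by sandwiching $g_-\le g\le g_+$ between continuous functions that agree with $g$ outside a $\delta$-neighborhood of $e^{-1}$ and satisfy $\int_0^1(g_+-g_-)\,du\le C\delta$. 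Because $s_n\ge 0$, the ordering survives the sums, so the continuous case applied to $g_\pm$ traps both $\liminf$ and $\limsup$ of $\frac1N\sum_{n\le N}s_n$ between $s\int_0^1 g_-\,du$ and $s\int_0^1 g_+\,du$; letting $\delta\to 0$ forces both integrals to $s\int_0^1 g\,du=s$, which is the asserted Ces\`{a}ro convergence.
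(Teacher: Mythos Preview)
The paper does not prove this proposition at all: it is stated as the classical Hardy--Littlewood Tauberian theorem and immediately invoked in the proof of Theorem~\ref{symbol Lap}, with no argument supplied. Your proposal is the standard Karamata proof, and it is correct; the substitution $x=e^{-t}$, the polynomial-to-continuous upgrade via Weierstrass using $s_n\ge 0$, and the sandwiching of the discontinuous test function $g(u)=u^{-1}\mathbf{1}_{[e^{-1},1]}(u)$ are all carried out correctly, and together they yield the Ces\`{a}ro convergence. One small remark: the inequality $s\int_0^1 g_-\,du\le\liminf\le\limsup\le s\int_0^1 g_+\,du$ uses $s\ge 0$, which is automatic here since $(1-x)\sum_n s_n x^n\ge 0$ forces the Abel limit to be nonnegative.
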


proof of theorem \ref{symbol Lap}:
Proposition \ref{HL} shows that 
\[
\lim_{n\to\I} \frac{\la \phi,e_0 \ra^2 + \cdots \la \phi,e_n \ra^2}{n}  = \lim_{x\to 1-0}(1-x)\sum_{n=0}^{\I} \la \phi,e_n \ra^2 x^n
\]
if the limit of the right hand side exists. Remember that 
\[
\la \phi, e_n \ra = \sqrt{\frac{2}{T}}(-1)^k\phi(T) + \sqrt{\frac{2}{T}}\int_{0}^{\pi} \phi\left( \frac{T}{\pi}s \right)k \sin ks ds
\]
for all $\phi \in C_0[0,T]$. For $0\le x <1$ we have
\begin{align*}
 (1-x)\sum_{n=0}^{\I}& \la \phi,e_n \ra^2 x^n = \frac{2}{T} \bigl\{  \phi^2(T) 
 + 2(1-x)\int_{0}^{\pi}\sum_{k=0}^{\I} (-1)^k\phi(T)\phi\left( \frac{T}{\pi}s \right) kx^k \sin ks ds \\
 &+ (1-x)\sum_{k=0}^{\I}x^k \left( \int_{0}^{\pi} \phi\left( \frac{T}{\pi} s \right)k\sin ks ds \right)^2 \Bigr\}.
\end{align*}
We write $I_1(x)$ for the second term and $I_2(x)$ for the third term.
Let $P_0(x,s)$ be the Poisson kernel and $P_1(x,s)$ be as follows.
\begin{align*}
P_0(x,s) &= \frac{1}{\pi} \frac{1-x^2}{1-2x\cos s+x^2} ~~~(0\le x \le 1, 0 \le s \le \pi )  \\
P_1(x,s) &= -(1-x)\frac{\partial}{\partial s}P_0(x,s) ~~~(0\le x \le 1, 0 \le s \le \pi ).
\end{align*}

Since by
\[ 
\sum_{k=0}^{\I}k x^k \sin ks = -\frac{\partial}{\partial s} \left( \frac{1}{2} + \sum_{k=0}^{\I}x^k \cos ks \right) 
=-\frac{\pi}{2} \frac{\partial}{\partial s}P_0(x,s), 
\]
we have
\begin{align*}
I_1(x) =& -\frac{4}{T}(1-x)\phi(T) \int_{0}^{\pi}\phi\left(\frac{T}{\pi}s \right)\sum_{k=0}^{\I} k x^k \sin k(\pi-s)  ds \\
&=-\frac{2\pi}{T} 
\phi(T)\int_{0}^{T}P_1(x,\pi -s)\phi\left( 
\frac{T}{\pi}s 
\right) ds 
\end{align*}

\begin{Lem}\label{P1 est}
For all $\phi \in C[0,\pi]$, we have
\[
\lim_{x\to 1-0} \int_{0}^{\pi}P_1(x,s)\phi(s)ds = \frac{2}{\pi}\phi(0) .
\]
\end{Lem}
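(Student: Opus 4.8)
The plan is to recognize $P_1(x,\cdot)$ as a nonnegative kernel on $[0,\pi]$ that forms an approximate identity at $s=0$ with total mass tending to $2/\pi$, and then to run the standard continuity argument.

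First I would write $P_1$ out explicitly. Differentiating the Poisson kernel gives $\frac{\partial}{\partial s}P_0(x,s) = -\frac{1}{\pi}\,\frac{2x(1-x^2)\sin s}{(1-2x\cos s + x^2)^2}$, hence
\[
P_1(x,s) = \frac{1}{\pi}\,\frac{2x(1-x)^2(1+x)\sin s}{(1-2x\cos s+x^2)^2}.
\]
Since $1 - 2x\cos s + x^2 = (1-x)^2 + 2x(1-\cos s) > 0$ for $0 \le x < 1$ and $0 \le s \le \pi$, and $\sin s \ge 0$ on $[0,\pi]$, this shows $P_1(x,s) \ge 0$ there. Using instead the primitive form $P_1 = -(1-x)\,\partial_s P_0$, the total mass is $(1-x)\bigl(P_0(x,0) - P_0(x,\pi)\bigr) = \frac{1}{\pi}\bigl((1+x) - (1-x)^2/(1+x)\bigr)$, which tends to $2/\pi$ as $x \to 1-0$.

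Next I would establish concentration at the origin. For a fixed $\delta \in (0,\pi)$ and $s \in [\delta,\pi]$ one has $1 - 2x\cos s + x^2 \ge 1 - 2x\cos\delta + x^2$, which stays bounded away from $0$ for $x$ near $1$ (its limit being $2(1-\cos\delta) > 0$), while the numerator of $P_1(x,s)$ is at most $\frac{4}{\pi}(1-x)^2$. Hence $\sup_{s\in[\delta,\pi]}P_1(x,s) = O((1-x)^2)$, so $\int_\delta^\pi P_1(x,s)\,ds \to 0$ as $x \to 1-0$, and combining with the mass computation, $\int_0^\delta P_1(x,s)\,ds \to 2/\pi$.

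Finally I would conclude by the usual splitting. Given $\varepsilon > 0$, choose $\delta$ so that $|\phi(s) - \phi(0)| < \varepsilon$ for $s \in [0,\delta]$, and decompose $\int_0^\pi P_1(x,s)\phi(s)\,ds - \frac{2}{\pi}\phi(0)$ as the sum of $\int_0^\delta P_1(x,s)(\phi(s)-\phi(0))\,ds$, $\phi(0)\bigl(\int_0^\delta P_1(x,s)\,ds - \frac{2}{\pi}\bigr)$ and $\int_\delta^\pi P_1(x,s)\phi(s)\,ds$. The first term is bounded by $\varepsilon\int_0^\pi P_1(x,s)\,ds$, which is bounded uniformly in $x$; the second and third tend to $0$ by the previous step; letting $x \to 1-0$ and then $\varepsilon \to 0$ gives the claim. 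The argument is entirely routine; the only point needing slight care — the closest thing to an obstacle — is that the mass of $P_1(x,\cdot)$ equals $2/\pi$ only in the limit, so the term $\phi(0)\bigl(\int_0^\delta P_1 - 2/\pi\bigr)$ must be kept through the estimate rather than dropped.
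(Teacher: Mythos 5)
Your proposal is correct and follows essentially the same route as the paper: nonnegativity of $P_1$, the total-mass computation $\int_0^\pi P_1(x,s)\,ds \to 2/\pi$ via the primitive $-(1-x)P_0$, vanishing of the tail $\int_\delta^\pi P_1$, and the standard approximate-identity splitting using uniform continuity of $\phi$. The only cosmetic difference is that you bound the tail by a pointwise estimate $\sup_{[\delta,\pi]}P_1(x,\cdot)=O((1-x)^2)$ while the paper evaluates $\int_\delta^\pi P_1$ exactly from the primitive; both are fine.
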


\begin{proof}
Since
\[
P_1(x,s)=\frac{2}{\pi}x(1-x)(1-x^2) \frac{\sin x}{(1-2x\cos s + x^2)^2}, 
\]
we have $P_1(x,s) \ge 0~~(0\le s \le \pi,~0\le x \le 1)$. Then we have  
\begin{align*}
\int_{0}^{\pi}P_1(x,s) ds &= \left[ -(1-x)P_0(x,s) \right]_{0}^{\pi} = (1-x)P_0(x,0) - (1-x)P_0(x,\pi) \\
&=\frac{1}{\pi}(1+x)-\frac{1}{\pi}\frac{(1-x)^2}{1+x} \to \frac{2}{\pi} ~~(x\to 1-0).
\end{align*}
Hence by   
\begin{equation}\label{delta est 1}
\int_{\delta}^{\pi} P_1(x,s) ds = (1-x)\{P_0(x,\delta) - P_0(x,0) \} \to 0~~(x\to 1-0)
\end{equation}
for all  $\delta > 0$, we obtain 
\begin{align*}
\bigg| \int_{0}^{\pi} P_1(x,s) & \{ \phi(s)-\phi(0) \} ds \bigg| \\
& \le  \sup_{0<s<\delta} |\phi(s)-\phi(0)|\int_{0}^{\delta}P_1(x,s)ds + 2\sup_{0<s<\pi}|\phi(s)| \int_{\delta}^{\pi}P_1(x,s)ds
\end{align*}
for all $\delta > 0$ and $\phi \in C[0,\pi]$. Since $\phi$ is uniformly continuous, by choosing $\delta$ small enough the first term converges to $0$ as $x\to 1-0$.
Because of \eqref{delta est 1}, the second term also converges to $0$ as $x\to 1-0$. 
\end{proof}

Thus we have $\displaystyle \lim_{x\to 1-0} I_1(x) = -\frac{4}{T}\phi^2(T).$ For $I_2(x)$ we have
\begin{align*}
I_2(x) &= \frac{2}{T}(1-x)\sum_{k=1}^{\I}x^k \int_{0}^{\pi}\int_{0}^{\pi}\phi\left( \frac{T}{\pi}s \right)\phi\left( \frac{T}{\pi}t \right) k^2 \sin ks\sin kt dsdt \\
&=-\frac{1}{T}(1-x)\int_{0}^{\pi}\int_{0}^{\pi}\left|\phi\left( \frac{T}{\pi}s \right) - \phi\left( \frac{T}{\pi}t \right)\right|^2 
\sum_{k=0}^{\I}k^2 x^k \sin ks \sin kt dsdt  \\
&~~~~~~~~~+ \frac{1}{T}(1-x)\int_{0}^{\pi}\int_{0}^{\pi}\left\{ \phi^2\left( \frac{T}{\pi}s \right) + \phi^2\left( \frac{T}{\pi}t \right)  \right\}
 \sum_{k=0}^{\I}k^2 x^k \sin ks \sin kt dsdt .
\end{align*}
We write $J_1(x)$ for the first term and $J_2(x)$ for the second term.
Hence by Lemma \ref{P1 est} and
\begin{align*}
J_2(x) &= \frac{2}{T}\int_{0}^{\pi} \phi^2 \left( \frac{T}{\pi}s \right) \sum_{k=0}^{\I} \{ (-1)^k -1 \} kx^k\sin ks ds \\
&= \frac{\pi}{T}\int_{0}^{\pi} \{ P_1(x,\pi-s) + P_1(x,s) \} \phi^2 \left( \frac{T}{\pi}s \right) ds ,
\end{align*}
we have $\displaystyle \lim_{x\to 1-0} J_2(x) = \frac{2}{T}\phi^2(T).$ Taken together, if $\displaystyle \lim_{x\to 1-0} J_1(x)$ exists, we will have
\[
\lim_{x\to 1-0}(1-x)\sum_{n=0}^{\I} \la \phi,e_n \ra^2 x^n = \lim_{x\to 1-0} J_1(x) .
\]

So we will concerned with the limit of $J_1(x)$ as $x \to 1-0$. Here since 
\begin{align*}
(1-x)\sum_{k=0}^{\I}k^2 x^k \sin ks \sin kt &= \frac{1}{2}(1-x)\sum_{k=0}^{\I}k^2 x^k \{ \cos k(s-t) - \cos k(s+t) \} \\
&= \frac{1}{2}\frac{\partial}{\partial s}(1-x)\sum_{k=0}^{\I}k x^k \{ \sin k(s-t) - \sin k(s+t) \} \\
&= \frac{\pi}{4}\frac{\partial}{\partial s} \left\{ P_1(x,s-t) - P_1(x,s+t) \right\} ,
\end{align*}
we have 
\begin{align*}
J_1(x) &= \frac{\pi}{4T}\int_{0}^{\pi}\int_{0}^{\pi}\left|\phi\left( \frac{T}{\pi}s \right) - \phi\left( \frac{T}{\pi}t \right)\right|^2 P_2(x,s+t)dsdt \\
&~~~~~~~~~~-\frac{\pi}{4T}\int_{0}^{\pi}\int_{0}^{\pi}\left|\phi\left( \frac{T}{\pi}s \right) - \phi\left( \frac{T}{\pi}t \right)\right|^2 P_2(x,s-t)dsdt .
\end{align*}
Here we write $P_2(x,s)=\partial_s P_1(x,s)$.
We write $K_1(x)$ for the first term and $K_2(x)$ for the second term.

\begin{Prop}\label{est rest}
The following \eqref{a} - \eqref{c} hold.
\begin{align}
& \lim_{x\to 1-0}\int_{0}^{\pi} v|P_2(x,v)|dv < \I. ~~~~~~~~~~~~~~~~~~~~~~~~~~~~~~~~~~~~~~~~~~~~~~~~~~~~~~~~~~~~\label{a}\\
& \lim_{x\to 1-0}\int_{0}^{\pi} v P_2(x,v)dv = -\frac{2}{\pi} \label{b} .\\
& \lim_{x\to 1-0} \int_{\delta}^{\pi} v | P_2(x,v) | dv = 0 ~~\text{for all}~\delta > 0 \label{c}.
\end{align}
\end{Prop}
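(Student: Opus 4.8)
The plan is to reduce everything to explicit estimates on the kernel $P_2(x,v)=\partial_v P_1(x,v)$, which by Lemma \ref{P1 est}'s formula is the $v$-derivative of
\[
P_1(x,v)=\frac{2}{\pi}x(1-x)(1-x^2)\frac{\sin v}{(1-2x\cos v+x^2)^2}.
\]
First I would differentiate this expression once more in $v$ to get a closed form for $P_2(x,v)$; the numerator will be a trigonometric polynomial in $\cos v,\sin v$ times $x(1-x)(1-x^2)$, and the denominator $(1-2x\cos v+x^2)^3$. The crucial elementary fact, as in the proof of Lemma \ref{P1 est}, is that $1-2x\cos v+x^2=(1-x)^2+2x(1-\cos v)\ge (1-x)^2$ and is also $\ge cx v^2$ for $v\in[0,\pi]$; combining, $1-2x\cos v+x^2\gtrsim (1-x)^2+v^2$ uniformly in $x$ near $1$. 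This single two-sided bound drives all three claims.

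For \eqref{a}, I substitute the closed form and bound $v|P_2(x,v)|$ above by a constant multiple of $(1-x)\cdot v\cdot\bigl((1-x)^2+v^2\bigr)^{-2}$ (keeping track that the numerator carries a factor $(1-x)$ after extracting $1-x^2\le 2(1-x)$ times a bounded factor, and that any surviving power of $v$ from the numerator only helps). Then $\int_0^\pi (1-x)v\bigl((1-x)^2+v^2\bigr)^{-2}dv$ is computed by the substitution $v=(1-x)u$, giving $(1-x)^{-1}\int_0^{\pi/(1-x)} u(1+u^2)^{-2}du$ — wait, that diverges, so the bookkeeping of powers of $(1-x)$ in the numerator must be done carefully: the correct count is that $P_2$ carries $(1-x)^2$ in front (one from the explicit $1-x$ in $P_1$, one more either from $1-x^2$ or generated on differentiating the denominator factor at the relevant scale), so $v|P_2(x,v)|\lesssim (1-x)^2 v\bigl((1-x)^2+v^2\bigr)^{-5/2}$ or similar, whose integral is uniformly bounded by the same scaling substitution. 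The key step is thus this careful accounting of the $(1-x)$-powers; I expect that to be the main obstacle, since a miscount makes the bound either trivial or false.

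For \eqref{b}, rather than estimating, I integrate by parts: $\int_0^\pi v P_2(x,v)\,dv=\bigl[v P_1(x,v)\bigr]_0^\pi-\int_0^\pi P_1(x,v)\,dv$. The boundary term vanishes at $v=0$ trivially and at $v=\pi$ because $\sin\pi=0$ forces $P_1(x,\pi)=0$. By the computation in the proof of Lemma \ref{P1 est}, $\int_0^\pi P_1(x,v)\,dv\to 2/\pi$ as $x\to1-0$, so the whole expression tends to $-2/\pi$. For \eqref{c}, on $[\delta,\pi]$ the denominator is bounded below by $(1-2x\cos\delta+x^2)^3\to(2-2\cos\delta)^3>0$ while the numerator carries the factor $(1-x)^2\to0$ and everything else is bounded on $[\delta,\pi]$; hence $\int_\delta^\pi v|P_2(x,v)|\,dv=O\bigl((1-x)^2\bigr)\to0$. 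So once the closed form for $P_2$ and the two-sided denominator bound are in hand, \eqref{b} and \eqref{c} are immediate and only \eqref{a} requires the delicate scaling argument.
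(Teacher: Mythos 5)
Your parts \eqref{b} and \eqref{c} are fine: \eqref{b} is the same integration by parts as the paper (whose antiderivative $\int vP_2\,dv = vP_1(x,v)+(1-x)P_0(x,v)$ is exactly your computation, since $\int P_1\,dv=-(1-x)P_0$), and your \eqref{c} is even a little simpler than the paper's, which instead locates the zero $\theta_x$ of $P_2$ and evaluates the integral exactly; your observation that on $[\delta,\pi]$ the denominator is bounded below by a constant depending only on $\delta$ while the prefactor $x(1-x)(1-x^2)$ supplies $O((1-x)^2)$ is a correct and clean replacement.

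The gap is in \eqref{a}, precisely at the step you flagged as delicate. The dominating function you finally assert, $v|P_2(x,v)|\lesssim (1-x)^2 v\left((1-x)^2+v^2\right)^{-5/2}$, is a true pointwise bound but its integral is \emph{not} uniformly bounded: with $v=(1-x)u$ one gets $(1-x)^{4-2\alpha}\int_0^{\pi/(1-x)}u(1+u^2)^{-\alpha}du$, which for $\alpha=5/2$ behaves like $(1-x)^{-1}$ and blows up, so "the same scaling substitution" does not close the argument. The correct accounting is: both powers of $(1-x)$ are already explicit in the prefactor $x(1-x)(1-x^2)$ (differentiating the denominator contributes $\sin v/D$, not another $(1-x)$), and the bracket satisfies $|\cos v|/D^2\le D^{-2}$ and $4x\sin^2v/D^3\lesssim v^2D^{-3}\le D^{-2}$ with $D=(1-x)^2+2x(1-\cos v)\asymp (1-x)^2+v^2$; hence the usable bound is $v|P_2(x,v)|\lesssim (1-x)^2 v\left((1-x)^2+v^2\right)^{-2}$, i.e.\ you must keep the exponent $-2$ from your first attempt and upgrade the prefactor to $(1-x)^2$, not raise the exponent to $-5/2$. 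With $\alpha=2$ the substitution gives $(1-x)^{0}\int_0^\infty u(1+u^2)^{-2}du=\tfrac12$, and \eqref{a} follows. For comparison, the paper avoids any domination argument: it uses the explicit zero $\theta_x$ of $P_2$ (with $\cos\theta_x$ computed in closed form), splits $\int_0^\pi v|P_2|\,dv$ at $\theta_x$, and evaluates everything with the antiderivative $vP_1+(1-x)P_0$, obtaining the exact limit $\tfrac{5}{2\pi}$ — a sharper conclusion (the value $5/\pi$ for $2\int_0^\pi v|P_2|\,dv$ is reused later in the proof of Corollary \ref{K1}), whereas your route, once repaired, yields only finiteness, which is all the proposition itself asserts.
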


As in the proof of Lemma \ref{P1 est} , for all $\phi \in C[0,\pi]$ it follows that 
\[
\lim_{x\to 1-0}\int_{0}^{\pi} v P_2(x,v) \phi(v) dv = -\frac{2}{\pi} \phi(0).
\]

\begin{proof}
Proof of \eqref{a}:~~Let $0 \le \theta_x \le \pi$ be such as $P_2(x,\theta_x)=0$. Since 
\[
P_2(s,x)= \frac{2}{\pi}x(1-x)(1-x^2)\left\{ \frac{\cos s}{(1-2x\cos s+x^2)^2} - \frac{4x\sin^2 s}{(1-2x\cos s+x^2)^3} \right\},
\]
we have
\[
\cos \theta_x = \frac{\sqrt{(1+x^2)^2+32x^2}-(1+x^2)}{4x}.
\]
For fixed $0\le x <1$, hence by $P_2(x,v) \ge 0$ if and only if $0\le v \le \theta_x$, we have 
\[
\int_{0}^{\pi} v|P_2(x,v)|dv = \int_{0}^{\theta_x} vP_2(x,v)dv - \int_{\theta_x}^{\pi} v P_2(x,v)dv.
\]
Here the primitive integral of $vP_2(x,v)$ is calculated by
\begin{align*}
\int v P_2(x,v) dv &= \int v P_1^{\prime}(x,v) dv = v P_1(x,v) - \int P_1(x,v) dv \\
&= v P_1(x,v) + (1-x)\int P^{\prime}_0(x,v) dv = v P_1(x,v) + (1-x) P_0(x,v).
\end{align*}
So we have
\begin{align*}
\int_{0}^{\pi}\int_{0}^{\pi}v|P_2(x,v)|dv &= 2\theta_x P_1(x,\theta_x)+2(1-x) P_0(x,\theta_x) \\
& - (1-x)P_0(x,0) - \pi P_1(x,\pi) -(1-x) P_0(x,\pi)
\end{align*}

For the first two terms, by elementary computation it it shown that
\begin{align*}
\theta_x P_1(x,\theta_x) &= \frac{1}{4\pi} \frac{x}{1+x} \frac{\theta_x}{\sin \theta_x} \frac{6(1+x^2)\sqrt{(1+x^2)^2+32x^2} +10(1+x^2)^2+32x^2}
{(1+x^2)\sqrt{(1+x^2)^2+32x^2} +(1+x^2)^2+8x^2}\\
(1-x) P_0(x,\theta_x) &= \frac{1}{\pi} \frac{3(1+x^2)+\sqrt{(1+x^2)^2+32x^2}}{4(1+x)} \lower1.5ex\hbox{.}
\end{align*}
Since $\theta_x \to +0$ as $x\to 1-0$, we have $\displaystyle 2\theta_x P_1(x,\theta_x) \to \frac{3}{2\pi}$ and 
$\displaystyle 2(1-x) P_0(x,\theta_x) \to \frac{3}{\pi}$ as $x\to 1-0$.   
$\displaystyle (1-x)P_0(x,0) = \frac{1+x}{\pi} \to \frac{2}{\pi}$ and the last two terms are easily shown to converge to $0$.
Combining them all, we have 
\[
\lim_{x\to 1-0}\int_{0}^{\pi} v|P_2(x,v)|dv = \frac{3}{2\pi}+\frac{3}{\pi}-\frac{2}{\pi} = \frac{5}{2\pi} < \I .
\]
Proof of \eqref{b}:~Because of
\begin{align*}
\int_{0}^{\pi} v P_2(x,v)dv = \pi P_1(x,\pi) + (1-x) P_0(x,\pi) -(1-x) P_0(x,0) ,
\end{align*}
we have
\[
\lim_{x\to 1-0} \int_{0}^{\pi} v P_2(x,v)dv = -\lim_{x\to 1-0} (1-x) P_0(x,0) = -\frac{2}{\pi}.
\]

Proof of \eqref{c}:~
Since $\theta_x \to 0$ as $x\to 1-0$, we are able to take $0\le x < 1$ to be $\theta_x < \delta$ for all  $\delta > 0$.
Then  because $P_2(x,v) \le 0$ when $\theta_x \le v \le \pi$, we obtain
\begin{align*}
\int_{\delta}^{\pi} v | P_2(x,v) | dv &= - \int_{\delta}^{\pi}v P_2(x,v)dv  \\
&=\pi P_1(x,\pi) + (1-x)P_0(x,\pi) -\delta P_1(x,\delta) - (1-x)P_0(x,\delta).
\end{align*}
It is easy to check that the last four terms converge to $0$ as $x\to 1-0$, which proves the lemma.
\end{proof}

\begin{Cor}\label{K1}
\[
\displaystyle \lim_{x \to 1-0} K_1(x) = 0.
\]
\end{Cor}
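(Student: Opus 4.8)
The plan is to exploit that, as $x\to 1-0$, the one–dimensional kernel $v\mapsto P_2(x,v)$ concentrates at $v=0$ and, by its $2\pi$–periodicity and evenness, at $v=2\pi$; through the substitution $v=s+t$ this means that the mass of the double integral defining $K_1(x)$ collapses onto the two corners $(s,t)=(0,0)$ and $(s,t)=(\pi,\pi)$ of $[0,\pi]^2$, and at those corners $s$ and $t$ are squeezed together, so that the factor $|\phi(\tfrac{T}{\pi}s)-\phi(\tfrac{T}{\pi}t)|^2$ is small by uniform continuity of $\phi\in C_0[0,T]$. Hence $K_1(x)\to 0$.

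First I would pass to $|P_2(x,s+t)|$ in the integrand (it suffices to estimate the larger quantity), fix $\delta\in(0,\pi)$, and split $[0,\pi]^2$ into the two corner triangles $\{s+t\le\delta\}$, $\{s+t\ge 2\pi-\delta\}$ and the middle band $\{\delta\le s+t\le 2\pi-\delta\}$. The key computational device is the Fubini/layer–cake identity $\int_0^\pi\!\!\int_0^\pi \mathbf{1}_{\{a\le s+t\le b\}}\,g(s+t)\,ds\,dt=\int_a^b \ell(v)\,g(v)\,dv$ with $\ell(v)=\min(v,\pi)-\max(0,v-\pi)\le v$, used together with $P_2(x,2\pi-u)=P_2(x,u)$. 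On the middle band this, after the substitution $w=2\pi-v$ on the part $s+t>\pi$, reduces $\int\!\!\int|P_2(x,s+t)|\,ds\,dt$ to $2\int_\delta^\pi v\,|P_2(x,v)|\,dv$, which tends to $0$ by Proposition \ref{est rest}\eqref{c}; bounding $|\phi(\tfrac{T}{\pi}s)-\phi(\tfrac{T}{\pi}t)|^2$ there crudely by $4\|\phi\|_\infty^2$ is enough. On the triangle $\{s+t\le\delta\}$ one has $|s-t|\le s+t\le\delta$, hence $|\phi(\tfrac{T}{\pi}s)-\phi(\tfrac{T}{\pi}t)|^2\le\omega(\delta)^2$, where $\omega$ denotes the modulus of continuity of $s\mapsto\phi(\tfrac{T}{\pi}s)$ on $[0,\pi]$; and the same identity gives $\int_0^\pi\!\!\int_0^\pi\mathbf{1}_{\{s+t\le\delta\}}|P_2(x,s+t)|\,ds\,dt=\int_0^\delta v\,|P_2(x,v)|\,dv$, which stays bounded by a constant $C$ uniformly for $x$ near $1$ by Proposition \ref{est rest}\eqref{a}. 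For the remaining corner $\{s+t\ge 2\pi-\delta\}$ I would substitute $s\mapsto\pi-s$, $t\mapsto\pi-t$; since $P_2(x,2\pi-u)=P_2(x,u)$ this turns it into an integral of exactly the same form near $(0,0)$, with $\phi$ replaced by $u\mapsto\phi(T-\tfrac{T}{\pi}u)$, which has the same modulus of continuity, so it is again $\le C\,\omega(\delta)^2$.

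Collecting the three contributions yields $\limsup_{x\to 1-0}|K_1(x)|\le \tfrac{\pi}{4T}\cdot 2C\,\omega(\delta)^2$ for every $\delta>0$; letting $\delta\to 0$ and using $\omega(\delta)\to 0$ gives $\lim_{x\to 1-0}K_1(x)=0$. I expect the only delicate point — the rest being routine bookkeeping — to be the correct handling of the change of variables $v=s+t$ on the full square, in particular the behaviour of $P_2(x,\cdot)$ near $v=2\pi$ via its periodicity and evenness; once that reduction is in place, Proposition \ref{est rest}\eqref{a} and \eqref{c} supply everything that is needed.
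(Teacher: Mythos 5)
Your proposal is correct and follows essentially the same route as the paper: the same splitting of $[0,\pi]^2$ into the two corner regions $\{s+t\le\delta\}$, $\{s+t\ge 2\pi-\delta\}$ and the middle band, the same reduction via $v=s+t$ and $P_2(x,v)=P_2(x,2\pi-v)$ to the one-dimensional integrals $\int v|P_2(x,v)|\,dv$, and the same appeal to Proposition \ref{est rest} \eqref{a} and \eqref{c} together with uniform continuity of $\phi$. The only differences (modulus of continuity in place of oscillation near the endpoints, the explicit layer-cake identity) are cosmetic.
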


\begin{proof}
Let $A_{\delta}^1 = \{0 \le s \le \pi,~0\le t \le \pi,~0 \le s+t < \delta \}$, $A_{\delta}^2 = \{0 \le s \le \pi,~0\le t \le \pi,~2\pi -\delta < s+t \le 2\pi \}$
, and $D_{\delta} = \{0 \le s \le \pi,~0\le t \le \pi,~\delta \le s+t \le 2\pi-\delta \}$.  The integral domain of $K_1(x)$ is divided into 
$A_\delta^1$, $A_\delta^2$, and $D_\delta$. We call the integral over $A_\delta^1$, $A_\delta^2$, and $D_\delta$ as $L_1(x), L_2(x)$ and $L_3(x)$ respectively.
We first compute $L_1(x)$.
\[
|L_1(x)| \le \sup_{(s,t)\in A_\delta^1} \left|\phi\left( \frac{T}{\pi}s \right) - \phi\left( \frac{T}{\pi}t \right)\right|^2 \iint_{A_\delta^1} |P_2(x,s+t)| dsdt .
\]
Since $(s,t) \in A_\delta^1$ implies $0 \le s \le  \delta ,~ 0 \le t \le  \delta$ and 
\[
\left|\phi\left( \frac{T}{\pi}s \right) - \phi\left( \frac{T}{\pi}t \right)\right| \le \left|\phi\left( \frac{T}{\pi}s \right) - \phi(0) \right| +
\left|\phi (0) - \phi\left( \frac{T}{\pi}t \right)\right| ,
\]
we have 
\[
\sup_{(s,t)\in A_\delta^1} \left|\phi\left( \frac{T}{\pi}s \right) - \phi\left( \frac{T}{\pi}t \right)\right|^2 \le
4 \sup_{0\le u \le \pi T^{-1}\delta } \left|\phi(u) - \phi(0) \right| ^2 .
\]
On the other hand,  we have
\begin{align*}
\iint_{A_\delta^1} & |P_2(x,s+t)| dsdt \le \int_0^\pi \int_0^\pi  |P_2(x,s+t)| dsdt \\
&=\int_0^{\pi} \int_0^{v} |P_2(x,v)|dsdv + \int_{\pi}^{2\pi} \int_{v-\pi}^{\pi}|P_2(x,v)|dsdv ~~(\text{Set} ~v=s+t) \\
&=\int_{0}^{\pi}v|P_2(x,v)| dv + \int_{\pi}^{2\pi}(2\pi -v)|P_2(x,v)| dv = 2\int_{0}^{\pi}v|P_2(x,v)| dv.
\end{align*}
The last equation follows from $P_2(x,v)=P_2(x,2\pi-v)$. Because of \eqref{a}, the last term converges to $5/\pi$ as $x \to 1-0$.
Thus we have 
\[
| L_1(x) | \le \frac{20}{\pi} \sup_{0\le u \le \pi T^{-1}\delta } \left|\phi(u) - \phi(0) \right| ^2  .
\]
In the same manner we can show that 
\[
| L_2(x) | \le \frac{20}{\pi} \sup_{0\le u \le \pi T^{-1}\delta } \left|\phi(T-u) - \phi(T) \right| ^2  .
\]
We proceed to compute $L_3(x)$. We have
\begin{align*}
L_3(x) &\le 4 \sup_{0\le u \le T} |\phi(u)|^2 \iint_{D_\delta}  |P_2(x,s+t)| dsdt  \\
& =4 \sup_{0\le u \le T} |\phi(u)|^2 \left( \int_{\delta}^{\pi} \int_{0}^{v} |P_2(x,v)|dsdv + \int_{\pi}^{2\pi -\delta} \int_{v-\pi}^{\pi}|P_2(x,v)|dsdv \right) \\
&= 8 \sup_{0\le u \le T} |\phi(u)|^2 \int_{\delta}^{\pi}v|P_2(x,v)| dv.
\end{align*}
Because of \eqref{c} , the last term of the equation converges to $0$ as $x \to 1-0$ for all $\delta > 0$. Taken together,  we have
\[
\lim_{x \to 1-0}|K_1(x)| \le \frac{20}{\pi} \left( \sup_{0\le u \le \pi T^{-1}\delta } \left|\phi(u) - \phi(0) \right| ^2 +\sup_{0\le u \le \pi T^{-1}\delta } \left|\phi(T-u) - \phi(T) \right| ^2 \right) .
\]
Since $\phi$ is uniformly continuous on $[0,T]$, the left hand side goes to $0$ as $\delta \to +0$. Thus we obtain $\displaystyle \lim_{x \to 1-0} K_1(x) = 0$ .
\end{proof}

We now turn to deal with $K_2(x)$.  By letting $v=s-t$, the domain of integration $[0,\pi] \times [0,\pi]$ is transformed to 
$\{ v \le t \le \pi -v,~0 \le v \le \pi \} \cup \{ -v \le t \le \pi +v,-\pi \le v \le 0 \}$. Since the integrand is symmetric with respect to $v=0$, we have  
\begin{equation}
\lim_{x \to 1-0} K_1(x) \le \frac{\pi}{2T}\int_{0}^{\pi}\int_{v}^{\pi-v}\left|\phi\left( \frac{T}{\pi}(t+v) \right) - \phi\left( \frac{T}{\pi}t \right)\right|^2 P_2(x,v) dtdv .
\end{equation}
 
Set
\[
I(v) = -\frac{\pi}{2T} \int_{0}^{\pi-v} \frac{\left|\phi\left( \frac{T}{\pi}(t+v) \right) - \phi\left( \frac{T}{\pi}t \right)\right|^2}{|v|} dt.
\]
Then $K_1(x)$ is written by  
\[
K_2(x) = \int_{0}^{\pi} v P_2(x,v) I(v) dv .
\]
Proposition \ref{est rest} shows that $\displaystyle \lim_{x \to 1-0} K_2(x) = -\frac{2}{\pi} \lim_{v \to 0} I(v)$ if the limit of the right hand side exists.

Let $N$ be a integer. By substituting $\pi/2^N$ for $v$, we have 
\begin{align*}
I\left(\frac{\pi}{2^N}\right) &=  -\frac{\pi}{2T} \int_{0}^{\pi-\frac{\pi}{2^N}} \left| \phi\left(\frac{T}{\pi}t+\frac{T}{2^N}\right) 
-\phi\left(\frac{T}{\pi}t\right) \right|^2 \frac{2^N}{\pi} dt \\
&=-\frac{\pi^2}{2T^2} \int_{0}^{T-\frac{T}{2^N}} \left| \phi\left(t+\frac{T}{2^N}\right) - \phi(t) \right|^2 \frac{2^N}{\pi} dt.
\end{align*}
We will consider the limit of $I(\pi/2^N)$ as $N\to \I$. 

Let $M(>N)$ be an integer. We write $I_{M,N}$ for a Riemann sum approximations to the integral $\displaystyle I\left(\frac{\pi}{2^N}\right)$ such that
the interval $[0,T-\frac{T}{2^N}]$ is divided into $2^{M-N}$ subintervals, all with the same length $\frac{T}{2^M}$, i.e. 
\[
I_{M,N} = -\frac{\pi}{2T} \sum_{k=0}^{(2^N-1)2^{M-N}}\left|\phi \left( \frac{kT}{2^M} + \frac{T}{2^N} \right) -
\phi \left( \frac{kT}{2^M} \right) \right|^2 \frac{2^N}{2^M} .
\]
By letting $k= (p-1)2^{M-N}+r,~p=1,2, \cdots ,2^N-1,~r=1,2, \cdots,2^{M-N}$, we change the order of sum of $I_{M,N}$.
\[
I_{M,N} = -\frac{\pi}{2T} \left\{ \sum_{r=1}^{2^{M-N}}\sum_{p=1}^{2^N-1} \left|\phi \left( \frac{pT}{2^N} + \frac{rT}{2^M} \right) -
\phi \left( \frac{(p-1)T}{2^N} + \frac{rT}{2^M} \right) \right|^2 +  \left|\phi (0) -\phi \left( \frac{T}{2^M} \right) \right|^2  \right\}\frac{2^N}{2^M}
\]
Here we focus on the term
\[
J_r=\sum_{p=1}^{2^N-1} \left|\phi \left( \frac{pT}{2^N} + \frac{rT}{2^M} \right)-\phi \left( \frac{(p-1)T}{2^N} + \frac{rT}{2^M} \right) \right|^2 .
\]

By definition we have 
\begin{align*}
J_r &\le \sup_{|\Delta|\le \frac{T}{2^N} } Q_T(\phi,\Delta) \\
J_r &\ge \inf_{|\Delta|\le \frac{T}{2^N} } Q_T(\phi,\Delta)  
-\left\{ \left|\phi (0) -\phi \left( \frac{rT}{2^M} \right) \right|^2 + \left|\phi (T) 
-\phi \left(T-\frac{(2^{M-N}-r)T}{2^M} \right) \right|^2 \right\} \\
& \ge \inf_{|\Delta|\le \frac{T}{2^N} } Q_T(\phi,\Delta) - \sup_{0\le v\le \frac{T}{2^N}}
\left\{ |\phi(0)-\phi(v)|^2-|\phi(T)-\phi(T-v)|^2 \right\}.
\end{align*}
Thus we have 
\begin{align}
\inf_{|\Delta|\le \frac{T}{2^N} } & Q_T(\phi,\Delta)
-\sup_{0\le v\le \frac{T}{2^N}}\left\{ |\phi(0)-\phi(v)|^2-|\phi(T)-\phi(T-v)|^2 \right\} \notag \\
& \le -\frac{2T}{\pi}I\left( \frac{\pi}{2^N}\right) \le \sup_{|\Delta|\le \frac{T}{2^N} } Q_T(\phi,\Delta) \label{QV2}
\end{align}
as $M\to\I$.  By letting $N\to \I$, we can conclude that
$
\displaystyle\lim_{v\to +0} I(v) = - \frac{\pi \langle \phi \rangle_T}{2T}
$
if the quadratic variation of $\phi$ over $[0,T]$ exists. 
Thus we obtain 
\[
\displaystyle\lim_{x\to 1-0} K_2(x) =  \frac{ \langle \phi \rangle_T}{T},
\]
which proves the theorem \ref{symbol Lap}. 
\begin{flushright}
$\Box$
\end{flushright}

To apply theorem \ref{symbol Lap} to the L\'{e}vy Laplacian for square roots of measures, we need to check $\displaystyle \frac{1}{n} \sum_{k=0}^n \la \phi,e_k \ra^2$ 
converges in the norm topology of $L^2(D^{'}[0,T])$.
\begin{Lem}\label{L4}
Let $f$ be second order Fr\'{e}chet differentiable in the direction of $H_a$, and \\ $|\hat{f}|^2(D{'}[0,T] \setminus C_0[0,T])=0$.
Then 
\[
\int_{C_0[0,T]} |\phi(s) |^4 |\Hat{f}|^2(d\phi) < \I 
\]
for $0 \le s \le T$.
\end{Lem}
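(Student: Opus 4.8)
The plan is to realise the coordinate functional $\Phi_s:\phi\mapsto\phi(s)$ (well defined since $|\Hat{f}|^2$ is concentrated on $C_0[0,T]$) as the multiplier $W(\rho_s)$ attached to $\rho_s:=\{e_n(s)\}_{n=0}^{\I}$, and then to use the second order Fr\'{e}chet differentiability of $f$ to show $W(\rho_s)\in L^4(|\Hat{f}|^2)$. First, $\rho_s\in H_a$ with $\|\rho_s\|_{H_a}^2=\sum_{n=0}^{\I}e_n(s)^2=\la\min(s,\cdot),\min(s,\cdot)\ra_{H_0^1[0,T]}=s\le T$, since $\{e_n(s)\}_n$ is the coordinate sequence of the reproducing kernel $\min(s,\cdot)$ of $H_0^1[0,T]$ (or one checks this directly from the explicit $e_n$).

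For the $L^4$-bound, fix $\rho\in\RIO$. Twice Fr\'{e}chet differentiability gives that $d^2f(\rho,\rho)=\partial_\rho^2 f$ exists with $\|\partial_\rho^2 f\|\le\|d^2f\|\,\|\rho\|_{H_a}^2$; since $df(\rho)=\partial_\rho f$, since $\partial_\rho f$ is again $H_a$-shift continuous, and since $W(\rho)=\la\xi,\rho\ra$ on $\RIO$, iterating $\mathcal{F}(\partial_\rho g)=2\pi\sqrt{-1}\,W(\rho)\times\Hat{g}$ and using $u\times(v\times g)=(uv)\times g$ gives $\mathcal{F}(\partial_\rho^2 f)=-(2\pi)^2\la\xi,\rho\ra^2\times\Hat{f}$, hence
\[
\int_{\RI}|\la\xi,\rho\ra|^4\,d|\Hat{f}|^2=(2\pi)^{-4}\|\partial_\rho^2 f\|^2\le(2\pi)^{-4}\|d^2f\|^2\,\|\rho\|_{H_a}^4 .
\]
Applying this to the truncations $\rho_s^{(N)}=(e_0(s),\ldots,e_N(s),0,\ldots)\in\RIO$, for which $\|\rho_s^{(N)}\|_{H_a}\le\|\rho_s\|_{H_a}$, gives a bound uniform in $N$. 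Since $\rho\mapsto W(\rho)$ is a bounded linear map $H_a\to L^2(|\Hat{f}|^2)$ (from $\|\partial_\rho f\|^2=(2\pi)^2\int|W(\rho)|^2 d|\Hat{f}|^2$ together with $\partial_\rho f=df(\rho)$) and $\rho_s^{(N)}\to\rho_s$ in $H_a$, we have $W(\rho_s^{(N)})\to W(\rho_s)$ in $L^2(|\Hat{f}|^2)$; passing to an a.e.\ convergent subsequence and invoking Fatou's lemma yields $W(\rho_s)\in L^4(|\Hat{f}|^2)$ with $\int|W(\rho_s)|^4\,d|\Hat{f}|^2\le(2\pi)^{-4}\|d^2f\|^2 T^2<\I$.

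It remains to identify $W(\rho_s)$ with $\Phi_s$ in $L^2(|\Hat{f}|^2)$. One cannot use the truncations $\rho_s^{(N)}$ for this, because for a merely continuous $\phi$ the partial sums $\sum_{n\le N}\la\phi,e_n\ra e_n(s)$ need not converge pointwise. Instead I would pass to Ces\`{a}ro means: with $\tilde\rho_s^{(N)}=\frac{1}{N+1}\sum_{k=0}^N\rho_s^{(k)}\in\RIO$ one still has $\tilde\rho_s^{(N)}\to\rho_s$ in $H_a$, hence $W(\tilde\rho_s^{(N)})\to W(\rho_s)$ in $L^2(|\Hat{f}|^2)$. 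On the other hand, using the formula for $\la\phi,e_n\ra$ in terms of $\phi$ recorded just before the definition of $\Delta_L$, one checks $\la\phi,e_n\ra e_n(s)=\beta_n\sin\frac{n\pi s}{T}$ for $n\ge1$, where $\{\beta_n\}$ are the Fourier sine coefficients of $\psi:=\phi(\cdot)-\frac{\phi(T)}{T}(\cdot)$, while $\la\phi,e_0\ra e_0(s)=\frac{\phi(T)}{T}s$; thus $W(\tilde\rho_s^{(N)})(\xi)=\frac{\phi(T)}{T}s+\sigma_N^{\psi}(s)$, which (up to the affine $e_0$-term) is the $N$-th Fej\'{e}r mean of the sine series of the continuous function $\psi$, and $\psi$ vanishes at $0$ and $T$. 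By Fej\'{e}r's theorem $\sigma_N^{\psi}(s)\to\psi(s)$, so $W(\tilde\rho_s^{(N)})(\xi)\to\phi(s)=\Phi_s(\xi)$ for every $\xi$ corresponding to a $\phi\in C_0[0,T]$, i.e.\ $|\Hat{f}|^2$-a.e. Matching this pointwise limit with the $L^2$-limit along a subsequence gives $W(\rho_s)=\Phi_s$ $|\Hat{f}|^2$-a.e., and therefore $\int_{C_0[0,T]}|\phi(s)|^4\,|\Hat{f}|^2(d\phi)=\int|W(\rho_s)|^4\,d|\Hat{f}|^2<\I$ for all $0\le s\le T$. (An Abel--Poisson summation in the spirit of the proof of Theorem \ref{symbol Lap} works equally well; the endpoints $s=0$, where $\rho_0=0$, and $s=T$, where $\rho_T=(\sqrt{T},0,\ldots)$, are immediate.)

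The step I expect to be the main obstacle is this identification $W(\rho_s)=\Phi_s$: the naive formula $\phi(s)=\la\xi,\rho_s\ra$ holds literally only on $H_0^1[0,T]$ (i.e.\ for square-summable $\xi$), whereas $|\Hat{f}|^2$ is carried by the strictly larger space $C_0[0,T]$, so one is forced to route the argument through a summability method, combining the linearity and $H_a$-continuity of $W$ with the classical Ces\`{a}ro (or Abel) summability of the Fourier series of a continuous function. The remaining ingredients — the iterated Fourier/multiplier computation, the Fatou passage to the limit, and the handling of the affine $e_0$-term — are routine.
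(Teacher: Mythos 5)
Your proposal is correct, and it reaches the bound by a genuinely different construction than the paper. The paper does not expand point evaluation in the CONS at all: it picks the mollified ramp directions $\eta^{\epsilon}_s \in H_0^1[0,T]$ (linear up to $s-\epsilon$, quadratically flattened, constant after $s$), computes by integration by parts that $\la \phi, \eta^{\epsilon}_s \ra = \frac{1}{\epsilon}\int_{s-\epsilon}^{s}\phi(u)\,du$ for $\phi \in C_0[0,T]$, writes $\| \partial^2_{\eta^{\epsilon}_s} f \|^2 = (2\pi)^4 \int \bigl( \frac{1}{\epsilon}\int_{s-\epsilon}^{s}\phi(u)du \bigr)^4 |\Hat{f}|^2(d\phi)$, and then uses boundedness of this quantity as $\epsilon \to 0$ (second order Fr\'{e}chet differentiability) together with $\frac{1}{\epsilon}\int_{s-\epsilon}^{s}\phi \to \phi(s)$ and a Fatou-type passage. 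You instead take the reproducing-kernel direction $\rho_s=\{e_n(s)\}$ (i.e. $\min(s,\cdot)$) and approximate it by Ces\`{a}ro means of its truncations, using Fej\'{e}r's theorem to identify the pointwise limit with $\phi(s)$. The skeleton is the same in both cases — the fourth-moment identity $\|\partial^2_\rho f\|^2=(2\pi)^4\int |W(\rho)|^4 d|\Hat{f}|^2$ plus Fatou — but the approximating directions differ. What your route buys is that the quantitative step is carried out entirely for $\rho \in \RIO$, where $W(\rho)=\la \xi,\rho\ra$ holds literally by property (3), and the identification of the abstract multiplier with point evaluation on the (merely continuous) support is then made explicit through Fej\'{e}r summability; the paper's proof silently identifies $W(\eta^{\epsilon}_s)$ with $\phi \mapsto \frac{1}{\epsilon}\int_{s-\epsilon}^{s}\phi(u)du$ on $C_0[0,T]$ even though $\eta^{\epsilon}_s \notin \RIO$, a step of exactly the kind you flag as the main obstacle. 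What the paper's route buys is elementarity: the pointwise limit of local averages is immediate from continuity, with no appeal to classical Fourier-series summability, and no need for the coefficient computation relating $\la\phi,e_n\ra e_n(s)$ to the sine series of $\phi - \frac{\phi(T)}{T}(\cdot)$ (which you verify correctly). One shared caveat: your norm computation $\|\rho_s\|^2_{H_a}=\sum_n e_n(s)^2=s$ presupposes the identification of $H_a$ with $H_0^1[0,T]$ via the CONS (i.e. $a_n\equiv 1$); this is indeed the intended reading, and the paper's own proof makes the same tacit identification when it differentiates twice in the direction $\eta^{\epsilon}_s \in H_0^1[0,T]$, so it is not a defect of your argument relative to the paper. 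Likewise, your use of the equality $\int |W(\rho)|^4 d|\Hat{f}|^2=(2\pi)^{-4}\|\partial^2_\rho f\|^2$ (rather than only the inequality furnished by the Riesz construction of the multiplication operator) is on the same footing as the paper's repeated use of such Parseval-type identities, e.g. in its own display for $\|\partial^2_{\eta^{\epsilon}_s} f\|^2$.
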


\begin{proof}
Let  $\eta^{\epsilon}_s \in H_0^1[0,T] (0 \le s \le T,~\epsilon > 0)$ to be the function
\[
\eta^{\epsilon}_s(u) = \begin{cases} 
                  u & (0\le u \le s-\epsilon) \\
                  -\frac{1}{2\epsilon}(u-s)^2+s-\frac{\epsilon}{2} &  (s-\epsilon \le u \le s) \\
                  s-\frac{\epsilon}{2} & (s\le u \le T).
                 \end{cases} 
\]
Since $\eta^{\epsilon}_s$ is second order differentiable, we have
\[
\langle \phi, \eta^{\epsilon}_s \rangle =  \left[ \phi(u) \dot{\eta}^{\epsilon}_s(u) \right]_0^T - \int_{0}^T \phi(u) \ddot{\eta}^{\epsilon}_s(u) du 
= \frac{1}{\epsilon} \int_{s-\epsilon}^{\epsilon} \phi(u)du 
\]
for $\phi \in C_0[0,T]$. Thus we obtain
\[
\| \partial^2_{\eta^{\epsilon}_s} f \|^2 = (2\pi)^4 \int_{C_0[0,T]} \left( \frac{1}{\epsilon} \int_{s-\epsilon}^{\epsilon} \phi(u)du \right)^4 |\Hat{f}|^2(d\phi).
\]
Because $f$ is second order Fr\'{e}chet differentiable in the direction of $H_a$, the left hand side converges to $d^2 f (\eta_s,\eta_s)$ as $\epsilon \to 0$.
Here
\[
\eta_s(u) = \begin{cases} 
                  u & (0\le u \le s) \\
                  s &  (s \le u \le T) .
                 \end{cases} 
\]
Hence by 
\[
\lim_{\epsilon \to +0}\frac{1}{\epsilon} \int_{s-\epsilon}^{\epsilon} \phi(u)du = \phi(s) 
\]
for $\phi \in C_0[0,T]$, we have
\[
\lim_{\epsilon \to 0} (2\pi)^4 \int_{C_0[0,T]} \left( \frac{1}{\epsilon} \int_{s-\epsilon}^{\epsilon} \phi(u)du \right)^4 |\Hat{f}|^2(d\phi) \ge (2\pi)^4 \int_{C_0[0,T]} |\phi(s) |^4 |\Hat{f}|^2(d\phi) ,
\]
which proves the lemma.
\end{proof}

\begin{Th}\label{LL}
Let $f\in L^2(D^{'}[0,T])$ be twice differentiable in any direction of $H_0^1[0,T]$, let
$A$ be the totality of $\phi \in C_0[0,T]$ such that the quadratic variation of $\phi$ exists
and let $D(\Delta_L)$ be 
\[
D(\Delta_L) = \{ f\in L^2(D^{'}[0,T]) : ~|\Hat{f}|^2(D^{'}[0,T] \setminus A)=0,~\int_{A} \langle \phi \rangle^2 |\Hat{f}|^2(d\phi) < \infty \} .
\]
If $f \in D(\Delta_L)$, then $\Delta_L f$ exists and it follows that
\[
\mathcal{F} (\Delta_L f) = - (2\pi)^2 \frac{\langle \cdot \rangle_T}{T} \times \hat{f} .
\]
\end{Th}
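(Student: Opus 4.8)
The plan is to transport the whole statement through the Fourier transform, under which the L\'{e}vy Laplacian becomes multiplication by the symbol $-(2\pi)^2\langle\cdot\rangle_T/T$, and then to combine the pointwise limit of Theorem \ref{symbol Lap} with a dominated-convergence estimate against $|\hat f|^2$.

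First I would record the Fourier image of each $\partial^2_{e_k}f$. Under the identification of $D'[0,T]$ with $\mathbf{s}'$, the direction $e_k$ corresponds to a unit coordinate vector of $\RIO$, so the function $W(e_k)$ attached to $f$ in Section 4 is the coordinate functional $\phi\mapsto\la\phi,e_k\ra$. Applying the compatibility relation $\mathcal{F}(\partial_\rho g)=2\pi\sqrt{-1}\,W(\rho)\times\hat g$ first to $g=f$ and then to $g=\partial_{e_k}f$ (which is again $H_0^1[0,T]$-shift continuous, and differentiable in the direction $e_k$ by hypothesis), together with property $(5)$ of the multiplication operator, gives
\[
\mathcal{F}(\partial^2_{e_k}f)=(2\pi\sqrt{-1})^2\,\la\cdot,e_k\ra^2\times\hat f=-(2\pi)^2\,\la\cdot,e_k\ra^2\times\hat f .
\]
The point needing care here is that $\la\cdot,e_k\ra^2\in L^2(|\hat f|^2)$, i.e.\ $\int\la\phi,e_k\ra^4\,d|\hat f|^2<\infty$; writing $\la\phi,e_k\ra$ through the boundary term $\phi(T)$ and the integral $\int_0^\pi\phi(\tfrac T\pi s)\,k\sin ks\,ds$, bounding the latter by $\tfrac{\pi k}{T}\|\phi\|_{L^1[0,T]}$, and invoking Lemma \ref{L4} (for $\int\phi(T)^4\,d|\hat f|^2$, and, via Tonelli, for $\int\|\phi\|_{L^1[0,T]}^4\,d|\hat f|^2\le T^3\int_0^T\int|\phi(u)|^4\,d|\hat f|^2\,du$), this is finite.

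Next I would set $g_n=\tfrac1n\sum_{k=0}^n\partial^2_{e_k}f$ and $u_n(\phi)=-\tfrac{(2\pi)^2}{n}\sum_{k=0}^n\la\phi,e_k\ra^2$, so that $\mathcal{F}(g_n)=u_n\times\hat f$ by linearity of the multiplication. Put $u(\phi)=-(2\pi)^2\langle\phi\rangle_T/T$; since $f\in D(\Delta_L)$ one has $u\in L^2(|\hat f|^2)$, so $g:=\mathcal{F}^{-1}(u\times\hat f)$ belongs to $L^2(D'[0,T])$ and $\mathcal{F}(g_n)-\mathcal{F}(g)=(u_n-u)\times\hat f$. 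Because $\mathcal{F}$ is isometric and $\|w\times\hat f\|\le(\int|w|^2\,d|\hat f|^2)^{1/2}$ (Proposition \ref{Sc eq}), it suffices to prove $\int|u_n-u|^2\,d|\hat f|^2\to0$. By the definition of $D(\Delta_L)$, $|\hat f|^2$ is carried by the set $A$ of $\phi$ with finite quadratic variation, and on $A$ Theorem \ref{symbol Lap} gives $\tfrac1n\sum_{k=0}^n\la\phi,e_k\ra^2\to\langle\phi\rangle_T/T$; hence $u_n\to u$ pointwise $|\hat f|^2$-a.e.

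The hard part will be upgrading this a.e.\ convergence to $L^2(|\hat f|^2)$ convergence, i.e.\ producing a fixed $|\hat f|^2$-square-integrable majorant of $\sup_n\tfrac1n\sum_{k=0}^n\la\phi,e_k\ra^2$ so that dominated convergence applies. My plan for the majorant has three ingredients: (i) the elementary comparison $\tfrac1n\sum_{k=0}^n s_k\le 3\sup_{0\le x<1}(1-x)\sum_k s_k x^k$ for $s_k\ge0$ (take $x=1-1/n$), which reduces the Ces\`{a}ro means to the Abel means; (ii) the representation of the Abel mean $(1-x)\sum_k\la\phi,e_k\ra^2 x^k$ through the kernels $P_0,P_1,P_2$ from the proof of Theorem \ref{symbol Lap}, combined with the uniform bounds $\sup_x\int_0^\pi P_1(x,v)\,dv<\infty$ and $\sup_x\int_0^\pi v|P_2(x,v)|\,dv<\infty$ of Proposition \ref{est rest}, to bound it by $C\bigl(\phi(T)^2+\|\phi\|_\infty^2+\langle\phi\rangle_T+1\bigr)$ uniformly in $x$ for $\phi\in A$; and (iii) checking this majorant lies in $L^2(|\hat f|^2)$, using $\int_A\langle\phi\rangle_T^2\,d|\hat f|^2<\infty$ together with the moment bounds from the differentiability hypothesis. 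The genuinely delicate item is the $\|\phi\|_\infty^2$ term, since $\phi\in A$ alone does not bound $\|\phi\|_\infty$; here I would use that the polarization of $\mathcal{F}(\partial^2_\rho f)=-(2\pi)^2\,W(\rho)^2\times\hat f$ yields $\int W(\rho)^4\,d|\hat f|^2\le C\|\rho\|_{H_0^1}^4$, specialize $\rho$ to the reproducing-kernel curve $\eta_s$ (with $\|\eta_s\|_{H_0^1}^2=s\le T$ and $\|\eta_s-\eta_{s'}\|_{H_0^1}^2=|s-s'|$, so that $s\mapsto\phi(s)=W(\eta_s)(\phi)$ has $\tfrac12$-H\"{o}lder increments in $L^4(|\hat f|^2)$), and conclude $\int\|\phi\|_\infty^4\,d|\hat f|^2<\infty$ by a Kolmogorov/Garsia--Rodemich--Rumsey chaining estimate. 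Granting the majorant, dominated convergence gives $\int|u_n-u|^2\,d|\hat f|^2\to0$, hence $g_n\to g$ in $L^2(D'[0,T])$; therefore $\Delta_L f$ exists, equals $g$, and $\mathcal{F}(\Delta_L f)=u\times\hat f=-(2\pi)^2\tfrac{\langle\cdot\rangle_T}{T}\times\hat f$.
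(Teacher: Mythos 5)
Your overall route coincides with the paper's: pass to the Fourier side, where $\tfrac1n\sum_{k\le n}\partial^2_{e_k}f$ becomes multiplication by $-\tfrac{(2\pi)^2}{n}\sum_{k\le n}\la\cdot,e_k\ra^2$, invoke Theorem \ref{symbol Lap} for the pointwise limit on $A$, and reduce the whole statement to $\int_A|u_n-u|^2\,d|\Hat{f}|^2\to0$. The paper does exactly this, but it performs the interchange of limit and integral differently: it splits the Abel mean into $K_1(x)$ and $K_2(x)$, reuses the pointwise estimates from Corollary \ref{K1} and the two--sided bound \eqref{QV2}, brings in Lemma \ref{L4} for the fourth moments, and lets the auxiliary cut--off parameters tend to zero by monotonicity; it never claims a single $x$-uniform (or $n$-uniform) integrable majorant of the means, which is the device your plan rests on.

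And that device is where your argument has a genuine gap. In ingredient (ii) you assert that $(1-x)\sum_k\la\phi,e_k\ra^2x^k$ is bounded, uniformly in $x$, by $C\bigl(\phi(T)^2+\|\phi\|_\infty^2+\langle\phi\rangle_T+1\bigr)$ for $\phi\in A$. The contributions of $\phi^2(T)$, $I_1(x)$, $J_2(x)$ and $K_1(x)$ do admit such a bound via $\sup_x\int_0^\pi P_1(x,v)\,dv<\infty$ and $\sup_x\int_0^\pi v|P_2(x,v)|\,dv<\infty$; but the remaining term $K_2(x)=\int_0^\pi vP_2(x,v)I(v)\,dv$ can only be estimated through $\sup_{0<v\le T}|I(v)|$, and by \eqref{QV2} this is governed by $\sup_{0<v\le T}\sup_{|\Delta|\le v}Q_T(\phi,\Delta)$, i.e.\ by the supremum of the quadratic Riemann sums over \emph{all} partitions. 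For $\phi\in A$ this supremum is finite, but it is not dominated by any universal multiple of $\langle\phi\rangle_T+\|\phi\|_\infty^2$: before the mesh is small (how small depends on $\phi$), the quadratic sums may be arbitrarily large compared with their limit. So the claimed majorant is not established (and is false as stated), and the square-integrability of the true majorant $\sup_n u_n$ does not follow from $f\in D(\Delta_L)$, which only provides $\int_A\langle\phi\rangle_T^2\,d|\Hat{f}|^2<\infty$. Thus the dominated-convergence step --- precisely the part you flag as hard --- is left open. Your Abel--Ces\`{a}ro comparison and the Garsia--Rodemich--Rumsey argument for $\int\|\phi\|_\infty^4\,d|\Hat{f}|^2<\infty$ are sound, and the latter is even a useful supplement (the paper also needs $\int_A\sup_u|\phi(u)|^4\,d|\Hat{f}|^2<\infty$), but they do not repair the $K_2$ term; to complete the proof you should instead follow the paper's scheme of estimating $K_1(x)$ and $K_2(x)-\langle\phi\rangle_T/T$ separately and exploiting monotonicity in the cut-off parameters rather than a global majorant.
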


\begin{proof}
It is sufficient to show that 
\begin{equation}\label{eqr}
\lim_{x \to 1-0} \int_{A} \left| K_1(x) + K_2(x) - \frac{\langle \phi \rangle_T}{T} \right|^2 |\Hat{f}|^2(d\phi)=0
\end{equation}
for $f \in D(\Delta_L)$. 
We first show that $\displaystyle \lim_{x \to 1-0} \int_{A} |K_1(x)|^2 |\Hat{f}|^2(d\phi) =0$. Hence by the proof of Lemma \ref{K1}, we have
\begin{align*}
\lim_{x\to 1-0} \int_A |K_1(x)|^2 |\Hat{f}|^2(d\phi) & \le C  \int_A \sup_{0\le u \le \pi T^{-1}\delta } \left( |\phi(u) - \phi(0) | ^4 +|\phi(T-u) - \phi(T) | ^4 \right) |\hat{f}|^2(d\phi) \\
 & + C \lim_{x \to 1-0} \int_A \sup_{0\le u \le T} |\phi(u)|^4 |\hat{f}|^2(d\phi) \left( \int_{\delta}^{\pi} v |P_2(x,v)|dv \right)^2
\end{align*}
for some $C>0$ and for all $\delta > 0$. Lemma \ref{L4} shows that the above integrals are finite. 
Since $\sup_{0\le u \le \pi T^{-1}\delta } \left( |\phi(u) - \phi(0) | ^4 +|\phi(T-u) - \phi(T) |  \right)$ is monotonically decreasing to $0$ as $\delta \to +0$, the first term goes to $0$. 
Because of \eqref{c}, the second term converges to $0$ as $x \to +0$.

We next show that $\displaystyle\lim_{x\to 1-0} \int_A \left|K_2(x) -\frac{\langle \phi \rangle_T}{T} \right|^2 |\Hat{f}|^2(d\phi) =0$. Hence by \eqref{QV2}, 
it follows that
\begin{align*}
\lim_{x\to1-0} \left|K_2(x) -\frac{\langle \phi \rangle_T}{T} \right|  & \le \lim_{v \to +0} \left| \sup_{|\Delta|\le v } Q_T(\phi,\Delta)- \frac{\langle \phi \rangle_T}{T} \right| 
+\lim_{v \to +0} \left| \inf_{|\Delta|\le v } Q_T(\phi,\Delta)- \frac{\langle \phi \rangle_T}{T} \right|  \\ 
&+  \lim_{v \to +0} \sup_{0\le u \le v } \left( |\phi(u) - \phi(0) |  +|\phi(T-u) - \phi(T) |  \right) \\
&= \lim_{v \to +0}\sup_{|\Delta|\le v } Q_T(\phi,\Delta) - \lim_{v \to +0}\inf_{|\Delta|\le v } Q_T(\phi,\Delta)  \\
&+ \lim_{v \to +0} \sup_{0\le u \le v } \left( |\phi(u) - \phi(0) |  +|\phi(T-u) - \phi(T) |  \right).
\end{align*}
Since the last term is monotonically decreasing to $0$ as $\delta \to +0$, monotone convergence theorem shows the desired conclusion.  Taken together, 
\eqref{eqr} is proven.
\end{proof}

Our next objective is to characterize the L\'{e}vy Laplacian for square roots of measures via asymptotic spherical mean.
Let  $S_n$ be the $n$-dimensional unit sphere and $\mu_n$ be the normalized uniform measure on $S_n$. 

\begin{Defi}
Let $f\in L^2(C_0[0,T])$ be $H_0^1[0,T]$-shift continuous. Via Bochner integral of $\tau_{\rho h^{(n)}} f$ over $S_n$, we define 
\[
M^n_{\rho}f =  \int_{S_{n-1}} \tau_{\rho h^{(n)}} f  d \mu_{n-1} .
\]
Here we write $\displaystyle h^{(n)}  = \sum_{k=1}^{n}  h_k e_k$. 
If $M^n_\rho f$ converges some square root of a measure as $n\to\I$ in the norm topology of $L^2(D^{'}[0,T])$, this is called the  
asymptotic spherical mean of $f$ over the sphere of radius $\rho$
and it is written by $M_\rho f$.
\end{Defi}

\begin{Prop}\label{asm}
Let $f\in L^2(C_0[0,T])$ be $H_0^1[0,T]$-shift continuous and $|\Hat{f}|^2(D^{'}[0,T] \setminus A)=0$. Then
 $M^n_\rho f$ converges to the spherical mean $M_\rho f$ in the norm topology of $L^2(D^{'}[0,T])$ as $n \to \I$ and it follows that
\[
\mathcal{F} (M_\rho f) = \left( e^{-2\pi^2 \rho^2 T^{-1} \la \cdot \ra_T} \hat{f} \right) .
\] 
 \end{Prop}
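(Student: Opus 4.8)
\emph{Proof proposal.} The plan is to compute $\mathcal{F}(M^n_\rho f)$ as a multiplication operator, identify its pointwise limit through the classical asymptotics of spherical averages of exponentials, and conclude by dominated convergence using that $\mathcal{F}$ is an isometry. Since $\mathcal{F}$ is linear and bounded, it commutes with the Bochner integral, so
\[
\mathcal{F}(M^n_\rho f) = \int_{S_{n-1}} \mathcal{F}(\tau_{\rho h^{(n)}} f)\, d\mu_{n-1}(h) = \int_{S_{n-1}} \bigl( E(\rho h^{(n)}) \times \hat{f}\bigr)\, d\mu_{n-1}(h).
\]
Each $e_k$ corresponds under the identification to an element of $\RIO$, so property $(3)$ of $W$, additivity of $W$, and property $(4)$ give $E(\rho h^{(n)})(\phi) = \exp\!\bigl(2\pi\sqrt{-1}\,\rho\sum_{k=1}^n h_k\la\phi, e_k\ra\bigr)$. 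Testing against an arbitrary $\hat g$ and applying Fubini's theorem --- legitimate because $|E(\rho h^{(n)})|\equiv 1$, $\mu_{n-1}$ is a probability measure, and $\hat f\cdot\bar{\hat g}$ is a finite measure --- one gets $\mathcal{F}(M^n_\rho f) = w_n\times\hat f$, where $w_n$ is the bounded Borel function
\[
w_n(\phi) = \int_{S_{n-1}} \exp\!\Bigl(2\pi\sqrt{-1}\,\rho\sum_{k=1}^n h_k\la\phi, e_k\ra\Bigr) d\mu_{n-1}(h).
\]

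Next I would evaluate $w_n$. By rotational invariance of $\mu_{n-1}$, $w_n(\phi)$ depends only on $r_n(\phi)^2 := \sum_{k=1}^n \la\phi,e_k\ra^2$ and equals $\Lambda_n\!\bigl(2\pi\rho\, r_n(\phi)\bigr)$, where $\Lambda_n(s) = \int_{S_{n-1}} e^{\sqrt{-1}\, s\, h_1}\, d\mu_{n-1}(h)$ is the normalized Bessel function; $\Lambda_n$ is real, even, bounded by $1$, and satisfies the classical limit $\Lambda_n(\sqrt n\,\sigma_n) \to e^{-\sigma^2/2}$ whenever $\sigma_n\to\sigma$ (obtained from $\sqrt n\, h_1 \Rightarrow N(0,1)$ by a dominated-convergence estimate on the tails). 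For $\phi \in A$, Theorem~\ref{symbol Lap} gives $\tfrac1n r_n(\phi)^2 \to \langle\phi\rangle_T/T$ --- dropping the single term $\la\phi,e_0\ra^2$ does not affect the Ces\`aro limit --- so with $\sigma_n = 2\pi\rho\sqrt{r_n(\phi)^2/n}$ one obtains
\[
w_n(\phi) \longrightarrow \exp\!\Bigl(-\tfrac12(2\pi\rho)^2\,\tfrac{\langle\phi\rangle_T}{T}\Bigr) = e^{-2\pi^2\rho^2 T^{-1}\langle\phi\rangle_T}
\]
for $|\hat f|^2$-a.e.\ $\phi$, since $|\hat f|^2(D^{'}[0,T]\setminus A)=0$.

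Finally, set $w(\phi) = e^{-2\pi^2\rho^2 T^{-1}\langle\phi\rangle_T}$, again a bounded Borel weight. From the identity $\|u\times\hat f - v\times\hat f\|^2 = \int |u-v|^2\, d|\hat f|^2$ (a consequence of the defining properties of the multiplication operator), the pointwise convergence $w_n\to w$ $|\hat f|^2$-a.e.\ together with $|w_n|\le 1$ and the finiteness of $|\hat f|^2$, dominated convergence yields $\|w_n\times\hat f - w\times\hat f\|\to 0$. Since $\mathcal{F}$ is an isometry and $M^n_\rho f = \mathcal{F}^{-1}(w_n\times\hat f)$, the sequence $M^n_\rho f$ converges in $L^2(D^{'}[0,T])$; its limit is by definition $M_\rho f$, and $\mathcal{F}(M_\rho f) = w\times\hat f = e^{-2\pi^2\rho^2 T^{-1}\langle\cdot\rangle_T}\times\hat f$, which is the assertion.

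I expect the delicate point to be the spherical asymptotics in precisely the form needed: $\Lambda_n(\sqrt n\,\sigma_n)\to e^{-\sigma^2/2}$ with the ``radius'' $\sigma_n$ available only through the Ces\`aro limit of Theorem~\ref{symbol Lap} rather than an ordinary limit of the partial sums $\sum_{k\le n}\la\phi,e_k\ra^2$, and in verifying that the $|\hat f|^2$-null set off which convergence holds can be chosen independently of $n$. The remaining manipulations --- commuting $\mathcal{F}$ with the Bochner integral, the Fubini step, and the concluding dominated-convergence argument --- are routine given the machinery already in place.
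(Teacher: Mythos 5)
Your proposal is correct and follows essentially the same route as the paper: commute $\mathcal{F}$ with the Bochner integral, observe that $\mathcal{F}(M^n_\rho f)$ is multiplication of $\hat f$ by the spherical average $I_n(\phi)=\int_{S_{n-1}}e^{2\pi\sqrt{-1}\rho\la h^{(n)},\phi\ra}d\mu_{n-1}$, identify its pointwise limit $e^{-2\pi^2\rho^2T^{-1}\la\phi\ra_T}$ on $A$ via Theorem \ref{symbol Lap}, and conclude by dominated convergence using $|I_n|\le 1$ and $\|(u-v)\times\hat f\|^2=\int|u-v|^2\,d|\hat f|^2$. The only real divergence is in the step you correctly flag as delicate: the paper proves the Bessel-type asymptotics by hand, expanding $2\frac{w_{n-2}}{w_{n-1}}\int_0^1(1-x^2)^{n/2-1}\cos(2\pi\rho r_n x)\,dx$ into a Gamma-function series, invoking Wallis' formula, and controlling the correction factors through the estimate $1-b_k\le k^2/n$, which automatically yields the limit along the Ces\`aro-type radii $r_n^2/n\to\la\phi\ra_T/T$; you instead cite the classical Poincar\'e-lemma fact $\sqrt{n}\,h_1\Rightarrow N(0,1)$ and uniform-on-compacts convergence of characteristic functions to get $\Lambda_n(\sqrt{n}\,\sigma_n)\to e^{-\sigma^2/2}$ for $\sigma_n\to\sigma$. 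Your version is shorter and more conceptual but imports that external fact (and the uniformity in $\sigma_n$), whereas the paper's explicit computation is self-contained. Also note that your worry about the exceptional null set is vacuous: convergence of $I_n(\phi)$ holds for every $\phi\in A$, and $A$ is fixed with $|\hat f|^2(D^{'}[0,T]\setminus A)=0$ by hypothesis, exactly as in the paper.
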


\begin{proof}
Let $f\in L^2(C_0[0,T])$ be $H_0^1[0,T]$-shift continuous and $|\Hat{f}|^2(D^{'}[0,T] \setminus A)=0$. Then we have
\begin{align*}
\mathcal{F}(M^n_\rho f) &= \mathcal{F} \left( \int_{S_{n-1}} \tau_{ \rho h^{(n)} } f  d \mu_{n-1} \right) =  \int_{S_{n-1}} \mathcal{F} (\tau_{\rho h^{(n)}} f )  d \mu_{n-1} \\
&= \int_A \int_{S^{n-1}} e^{2\pi \sqrt{-1} \rho \la h^{(n)} , \phi \ra } \mu_{n-1}(dh^{(n)}) |\Hat{f}|^2(d\phi)
\end{align*}

Let $w_n$  be the volume of the surface area of $S_n$ and $\displaystyle r^2_n = \sum_{k=1}^{n}  \la \phi , e_k \ra^2$. Choosing some adequate unitary transform on $S_n$, we have
\begin{align*}
\int_{S_{n-1}} & e^{2\pi \sqrt{-1} \rho\la h^{(n)}, ~\phi \ra } \mu(d h^{(n)}) = \frac{1}{w_{n-1}}\int_{S_{n-1}} e^{2\pi \sqrt{-1}\rho r_n h_n } d h_1\cdots d h_n  \\
&= \frac{w_{n-2}}{w_{n-1}} \int_{-1}^{1} (1-x^2)^{\frac{n}{2} -1} e^{2\pi\sqrt{-1}\rho r_n x } dx  = 2 \frac{w_{n-2}}{w_{n-1}} \int_{0}^{1} (1-x^2)^{\frac{n}{2} -1} \cos (2\pi \rho r_n x)  dx .
\end{align*}

Let $I_n(\phi)$ be the right hand side of the above equation. Then we have
\begin{align*}
I_n(\phi) &= 2\frac{w_{n-2}}{w_{n-1}}  \sum_{k=0}^{\I} \frac{(-4\pi^2  \rho^2 r_n^2 )^{k}}{(2k)!} \int_{0}^{1} x^{2k} (1-x^2)^{\frac{n}{2} -1}  dx  \\
&= \frac{w_{n-2}}{w_{n-1}}  \sum_{k=0}^{\I} \frac{(-4\pi^2 \rho^2 r_n^2 )^{k}}{(2k)!} \int_{0}^{1} t^{k-\frac{1}{2}} (1-t)^{\frac{n}{2} -1}  dt ~~(\text{Set}~t=x^2)  \\
&=\frac{1}{\sqrt{\pi}} \sum_{k=0}^{\I} \frac{(-4\pi^2  \rho^2 r_n^2)^{k}}{(2k)!}\frac{\Gamma(k+ \frac{1}{2} )\Gamma^2\left(\frac{n}{2}\right)}  {\Gamma\left(k + \frac{n+1}{2} \right) \Gamma\left(\frac{n-1}{2}\right) }   \\
&= \sum_{k=0}^{\I} \frac{(-\pi^2 \rho^2 r_n^2)^{k}}{k!}\frac{\Gamma^2\left(\frac{n}{2}\right)}  { \left(k+\frac{n-1}{2} \right) \left(k+\frac{n-3}{2} \right) \cdots \frac{n+1}{2}  \Gamma\left(\frac{n+1}{2}\right)\Gamma\left(\frac{n-1}{2}\right) }  \\
&=\frac{ \Gamma^2 \left( \frac{n}{2} \right)}{ \Gamma \left(\frac{n+1}{2} \right) \Gamma\left(\frac{n-1}{2}\right)} \sum_{k=0}^{\I} \frac{b_k}{k!} \left(-\frac{2\pi^2 \rho^2 r_n^2}{n} \right)^k  .
\end{align*}
Here we write
\[
b_0=1,~~b_k= \prod_{j=1}^k \left( 1+\frac{2j-1}{n} \right)^{-1} (k \ge 1) .
\]
Since by Wallis' formula, it follows that
\[
\lim_{n\to\I} \frac{ \Gamma^2 \left( \frac{n}{2} \right)}{ \Gamma \left(\frac{n+1}{2} \right) \Gamma\left(\frac{n-1}{2}\right)} = 1 .
\]
Because of 
\[
b_k-b_{k+1} = b_k\frac{2k-1}{n} \left( 1+ \frac{2k-1}{n} \right)^{-1} \le \frac{2k-1}{n} ,
\]
we have $1-b_k \le k^2/n~(k \ge 0)$. So letting $\displaystyle x_n = -\frac{2\pi^2 \rho^2 r_n^2}{n}$ we have 
\[
\biggl| \sum_{k=0}^{\I} \frac{1-b_k}{k!} x_n^k \biggr|  \le \frac{1}{n} \sum_{k=0}^{\I} \frac{k}{(k-1)!} |x_n|^k = \frac{1}{n} (|x_n| +1) e^{|x_n|} . \\
\]
By assumption the right hand side converges to $0$ for $\phi \in A$.
Thus it follows that 
\begin{equation}\label{ae conv1}
\lim_{n\to\I} \sum_{k=0}^{\I} \frac{b_k}{k!} x_n^k = \lim_{n\to\I} \sum_{k=0}^{\I} \frac{1}{k!} x_n^k = e^{-2\pi^2 \rho^2 T^{-1} \langle \phi \rangle_T}
\end{equation}
for $\phi \in A$. In addition, because of
\[
|I_n(\phi)| = \left| 2\frac{w_{n-2}}{w_{n-1}}  \int_{0}^{1} (1-x^2)^{\frac{n}{2} -1}   \cos (2\pi  \rho r_n x) dx \right| \le 2\frac{w_{n-2}}{w_{n-1}} \int_{0}^{1} (1-x^2)^{\frac{n}{2}-1} dx =1 ,
\]
$I_n(\phi)$ is uniformly bounded. We are now in a position to show $\{ M^n_\rho f \}$ is a Cauchy sequence in the norm topology of $L^2(D^{'}[0,T])$. 
Let $m, ~n ~(m>n)$ be integers.  Because $I_n(\phi)$ converges to $e^{-2\pi^2 \rho^2 T^{-1} \langle \phi \rangle_T}$ for all $\phi \in A$ as $n\to \I$
and $I_n(\phi) \le 1$, we have
\[
\lim_{m,n \to \I} \|M^n_\rho f - M^m_\rho f \|^2 = \lim_{m,n \to \I} \int_{A} | I_m(\phi) - I_n(\phi) |^2 |\hat{f}|^2(d\phi) =0 ,
\]
which proves the proposition.
\end{proof}

The relation between the spherical mean and the L\'{e}vy Laplacian  is stated as follows.

\begin{Cor}
Assume that $f \in D(\Delta_L)$. Then we have
\[
\Delta_L f = 2 \lim_{\rho \to +0} \frac{M_\rho f -f}{\rho^2} .
\]
\end{Cor}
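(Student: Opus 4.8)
The plan is to transport the identity to the Fourier side, where $\Delta_L f$ and $M_\rho f$ have both already been identified as multiplication operators, and then to verify the required convergence by dominated convergence against the measure $|\hat f|^2$.

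First I would observe that the hypothesis $f\in D(\Delta_L)$ supplies exactly what Theorem \ref{LL} and Proposition \ref{asm} need: $f$ is $H_0^1[0,T]$-shift continuous, twice differentiable in every direction of $H_0^1[0,T]$, $|\hat f|^2(D'[0,T]\setminus A)=0$, and $\int_A\langle\phi\rangle_T^2\,|\hat f|^2(d\phi)<\infty$. Hence both $\Delta_L f$ and the asymptotic spherical means $M_\rho f$ exist, and
\[
\mathcal{F}(\Delta_L f)=-(2\pi)^2\,\frac{\langle\cdot\rangle_T}{T}\times\hat f,\qquad
\mathcal{F}(M_\rho f)=e^{-2\pi^2\rho^2 T^{-1}\langle\cdot\rangle_T}\times\hat f .
\]
Since $\mathcal{F}$ is linear and norm-preserving and since the multiplication operator is linear in its function argument, applying $\mathcal{F}$ to $(M_\rho f-f)/\rho^2-\tfrac12\Delta_L f$ gives
\[
\Bigl(\tfrac{1}{\rho^2}\bigl(e^{-2\pi^2\rho^2 T^{-1}\langle\cdot\rangle_T}-1\bigr)+2\pi^2\tfrac{\langle\cdot\rangle_T}{T}\Bigr)\times\hat f,
\]
so it suffices to show that the norm of this element tends to $0$ as $\rho\to+0$.

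By the Schwarz inequality \eqref{Sch 2} (the one used to define the multiplication operator), the square of that norm is bounded by
\[
\int_A\Bigl|\tfrac{1}{\rho^2}\bigl(e^{-a(\phi)}-1\bigr)+2\pi^2\tfrac{\langle\phi\rangle_T}{T}\Bigr|^2|\hat f|^2(d\phi),\qquad a(\phi):=2\pi^2\rho^2T^{-1}\langle\phi\rangle_T\ge0 .
\]
For fixed $\phi\in A$ the integrand tends to $0$ as $\rho\to+0$, because $a(\phi)\to0$ and $\rho^{-2}(e^{-a}-1)=\frac{e^{-a}-1}{a}\cdot\frac{a}{\rho^2}\to-2\pi^2T^{-1}\langle\phi\rangle_T$. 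Using $|e^{-a}-1|\le a$ for $a\ge0$ we get $|\rho^{-2}(e^{-a(\phi)}-1)|\le 2\pi^2T^{-1}\langle\phi\rangle_T$, so the integrand is dominated, uniformly in $\rho$, by $16\pi^4T^{-2}\langle\phi\rangle_T^2$, which lies in $L^1(A,|\hat f|^2)$ precisely because $f\in D(\Delta_L)$. The dominated convergence theorem then finishes the proof, and rearranging gives $\Delta_L f=2\lim_{\rho\to+0}(M_\rho f-f)/\rho^2$.

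The only step that requires any care is the passage of the limit inside the integral; the point is to use the elementary bound $|e^{-a}-1|\le a$ to produce a dominating function independent of $\rho$, so that the integrability condition built into the definition of $D(\Delta_L)$ can be invoked. Everything else is a routine combination of the unitarity of $\mathcal{F}$, the linearity of the multiplication operator, and the explicit Fourier formulas from Theorem \ref{LL} and Proposition \ref{asm}.
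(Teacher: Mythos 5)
Your proposal is correct and follows essentially the same route as the paper: both pass to the Fourier side via Theorem \ref{LL} and Proposition \ref{asm}, express $\Delta_L f - 2\rho^{-2}(M_\rho f - f)$ as a multiplication operator applied to $\hat f$, and reduce the claim to the vanishing of $\int_A \bigl|\tfrac{\langle\phi\rangle_T}{T} + \tfrac{e^{-2\pi^2\rho^2 T^{-1}\langle\phi\rangle_T}-1}{2\pi^2\rho^2}\bigr|^2 |\hat f|^2(d\phi)$ as $\rho\to+0$. The only difference is that you spell out the passage to the limit (dominated convergence with the bound $|e^{-a}-1|\le a$ and the dominating function $16\pi^4 T^{-2}\langle\phi\rangle_T^2$, integrable by the definition of $D(\Delta_L)$), a step the paper leaves implicit.
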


\begin{proof}
Let $f \in D(\Delta_L)$. Theorem \ref{LL} and Proposition \ref{asm} shows that
\begin{align*}
\left\| \Delta_L f - 2  \frac{M_\rho f -f}{\rho^2} \right\|^2 = & (2\pi)^2 \int_A \left| \frac{\la \phi \ra_T}{T} +
\frac{e^{-2\pi^2 \rho^2 T^{-1} \la \phi \ra_T} -1}{2\pi^2 \rho^2} \right|^2 |\Hat{f}|^2(d\phi).
\end{align*}
Letting $\rho \to 0$, we have the desired conclusion.
\end{proof}


\end{document}